\newcommand\da{\!\downarrow\!}
\newcommand\la{\leftarrow}
\newcommand\id{\mathrm{id}}
\newcommand\ten{\otimes}
\newcommand\hten{\hat{\otimes}}
\newcommand\vareps{\varepsilon}
\newcommand\eps{\epsilon}
\newcommand\CC{\mathrm{C}}
\newcommand\Ru{\mathrm{R_u}}
\newcommand\Th{\mathrm{Th}\,}
\newcommand\CCC{\mathrm{CC}}
\renewcommand\H{\mathrm{H}}
\newcommand\z{\mathrm{Z}}
\newcommand\HH{\mathrm{HH}}
\newcommand\N{\mathbb{N}}
\newcommand\Z{\mathbb{Z}}
\newcommand\Q{\mathbb{Q}}
\newcommand\R{\mathbb{R}}
\newcommand\Cx{\mathbb{C}}
\newcommand\bA{\mathbb{A}}
\newcommand\bC{\mathbb{C}}
\newcommand\bG{\mathbb{G}}
\newcommand\bK{\mathbb{K}}
\newcommand\C{\mathcal{C}}
\newcommand\cA{\mathcal{A}}
\newcommand\cB{\mathcal{B}}
\newcommand\cD{\mathcal{D}}
\newcommand\cE{\mathcal{E}}
\newcommand\cH{\mathcal{H}}
\newcommand\cM{\mathcal{M}}
\newcommand\cN{\mathcal{N}}
\newcommand\cW{\mathcal{W}}
\newcommand\cZ{\mathcal{Z}}
\newcommand\sC{\mathscr{C}}
\newcommand\sE{\mathscr{E}}
\newcommand\sF{\mathscr{F}}
\newcommand\sG{\mathscr{G}}
\newcommand\sO{\mathscr{O}}
\newcommand\Y{\mathfrak{Y}}
\newcommand\Ho{\mathrm{Ho}}
\newcommand\Mod{\mathrm{Mod}}
\newcommand\Hom{\mathrm{Hom}}
\newcommand\HHom{\underline{\mathrm{Hom}}}
\newcommand\Ext{\mathrm{Ext}}
\newcommand\End{\mathrm{End}}
\newcommand\cone{\mathrm{cone}}
\newcommand\cocone{\mathrm{cocone}}
\newcommand\dg{\mathrm{dg}}
\newcommand\per{\mathrm{per}}
\newcommand\Ob{\mathrm{Ob}}
\newcommand\Ch{\mathrm{Ch}}
\newcommand\Co{\mathrm{Co}}
\newcommand\cpt{\mathrm{cpt}}
\newcommand\Spec{\mathrm{Spec}\,}
\newcommand\Cat{\mathrm{Cat}}
\newcommand\FD{\mathrm{FD}}
\newcommand\<{\langle}
\renewcommand\>{\rangle}
\newcommand\LLim{\varinjlim}
\DeclareMathOperator*{\holim}{holim}
\newcommand\into{\hookrightarrow}
\newcommand\onto{\twoheadrightarrow}
\newcommand\xra{\xrightarrow}
\newcommand\xla{\xleftarrow}
\newcommand\bt{\bullet}
\newcommand\by{\times}
\newcommand\Vect{\mathrm{Vect}}
\newcommand\Rep{\mathrm{Rep}}
\newcommand\CH{\mathrm{CH}}
\newcommand\et{\acute{\mathrm{e}}\mathrm{t}}
\newcommand\Nori{\mathrm{Nori}}
\newcommand\mot{\mathrm{mot}}
\newcommand\Tot{\mathrm{Tot}\,}
\newcommand\ind{\mathrm{ind}}
\newcommand\pd{\partial}
\newcommand\Sm{\mathrm{Sm}}
\newcommand\MHS{\mathrm{MHS}}
\newcommand\gr{\mathrm{gr}}
\newcommand\op{\mathrm{opp}}
\newcommand\eff{\mathrm{eff}}
\newcommand\co{\colon\thinspace}
\newcommand\oR{\mathbf{R}}
\newcommand\oL{\mathbf{L}}
\newcommand\uleft\underleftarrow
\newcommand\uline\underline
\newcommand\uright\underrightarrow
\newcommand\SmQP{\mathrm{SmQP}}
\newcommand\MT{\mathrm{MT}}
\newcommand\PMT{\mathrm{PMT}}
\newcommand\HS{\mathrm{HS}}
\newtheorem{theorem}{Theorem}[section]
\newtheorem{proposition}[theorem]{Proposition}
\newtheorem{corollary}[theorem]{Corollary}
\newtheorem{lemma}[theorem]{Lemma}
\newtheorem*{theorem*}{Theorem}
\newtheorem*{proposition*}{Proposition}
\newtheorem*{corollary*}{Corollary}
\newtheorem*{lemma*}{Lemma}
\newtheorem*{conjecture*}{Conjecture}
\theoremstyle{definition}
\newtheorem{definition}[theorem]{Definition}
\newtheorem*{definition*}{Definition}
\theoremstyle{remark}
\newtheorem{example}[theorem]{Example}
\newtheorem{remark}[theorem]{Remark}
\newtheorem*{example*}{Example}
\newtheorem*{examples*}{Examples}
\newtheorem*{remark*}{Remark}
\newtheorem*{remarks*}{Remarks}
\newtheorem*{exercise*}{Exercise}
\newtheorem*{property*}{Property}
\newtheorem*{properties*}{Properties}
\begin{document}

\begin{abstract}
 We develop Tannaka duality theory for dg categories. To any dg functor from a dg category $\cA$ to finite-dimensional complexes, we associate a dg coalgebra $C$ via a Hochschild homology construction. When the dg functor is faithful, this gives a quasi-equivalence between the derived dg categories of $\cA$-modules and of $C$-comodules. When $\cA$ is Morita  fibrant (i.e. an idempotent-complete pre-triangulated  category), it is thus quasi-equivalent to the derived dg category of compact $C$-comodules.  We give several applications for motivic Galois groups.
\end{abstract}

\title{Tannaka duality for enhanced triangulated categories I: reconstruction}
\author{J.P.Pridham}
\thanks{This work was supported by  the Engineering and Physical Sciences Research Council [grant numbers EP/I004130/1 and EP/I004130/2].}

\maketitle
\section*{Introduction}

Tannaka duality in Joyal and Street's formulation (\cite[\S 7, Theorem 3]{JoyalStreet}) characterises abelian $k$-linear categories $\cA$ with exact faithful $k$-linear functors $\omega$ to finite-dimensional $k$-vector spaces as categories of finite-dimensional comodules of coalgebras $C$. When $\cA$ is a rigid tensor category and $\omega$ monoidal, $C$ becomes a Hopf algebra (so $\Spec C$ is a group scheme), giving the duality theorem of \cite[Ch. II]{tannaka}.

The purpose of this paper is to extend these duality theorems to dg categories. Various derived versions of Tannaka duality have already been established, notably \cite{toentannaka,wallbridgeTannaka,FukuyamaIwanari, lurieDAG8h,iwanariTannakization}. However, those works usually require the presence of $t$-structures, and  all follow \cite[Ch. II]{tannaka} in restricting attention to  monoidal derived categories, 
  then take higher stacks as the derived generalisation of group schemes.

Our viewpoint does not require the dg categories to have monoidal structures, and takes dg coalgebras as the dual objects. Arbitrary dg coalgebras are poorly behaved (for instance, quasi-isomorphism does not imply Morita equivalence), but they perfectly capture the behaviour of arbitrary dg categories without $t$-structures.  Even in the presence of monoidal structures, we consider more general dg categories than heretofore, and our dg coalgebras then become dg bialgebras, in which case our results give dg enhancements and strengthenings of Ayoub's weak Tannaka duality from \cite{ayoubGaloisMotivic1}. A similar strengthening has appeared in  \cite{iwanariTannakization}, but  without the full description needed for applications to motives (see Remarks \ref{cfayoub0} and \ref{motayoub1}).  

The first crucial observation we make is that in the Joyal--Street setting, the dual coalgebra $C$ to $\omega\co \cA \to \FD\Vect$ is given by the Hochschild homology group
\[
 \omega^{\vee}\ten_{\cA}\omega = \HH_0(\cA, \omega^{\vee}\ten_k \omega),
\]
where $\omega^{\vee}\co \cA^{\op} \to \FD\Vect$ sends $X$ to the dual $\omega(X)^{\vee}$. 
The natural generalisation of the dual coalgebra to dg categories is then clear: given a $k$-linear dg category $\cA$ and a $k$-linear dg functor $\omega$ to finite-dimensional complexes, we put a dg coalgebra structure $C$ on the Hochschild homology complex
\[
 \omega^{\vee}\ten_{\cA}^{\oL}\omega\simeq \CCC_{\bt}(\cA, \omega^{\vee}\ten_k \omega).
\]

In order to understand the correct generalisation of the dg fibre functor $\omega$, we look to Morita (or Morita--Takeuchi) theory. In the underived setting, if $\omega$ is representable by an object $G\in \cA$, the condition that $\omega$ be exact and faithful amounts to requiring that $G$ be a projective generator for $\cA$. This means that in the dg setting, $\HHom(G,-)$ should be a dg fibre functor if and only if $G$ is a derived generator. In other words,  $\HHom(G,-)$ must reflect acyclicity of complexes, so we consider dg functors $\omega$ from $\cA$ to finite-dimensional complexes which are faithful in the sense  that $\omega(X)$ is acyclic only if $X$ is acyclic, for $X$ in the derived category $\cD(\cA)$. 

  Corollary \ref{tannakacor} gives a derived analogue of \cite[\S 7, Theorem 3]{JoyalStreet}. 
When $\omega$ is faithful, this gives a quasi-equivalence between the dg enhancements $\cD_{\dg}(\cA)$ and $\cD_{\dg}(C)$ of  the derived categories (of the first kind) of $\cA$ and $C$. This comparison holds for all dg categories; in particular, replacing $\cA$ with any subcategory of compact generators of $\cD_{\dg}(\cA)$ will yield a dg coalgebra $C$ with the same property. 
Our derived analogue of an abelian category is a Morita fibrant dg category: 
 when $\cA$ is such a dg  category, we have a quasi-equivalence between $\cA$ and the full dg subcategory of $\cD_{\dg}(C)$ on compact objects.

Crucially, Theorem \ref{tannakathm} gives a further generalisation to non-faithful dg functors $\omega$, showing that the    dg derived category $\cD_{\dg}(C)$ of $C$-comodules is quasi-equivalent to a derived quotient  $\cD_{\dg}(\cA)/(\ker\omega)$ of the 
dg derived category $\cD_{\dg}(\cA)$   generated by $\cA$. This has many useful applications to scenarios where $\cA$ arises as a quotient of a much simpler dg category $\cB$, allowing us to compute $C$ directly from $\cB$ and $\omega$.

Section \ref{HHsn} contains the key constructions used throughout the paper. After recalling the Hochschild homology complex $\CCC_{\bt}(\cA,F)$  of a dg category $\cA$ with coefficients in a $\cA$-bimodule $F$, we 
study the dg coalgebra $C_{\omega}(\cA):=\CCC_{\bt}(\cA, \omega^{\vee}\ten_k \omega)$. 

We then introduce the notion of   universal coalgebras of $\cA$, which are certain resolutions $D$ of $\cA(-,-)$ as a $\ten_{\cA}$-coalgebra in $\cA$-bimodules. The canonical choice is the Hochschild complex $\CCC_{\bt}(\cA, h_{\cA^{\op}}\ten_k h_{\cA})$ of the Yoneda embedding. For any universal coalgebra $D$, a dg fibre functor $\omega$ gives a dg coalgebra $C:= \omega^{\vee}\ten_{\cA}D\ten_{\cA}\omega$, and a tilting module $P:=D\ten_{\cA}\omega $. When $(\cA, \boxtimes)$ is a tensor dg category, we consider universal bialgebras, which are universal  coalgebras equipped with compatible multiplication with respect to $\boxtimes$, the Hochschild complex again being one such. In this case, a tensor dg functor $\omega$ makes $C$ into a dg bialgebra.
 
The main results of the paper are in Section \ref{comodsn}. For $C$ and $P$ a dg coalgebra and tilting module as above, there is a left Quillen dg functor $-\ten_{\cA}P$ from the category of dg $\cA$-modules to the category of dg $C$-comodules  (Lemma \ref{Qlemma}). The functors $\cD(C) \to \cD(\cA) \to \cD(C)$ then form a retraction (Proposition \ref{retractprop}). Theorem \ref{tannakathm}  establishes quasi-equivalences
\[
 \cD_{\dg}(C) \to  (\ker\omega)^{\perp}\to \cD_{\dg}(\cA)/(\ker \omega)
\]
of dg enhancements of derived categories, which simplify to the equivalences of Corollary \ref{tannakacor} when $\omega$ is faithful. Remark \ref{cfayoub0} relates this to Ayoub's weak Tannaka duality, with various consequences for describing motivic Galois groups  given  in \S \ref{mothomsn}. Proposition \ref{monoidalprop} ensures that the equivalences preserve tensor structures when present, and Example \ref{mothomb} applies this to motivic Galois groups. 

In the appendix, we give technical details for constructing  monoidal dg functors giving rise to the motivic Galois groups of Example \ref{mothomb}, and show that, in the case of  Betti cohomology, this construction gives a dg functor non-canonically quasi-isomorphic to the usual cohomology dg functor (Corollary \ref{bettiformalcor}).

The main drawback of the Hochschild construction for the dg coalgebra is that it always creates terms in negative cochain degrees. This means that quasi-isomorphisms of such dg coalgebras might not be derived Morita equivalences, and that we cannot rule out negative homotopy groups for dg categories of cohomological origin. This issue is addressed in the sequel \cite{HHtannaka2}, by relating vanishing of the dg coalgebra in negative degrees to the existence of a $t$-structure.  

I would like to thank Joseph Ayoub for providing helpful comments and spotting careless errors.


\subsection*{Notational conventions}

 Fix a commutative ring $k$.
 When the base is not specified, $\ten$ will mean $\ten_k$. When $k$ is a field, we write $\Vect_k$ for the category of all vector spaces over $k$, and $\FD\Vect_k$ for the full subcategory of finite-dimensional vector spaces.

We will always use the symbol $\cong$ to denote isomorphism, while $\simeq$ will be equivalence,  quasi-isomorphism or quasi-equivalence.

\tableofcontents

\section{Hochschild homology of a DG category}\label{HHsn}

\begin{definition}
 A $k$-linear dg category $\C$ is a category enriched in cochain complexes of $k$-modules, so has objects $\Ob \C$, cochain complexes $\HHom_{\C}(x,y)$ of morphisms, associative multiplication
\[
 \HHom_{\C}(y,z)\ten_k\HHom_{\C}(x,y)\to \HHom_{\C}(x,z)
\]
and identities $\id_x \in \HHom_{\C}(x,x)^0$.
\end{definition}

Given a dg category $\C$, we will write $\z^0\C$ and $\H^0\C$ for the categories with the same objects as $\C$ and with morphisms
\begin{align*}
 \Hom_{\z^0\C}(x,y) &:=\z^0\HHom_{\C}(x,y),\\
\Hom_{\H^0\C}(x,y) &:=\H^0\HHom_{\C}(x,y).
\end{align*}
 When we refer to limits or colimits in a dg category $\C$, we will mean limits or colimits in the underlying category $\z^0\C$.

\begin{definition}
Given a dg category $\C$ and objects $x,y$, write $\C(x,y):= \HHom_{\C}(y,x)$.
\end{definition}

\begin{definition}
 A dg functor $F\co \cA \to \cB$ is said to be a quasi-equivalence if $\H^0F\co \H^0\cA \to \H^0\cB$ is an equivalence of categories, with $\cA(X,Y) \to \cB(FX,FY)$ a quasi-isomorphism for all objects $X,Y \in \cA$.
\end{definition}

\begin{definition}\label{perkdef}
We follow \cite[2.2]{kellerModelDGCat} in writing $\C_{\dg}(k)$ for the dg category of chain complexes over $k$, where $\HHom(U,V)^i$ consists of graded $k$-linear morphisms $U \to V[i]$, and the differential is given by $df= d \circ f \mp f \circ d$. 

We write
$\per_{\dg}(k)$ for the full dg subcategory of 
finite rank  cochain complexes of projective $k$-modules. Beware that this category is not closed under quasi-isomorphisms, so does not include all perfect complexes in the usual sense. 
\end{definition}

The following is adapted from \cite[\S 12]{mitchellRings} and \cite[1.3]{kellerHCexact}:

\begin{definition}
Take a small $k$-linear dg category $\cA$ and an $\cA$-bimodule
\[
 F\co  \cA\by \cA^{\op}\to \C_{\dg}(k),       
\]
(i.e. a $k$-bilinear dg functor). Define the homological Hochschild complex
\[
 \uline{\CCC}_{\bt}(\cA, F)      
\]
(a simplicial diagram of cochain complexes) by
\[
 \uline{\CCC}_n(\cA,F):= \bigoplus_{X_0, \ldots, X_n \in \Ob \cA }  \cA(X_0,X_1)\ten_k\cA(X_1,X_2) \ten_k \ldots \ten_k\cA(X_{n-1},X_n) \ten_kF(X_n,X_0),   
\]
with face maps
\[
 \pd_i(a_1\ten\ldots a_n \ten f)= \left\{ \begin{matrix}   a_2\ten\ldots a_n \ten (f \circ a_1) & i=0 \\  a_1\ten\ldots a_{i-1} \ten (a_i \circ a_{i+1}) \ten a_{i+2} \ten \ldots\ten a_n \ten f & 0<i <n \\
  a_1\ten\ldots a_{n-1} \ten (a_n \circ f) & i=n                                              
                                          \end{matrix}\right.
 \]
and degeneracies
\[
 \sigma_i(a_1\ten\ldots a_n \ten f)= (a_1\ten \ldots \ten a_i \ten \id \ten a_{i+1} \ten \ldots \ten a_n \ten f).
 \]
\end{definition}

\begin{definition}


Define the total  Hochschild complex
\[
 \CCC(\cA,F)
\]
by first regarding $\uline{\CCC}_{\bt}(\cA, F)$ as a chain cochain complex with chain differential $\sum_i (-1)^i\pd_i$, then taking the total complex
\[
( \Tot \uline{\CCC}_{\bt}(\cA, F)^{\bt})^n = \bigoplus_i \uline{\CCC}_{i}(\cA, F)^{n+i},
\]
 with differential given by the cochain differential $\pm$ the chain differential. 

There is also a quasi-isomorphic normalised version 
\[
 N\CCC(\cA,F),
\]
given by replacing $\uline{\CCC}_{i}$ with $\uline{\CCC}_{i}/\sum_j \sigma_j \uline{\CCC}_{i-1}$.
\end{definition}

\begin{remark}
Note that $\H^i \CCC(\cA,F)^{\bt}= \HH_{-i}(\cA,F)$,  which is a Hochschild \emph{homology} group. We have, however, chosen cohomological gradings because our motivating  examples will all have $\H^{<0}=0$.
\end{remark}

\subsection{The Tannakian envelope}

Fix a small $k$-linear dg category $\cA$ and a $k$-linear dg  functor $\omega \co \cA \to \per_{\dg}(k)$.

\begin{remark}\label{hfdrmk}
If $k$ is a field and we instead have a dg functor $\omega\co  \cA \to h\FD\Ch_k$ to the dg category of cohomologically finite-dimensional complexes (i.e. perfect complexes in the usual sense), we can reduce to the setting above. We could first take a cofibrant replacement $\tilde{\cA} \to \cA$ of $\cA$ in Tabuada's model structure (\cite{tabuadaMCdgcat}, as adapted in \cite[Theorem 4.1]{kellerModelDGCat}) on dg categories. Because $k$ is a field,  the inclusion $\per_{\dg}(k) \to h\FD\Ch_k$ is a quasi-equivalence, so the composite dg functor $\omega \co \tilde{\cA} \to h\FD\Ch_k$ is homotopy equivalent to a dg functor $\omega' \co \tilde{\cA}\to \per_{\dg}(k)$.
\end{remark}

\begin{definition}\label{Comegadef}
 Define the Tannakian dual $C_{\omega}( \cA)$ by 
\[
 C_{\omega}( \cA):= \CCC(\cA,\omega \ten \omega^{\vee}),
\]
where the $\cA$-bimodule
\[
 \omega \ten \omega^{\vee} \co \cA \by \cA^{\op} \to \per_{\dg}(k) 
\]
is given 
by
\[
 \omega \ten \omega^{\vee} (x,y)= (\omega x)\ten_k (\omega y)^{\vee}.
\]
Similarly, write $NC_{\omega}( \cA):= N\CCC(\cA,\omega \ten \omega^{\vee})$.
\end{definition}

\begin{proposition}\label{coalg}
 The cochain complexes $C_{\omega}( \cA), NC_{\omega}( \cA) $ 
have the natural structure of  coassociative counital dg coalgebras over $k$.
\end{proposition}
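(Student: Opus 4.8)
The plan is to write down explicit formulas for the comultiplication $\Delta$ and counit $\vareps$ on the (normalised) Hochschild complex and then verify directly that they are chain maps obeying the coalgebra axioms. The only structural input required is that each $\omega(X)$ is a dualisable object of $\per_{\dg}(k)$: being a finite rank complex of projective $k$-modules, it comes equipped with evaluation $\ev_X\co (\omega X)^{\vee}\ten_k\omega X\to k$ and coevaluation $\mathrm{coev}_X\co k\to \omega X\ten_k(\omega X)^{\vee}$, which are closed morphisms of complexes of degree $0$ satisfying the triangle identities. On $\HH_0$ these recover the familiar coend coalgebra $\int^X\omega X\ten(\omega X)^{\vee}$, whose comultiplication inserts $\mathrm{coev}$ and whose counit is $\ev$; the content of the proposition is that this structure lifts to the whole complex.

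I define $\vareps\co C_{\omega}(\cA)\to k$ to vanish outside the summand $\uline{\CCC}_0(\cA,\omega\ten\omega^{\vee})=\bigoplus_X\omega X\ten(\omega X)^{\vee}$ in total degree $0$, where it is $\sum_X\ev_X$. For $\Delta$ I view $C_{\omega}(\cA)$ as a cyclic-bar complex and use deconcatenation with insertion of a coevaluation at the splitting object: on the summand indexed by $X_0,\dots,X_n$, writing the coefficient $f\in F(X_n,X_0)=\omega X_n\ten(\omega X_0)^{\vee}$ as $f=v\ten\varphi$ and choosing dual bases $\{e_\alpha\},\{e^\alpha\}$ with $\mathrm{coev}_{X_p}=\sum_\alpha e_\alpha\ten e^\alpha$,
\[
 \Delta(a_1\ten\cdots\ten a_n\ten v\ten\varphi)=\sum_{p=0}^{n}\sum_{\alpha}\bigl(a_1\ten\cdots\ten a_p\ten e_\alpha\ten\varphi\bigr)\ten\bigl(a_{p+1}\ten\cdots\ten a_n\ten v\ten e^\alpha\bigr),
\]
where the left factor lies in $\uline{\CCC}_p$ (indexed by $X_0,\dots,X_p$) and the right in $\uline{\CCC}_{n-p}$ (indexed by $X_p,\dots,X_n$). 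Each summand uses the coevaluation of a single $\omega X_p$, so the inner sum is finite, $\Delta$ is defined summand-wise independently of the (possibly infinite) set $\Ob\cA$, and it is manifestly basis-independent.

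Both maps must be checked against the total differential, which combines the internal cochain differential with the chain differential $\sum_i(-1)^i\pd_i$. Compatibility with the internal differential is immediate, as $\ev_X,\mathrm{coev}_X$ are closed of degree $0$ and $\Delta$ is built from identities and coevaluations (with the standard Koszul signs for a tensor product of complexes). For $\vareps$, chain-compatibility reduces to $\ev_X\circ(\pd_0-\pd_1)=0$ on $\uline{\CCC}_1$, which is cyclicity of the trace: both $\pd_0$ and $\pd_1$ send $a\ten v\ten\varphi$ to expressions evaluating to $\varphi(\omega(a)v)$. The main obstacle is showing $\Delta$ is a chain map, i.e. $\Delta\circ d=(d\ten\id\pm\id\ten d)\circ\Delta$ for the chain differential. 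This is the usual Alexander--Whitney/deconcatenation bookkeeping: every face $\pd_j$ acting in the interior of the left (resp. right) block matches a bulk term of $d\ten\id$ (resp. $\id\ten d$), and the only remaining contributions are the ``seam'' terms, where the top face of the left block caps $a_p$ against the inserted $e_\alpha$, or the bottom face of the right block caps $e^\alpha$ against $a_{p+1}$. The seam term from splitting at $p$ (top face, left) equals that from splitting at $p-1$ (bottom face, right), since by naturality of the coevaluation $\sum_\alpha\omega(a_p)(e_\alpha)\ten e^\alpha=\sum_\beta e_\beta\ten(e^\beta\circ\omega(a_p))$, both expressing the element of $\omega X_{p-1}\ten(\omega X_p)^{\vee}$ corresponding to $\omega(a_p)$. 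These carry opposite signs and cancel in consecutive pairs for $p=1,\dots,n$, with no boundary terms left over, so $\Delta$ is a chain map.

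It remains to record the coalgebra axioms. Coassociativity holds because $(\Delta\ten\id)\Delta$ and $(\id\ten\Delta)\Delta$ both split a word into three blocks with two independent coevaluation insertions, and deconcatenation is strictly coassociative. Counitality follows from the triangle identities: in $(\vareps\ten\id)\Delta$ only the $p=0$ term survives and $\sum_\alpha\varphi(e_\alpha)\,e^\alpha=\varphi$ restores the original element, while $(\id\ten\vareps)\Delta=\id$ uses the $p=n$ term and $\sum_\alpha e^\alpha(v)\,e_\alpha=v$. Finally the same formulas pass to the normalised complex $NC_{\omega}(\cA)$: deconcatenating a word with some $a_i=\id$ gives, in each split, a pair in which at least one factor still contains $\id$ (for the splits at $p=i-1$ and $p=i$ the identity sits at the start of the right, resp. the end of the left, factor), so the degenerate subcomplex is a coideal, and $\vareps$ descends trivially since it is supported in simplicial degree $0$.
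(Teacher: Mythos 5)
Your proof is correct and follows essentially the same route as the paper's: your comultiplication (deconcatenation with the dual-basis coevaluation $\sum_\alpha e_\alpha\ten e^\alpha$ inserted at the splitting object) is precisely the paper's insertion of $\id_{X_m}\in(\omega X_m)\ten(\omega X_m)^{\vee}$, and your seam-cancellation argument for the chain-map property is the same computation the paper carries out with the face maps $\pd_i$. The extra verifications you include (counit, coassociativity, and the coideal property of degenerate words for the normalised complex) are the routine checks the paper leaves implicit.
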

\begin{proof}
We may rewrite
\begin{align*}
 & \uline{\CCC}_n(\cA,  {\omega}\ten\omega^{\vee})=\\
  & \bigoplus_{X_0, \ldots, X_n \in \Ob \cA }  \cA(X_0,X_1)\ten\cA(X_1,X_2) \ten \ldots \ten\cA(X_{n-1},X_n) \ten \omega X_n \ten \omega (X_0)^{\vee},  
\end{align*}

as
\[   \bigoplus_{X_0, \ldots, X_n \in \Ob \cA }  \omega (X_0)^{\vee} \ten\cA(X_0,X_1)\ten\cA(X_1,X_2) \ten \ldots \ten\cA(X_{n-1},X_n) \ten \omega X_n.     
\]

There is a comultiplication $\Delta$ on the bicomplex
\[
 \uline{\CCC}_{\bt}(\cA,\omega \ten \omega^{\vee}),
\]
with
\begin{align*}
 &  \Delta \co \uline{\CCC}_{m+n}(\cA,\omega \ten \omega^{\vee})\to \\
& \uline{\CCC}_m(\cA,\omega \ten \omega^{\vee})\ten_k\uline{\CCC}_n(\cA,\omega \ten \omega^{\vee})
\end{align*}
being  the map
\begin{align*}
 & ({\omega} X_0)^{\vee} \ten \C(X_0, X_{1}) \ten \ldots \ten \C(X_{m+n-1}, X_{m+n})\ten ({\omega} X_{m+n}) \to\\
&[ ({\omega} X_0)^{\vee} \ten \C(X_0, X_1) \ten \ldots \ten \C(X_{m-1}, X_m)\ten ({\omega} X_m)]\\
& \ten [({\omega} X_m)^{\vee} \ten \C(X_m, X_{m+1}) \ten \ldots \ten \C(X_{m+n-1}, X_{m+n})\ten ({\omega} X_{m+n})]
\end{align*}
 given by tensoring with
\[
 \id_X \in ({\omega} X_m) \ten ({\omega} X_m)^{\vee}.
\]

Now, 
\begin{align*}
&  (\pd_i\ten \id) \circ \Delta_{m+1,n} (x \ten c_1\ten \ldots \ten c_{m+n+1} \ten y)\\
 &=\left\{
\begin{matrix}
\Delta_{m,n} \pd_i  (x \ten c_1\ten \ldots \ten c_{m+n+1} \ten y)  & i\le m, \\
  x \ten c_1\ten \ldots \ten c_{m}\ten (\omega c_{m+1})\ten c_{m+2}\ten \dots \ten c_{m+n+1} \ten y &i=m+1;                                                            
\end{matrix}\right.\\
\end{align*}
\begin{align*}
 &(\id \ten \pd_i)\circ  \Delta_{m,n+1} (x \ten c_1\ten \ldots \ten c_{m+n+1} \ten y)\\
&= \left\{
\begin{matrix}
\Delta_{m,n} \pd_{i+m}  (x \ten c_1\ten \ldots \ten c_{m+n} \ten y)  & i>0, \\
  x \ten c_1\ten \ldots \ten c_{m}\ten (\omega c_{m+1})\ten c_{m+2}\ten \dots \ten c_{m+n+1} \ten y &i=0.                                                             
\end{matrix}\right.
\end{align*}
Thus the differential $d=\sum (-1)^i \pd_i$ has the property that
\[
[ (d\ten\id +(-1)^m\id \ten d) \circ \Delta]_{m,n}= \Delta_{m,n} \circ d.
\]

In other words, $\Delta$ is a chain map with respect to $d$, so passes to a comultiplication on $C_{\omega}( \cA) = \Tot \uline{\CCC}_{\bt}(\cA, \omega \ten \omega^{\vee})$. The properties of the $\pd_i$ above also ensure that the comultiplication descends to $NC_{\omega}( \cA)$.
\end{proof}

It is immediately clear that the constructions are functorial in the following sense:
\begin{lemma}\label{coalgfunlemma}
  For any $k$-linear dg functor $F \co \cB \to \cA$, there is an induced morphism
 $ C_{\omega\circ F}( \cB) \to C_{\omega}( \cA)$ of dg coalgebras, which also induces a morphism on the normalisations.
\end{lemma}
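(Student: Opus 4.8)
The plan is to build the map one simplicial degree at a time, then verify compatibility with all the structure. Write $\bar{\omega} := \omega \circ F \co \cB \to \per_{\dg}(k)$, so that $C_{\omega \circ F}(\cB) = \CCC(\cB, \bar{\omega} \ten \bar{\omega}^{\vee})$. For objects $Y_0, \ldots, Y_n \in \Ob \cB$, the dg functor $F$ supplies cochain maps $\cB(Y_i, Y_{i+1}) \to \cA(FY_i, FY_{i+1})$, while on the coefficient factors we have the tautological identities $\bar{\omega} Y_n = \omega(FY_n)$ and $\bar{\omega}(Y_0)^{\vee} = \omega(FY_0)^{\vee}$. Tensoring these maps sends the $(Y_0, \ldots, Y_n)$-summand of $\uline{\CCC}_n(\cB, \bar{\omega} \ten \bar{\omega}^{\vee})$ into the $(FY_0, \ldots, FY_n)$-summand of $\uline{\CCC}_n(\cA, \omega \ten \omega^{\vee})$. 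Summing over all tuples and composing with the inclusions of summands yields a map $f_n \co \uline{\CCC}_n(\cB, \bar{\omega} \ten \bar{\omega}^{\vee}) \to \uline{\CCC}_n(\cA, \omega \ten \omega^{\vee})$ for each $n$.

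First I would check that $f_{\bullet}$ is a morphism of simplicial cochain complexes. Each $f_n$ is a direct sum of tensor products of cochain maps, hence itself a cochain map. Compatibility with the inner face maps $\pd_i$ (for $0 < i < n$) is precisely the statement that $F$ preserves composition of morphisms, while compatibility with $\pd_0$ and $\pd_n$ follows because the bimodule actions invoked there are induced by $\omega$ and $\bar{\omega} = \omega \circ F$ respectively, so they agree under $F$. Compatibility with the degeneracies $\sigma_i$ uses $F(\id_Y) = \id_{FY}$. It follows that $f_{\bullet}$ commutes with the chain differential $\sum_i (-1)^i \pd_i$ as well as with the cochain differential, hence descends to a morphism $C_{\omega \circ F}(\cB) \to C_{\omega}(\cA)$ of total complexes; since it carries degeneracies to degeneracies it also induces $NC_{\omega \circ F}(\cB) \to NC_{\omega}(\cA)$ on the normalisations.

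Next I would confirm that $f$ respects the coalgebra structure of Proposition \ref{coalg}. The comultiplication $\Delta_{m,n}$ splits a string at its middle object by tensoring with $\id \in (\omega X_m) \ten (\omega X_m)^{\vee}$; on the $\cB$-side the split at $Y_m$ inserts $\id \in (\bar{\omega} Y_m) \ten (\bar{\omega} Y_m)^{\vee}$, which under the identity $\bar{\omega} Y_m = \omega(FY_m)$ is exactly the element inserted on the $\cA$-side at $X_m = FY_m$. Hence $(f_m \ten f_n) \circ \Delta = \Delta \circ f_{m+n}$. The same identification shows $f_0$ intertwines the counits, given on $\uline{\CCC}_0$ by the evaluations $(\bar{\omega} Y) \ten (\bar{\omega} Y)^{\vee} \to k$ and $(\omega X) \ten (\omega X)^{\vee} \to k$. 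Thus $f$ is a morphism of dg coalgebras, and the parallel computation on normalised complexes gives the morphism on normalisations.

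The only point demanding care is the bookkeeping of the direct sums: since $F$ need be neither injective nor surjective on objects, several tuples $(Y_{\bullet})$ may share an image $(FY_{\bullet})$, so $f_n$ must be defined as the canonical map on coproducts induced by the summand maps above, rather than as a matching of summands. Once this is phrased correctly, every verification reduces to the functoriality and $k$-linearity of $F$ together with the compatibility $\omega \circ F = \bar{\omega}$, and I expect no substantive difficulty beyond it.
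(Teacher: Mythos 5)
Your proof is correct and is precisely the verification the paper treats as immediate (the lemma is stated after "It is immediately clear that the constructions are functorial", with no written proof): apply $F$ on the Hom-complex factors and use the tautological identification $(\omega\circ F)(Y)=\omega(FY)$ on the coefficient factors, then check compatibility with faces, degeneracies, the comultiplication insertion of $\id\in\omega(FY_m)\ten\omega(FY_m)^{\vee}$, and the counit. Your remark about tuples of objects sharing an image under $F$ is a sensible point of care, and nothing further is needed.
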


In \S \ref{funsn}, we will combine this lemma with  Theorem \ref{tannakathm} to show that $C_{\omega}( \cA)$ is essentially invariant under quasi-equivalent choices of $\cA$ and quasi-isomorphic choices of $\omega$, so that the choice in Remark \ref{hfdrmk} does not affect the output.

\subsection{The universal coalgebra and tilting modules}

\subsubsection{Background terminology}

Following the conventions of \cite[3.1]{kellerModelDGCat}, we will write $\C_{\dg}(\cA)$ for the  dg category  of $k$-linear dg functors $\cA^{\op} \to \C_{\dg}(k)$ to  chain complexes over $k$. Observe that when $\cA$ has a single object $*$ with $\cA(*,*)=A$, $\C_{\dg}(\cA)$ is equivalent to the category of $A$-modules in complexes. We write $\C(\cA)$ for the (non-dg) category $\z^0\C_{\dg}(\cA)$ of dg $\cA$-modules.

An object $P$ of  $\C(\cA)$ is cofibrant  (for the projective model structure) if every surjective quasi-isomorphism $L \to P$ has a section. The full dg subcategory of $\C_{\dg}(\cA)$ on cofibrant objects  is  denoted $\cD_{\dg}(\cA)$. 
This is the idempotent-complete  pre-triangulated category (in the sense of \cite[Definition 3.1]{BondalKapranov}) generated by $\cA$ and closed under filtered colimits. We write  $\cD(\cA)$ for the derived category $\H^0\cD_{\dg}(\cA)$ of dg $\cA$-modules --- this is equivalent to the localisation of $\C(\cA)$ at quasi-isomorphisms. Thus $\cD_{\dg}(\cA)$ is a dg enhancement of the triangulated category $\cD(A)$.

\begin{definition}\label{perdef}
 Define $\per_{\dg}(\cA) \subset \cD_{\dg}(\cA)$ to be the full subcategory on compact objects, i.e those $X$ for which
\[
 \HHom_{\cA}(X,-)
\]
 preserves filtered colimits.
Explicitly, $\per_{\dg}(\cA)$  consists of objects arising as direct summands of finite complexes of objects of the form $h_{X}[n]$, for $X \in \cA$, where $h$ is the Yoneda embedding. 
\end{definition}

When $\cA$ has a single object $*$ with $\cA(*,*)=A$, then $h_{*}[n]$ corresponds to the $A$-module $A[n]$. Since projective modules are direct summands of free modules, Definitions \ref{perdef} and \ref{perkdef} are thus consistent.

As explained in \cite[4.5]{kellerModelDGCat},  $\per_{\dg}(\cA)$ is the idempotent-complete   pre-triangulated envelope or hull of $\cA$, in the sense of \cite[\S 3]{BondalKapranov}. 
Note that in \cite[\S 2]{kellerHCexact}, pre-triangulated categories are called exact DG categories. 

By \cite[Theorem 5.1]{tabuadaInvariantsAdditifs}, there is a Morita model structure on $k$-linear dg categories. Weak equivalences are dg functors $\cA \to \cB$ which are derived Morita equivalences in the sense that
\[
 \cD_{\dg}(\cA) \to \cD_{\dg}(\cB)       
\]
is a quasi-equivalence.
The dg functor $\cA \to \per_{\dg}(\cA)$ is  fibrant replacement in this model structure.

Note that a dg category $\cA$ is an  idempotent-complete pre-triangulated category if and only if the natural embedding $\cA \to \per_{\dg}(\cA)$ is a quasi-equivalence. This is equivalent to saying that $\cA$ is Morita fibrant (i.e. fibrant in the Morita model structure), or triangulated in the terminology of \cite[Definition 2.4]{TVdg}.

\subsubsection{Universal coalgebras}\label{univcoalg}

\begin{definition}\label{monoidaldef}
Recall (e.g. from \cite[Remark 8.5]{toenMorita}) that there is a monoidal structure $\ten_{\cA}$ on the dg category $\C_{\dg}(\cA^{\op}\ten \cA)$, 
 given by 
\[
 (F\ten_{\cA}G)(X,Y)= F(X,-)\ten_{\cA}G(-,Y),      
\]
for $X \in \cA, Y \in \cA^{\op}$. The unit of the monoidal structure is the dg functor $\id_{\cA}$, given by
\[
 \id_{\cA}(X,Y)= \cA(X,Y).       
\]
\end{definition}

Take a $k$-linear dg category $\cA$, and  $D \in \cD_{\dg}(\cA^{\op}\ten \cA)$  a coassociative $\ten_{\cA}$-coalgebra, with the co-unit $D \to \id_{\cA}$
a  quasi-isomorphism. We regard this as being a universal coalgebra associated to $\cA$.

\begin{example}\label{HHcoalgex}
If the $k$-complexes $\cA(X, Y)$ are all cofibrant (automatic when $k$ is a field), a canonical choice for $D$ is the Hochschild complex
\[
   \CCC(\cA,h_{\cA^{\op}} \ten h_{\cA})     
\]
of the Yoneda embedding $h_{\cA^{\op}} \ten h_{\cA}  \co \cA^{\op}\ten \cA\to \C_{\dg}(\cA^{\op}\ten \cA)$. Explicitly, $\CCC(\cA,h_{\cA^{\op}} \ten h_{\cA})$ is the total complex of the chain complex
\[
 \bigoplus_{X_0 \in \cA} X_0^{\op} \ten X_0 \la   \bigoplus_{X_0,X_1 \in \cA} X_0^{\op} \ten \cA(X_0, X_1)\ten X_1    \la \ldots, 
\]
where we write $X$ and  $X^{\op}$ for the images of $X$ under the Yoneda embeddings $\cA\to \C_{\dg}( \cA)$,  $\cA^{\op}\to \C_{\dg}(\cA^{\op})$.

The $\ten_{\cA}$-coalgebra structure is given by the formulae of Proposition \ref{coalg}, noting that 
\[
 Y\ten_{\cA}X^{\op}= \cA(Y,X),       
\]
so $\id_X \in X\ten_{\cA}X^{\op}$.

The normalised version of the Hochschild complex $N\CCC(\cA,h_{\cA^{\op}} \ten h_{\cA})$ provides another choice for $D$, which is more canonical in some respects.

 If we write $L= \bigoplus_{X \in \cA} X^{\op}\ten_k X$, then $L$ is a $\ten_{\cA}$-coalgebra in $\C_{\dg}(\cA^{\op}\ten \cA)$. The counit is just the composition $\bigoplus_X \cA(-,X)\ten_k \cA(X,-) \to  \cA(-,-)$, and comultiplication comes from $\id_X \in  X\ten_{\cA}X^{\op}$. Then $D$ is the total complex of the simplicial diagram given by $ \underbrace{L\ten_{\cA}L \ten_{\cA} \ldots \ten_{\cA}L}_{n+1}$ in level $n$,  so $D$ is just the \v Cech nerve of the $\ten_{\cA}$-comonoid $L$. 
\end{example}

\begin{definition}
Say that a coassociative $\ten_{\cA}$-coalgebra $D \in \cD_{\dg}(\cA^{\op}\ten \cA)$ is ind-compact if it can be expressed as a filtered colimit $D\cong \LLim_i D_i$ in the underlying category  $\z^0\cD_{\dg}(\cA^{\op}\ten \cA)$, with each $D_i$ a coassociative $\ten_{\cA}$-coalgebra which is compact as an object of $\cD_{\dg}(\cA^{\op}\ten \cA)$.
       \end{definition}

Note that if $\cA$ itself is a field, then the fundamental theorem of coalgebras (\cite[Proposition 7.1]{JoyalStreet}) says that all $\ten_{\cA}$-coalgebras are filtered colimits (i.e. nested unions) of finite-dimensional coalgebras, so are ind-compact. 

\begin{example}\label{HHunivcoalg}
If $k$ is a field, then the $\ten_{\cA}$-coalgebra $\CCC(\cA,h_{\cA^{\op}} \ten h_{\cA})$ is ind-compact. We construct the exhaustive system of compact subcoalgebras as follows. The indexing set will consist of triples $(S, n,V)$ with $S$ a finite subset of $\Ob \cA$, $n \in \N_0$ and $V(X,Y) \subset \cA(X,Y)$ a collection of finite-dimensional cochain complexes for $X,Y \in S$.

For $X',Y' \in S$, we now let $V^{(i)}(X',Y')\subset \cA(X',Y')$ be the cochain complex generated by strings of length at most $2^i$ in elements of $V$. 
We now define
$
 D_{(S,n,V)} \subset   D     
$
to be the total complex of
\begin{align*}
 \bigoplus_{X_0 \in S} X_0^{\op} \ten X_0 \la  & \bigoplus_{X_0,X_1 \in S} X_0^{\op} \ten V^{(n-1)}(X_0, X_1)\ten X_1    \\
 \la &\bigoplus_{X_0,X_1,X_2 \in S} X_0^{\op} \ten V^{(n-2)}(X_0, X_1)\ten V^{(n-2)}(X_1, X_2) \ten X_2   \\ 
\la\ldots\la & \bigoplus_{X_0,\ldots X_n \in S} X_0^{\op} \ten V^{(0)}(X_0, X_1)\ten\ldots \ten V^{(0)}(X_{n-1}, X_n) \ten X_n.       
\end{align*}

This is indeed a complex because multiplication in $\cA$ gives boundary maps $V^{(n-i)}(X,Y)\ten V^{(n-i)}(Y,Z)\to V^{(n-i-1)}(X,Z)$, and it is a subcoalgebra because $V^{(n-i-j)} \subset V^{(n-i)}\cap V^{(n-j)}$. The indexing set becomes a poset by saying $(S,m,U)\subset (T,n,V) $ whenever $S \subset T$, $m \le n$ and $U \subset V$. Thus we have a filtered colimit
\[
 D= \LLim_{(S,n,V)}D_{(S,n,V)},       
\]
of the required form.
\end{example}


\subsubsection{Tilting modules}\label{tilt}

Given 
$\omega \co \cA \to \per_{\dg}(k)$, define the tilting module  $P$ by $P:=D\ten_{\cA}\omega \in \C(\cA^{\op})$; this is  cofibrant and has  a natural quasi-isomorphism $P \to \omega$. Also set $Q\in \C(\cA)$ by $Q:=\omega^{\vee}\ten_{\cA}D$ and set $C:= \omega^{\vee}\ten_{\cA}D\ten_{\cA}\omega$. Note that the natural transformation $\id_{\cA} \to \omega\ten_k \omega^{\vee}$ makes $C$ into a dg coalgebra over $k$:
\begin{align*}
 C= \omega^{\vee}\ten_{\cA}D\ten_{\cA}\omega&\to \omega^{\vee}\ten_{\cA}D\ten_{\cA}D\ten_{\cA}\omega\\
&=\omega^{\vee}\ten_{\cA}D\ten_{\cA}\id_{\cA}\ten_{\cA}D\ten_{\cA}\omega\\
&\to \omega^{\vee}\ten_{\cA}D\ten_{\cA}\omega\ten_k \omega^{\vee}\ten_{\cA}D\ten_{\cA}\omega\\
&=C\ten_kC.
\end{align*}
Likewise,  $P$ becomes a right $C$-comodule and $Q$ a left $C$-comodule.

Also note that because $D$ is a cofibrant replacement for $\id_{\cA}$, we have
\[
C\simeq  \omega^{\vee}\ten_{\cA}^{\oL}\id_{\cA}\ten_{\cA}^{\oL}\omega \simeq \omega^{\vee}\ten^{\oL}_{\cA}\omega.
\]

For a chosen exhaustive system $D= \LLim_i D_i$ of an ind-compact coalgebra, we also write $P_i:=D_i\ten_{\cA}\omega$, $C_i:= \omega^{\vee}\ten_{\cA}D_i\ten_{\cA}\omega$ and $ Q_i:=\omega^{\vee}\ten_{\cA}D_i$. Each $C_i$ is a dg coalgebra, with $P_i$ (resp. $Q_i$) a right (resp. left) $C_i$-comodule.

\begin{example}
When $D= \CCC(\cA,h_{\cA^{\op}} \ten h_{\cA})$, observe that 
\begin{align*}
 C &= \CCC(\cA, \omega \ten \omega),\\
P&= \CCC(\cA, h_{\cA} \ten \omega),\\
Q&=\CCC(\cA, \omega^{\vee} \ten h_{\cA^{\op}}),
\end{align*}
so $C$ is just the dg coalgebra $C_{\omega}(\cA)$ of Definition \ref{Comegadef}. 

\end{example}

\subsubsection{Preduals}

\begin{definition}\label{predualdef}
Given $M \in \C_{\dg}(\cA)$, define the predual $M'\in \C_{\dg}(\cA^{\op})$ to be the dg functor $M'\co \cA \to \C_{\dg}(k)$  given by $M'(Y)= \HHom_{\C_{\dg}(\cA)}(M, Y)$. Note that this construction is only quasi-isomorphism invariant for $M \in \cD_{\dg}(\cA)$. 
\end{definition}

Observe that for $X \in \cA$ and the Yoneda embeddings $h$, we have $h_X'= h_{X^{\op}}$, giving  isomorphisms $N\ten_{\cA}h_X \cong N(X) \cong \HHom_{\cA^{\op}}(h_X',N)$ for all $N \in \C_{\dg}(\cA^{\op})$. Passing to finite complexes and arbitrary colimits in $\C_{\dg}(\cA)$, this gives us a natural transformation
\[
 N\ten_{\cA}M \to \HHom_{\cA^{\op}}(M',N)
\]
for all $M \in \C_{\dg}(\cA)$; this is necessarily an isomorphism when $M \in \per_{\dg}(\cA)$ because both sides preserve finite complexes and direct summands.

\subsection{Monoidal categories}

In order to recover the setting of \cite[Ch. II]{tannaka}, we now introduce monoidal structures. For the purposes of this subsection $(\cA,\boxtimes,\mathbbm{1})$ is a strictly  monoidal dg category, so we have $k$-linear dg functors $\mathbbm{1} \co k \to \cA$ and  $\boxtimes \co \cA \ten \cA \to \cA$, such that if we also write $\mathbbm{1}$ for the image of the unique object in $k$,
\[
 (X\boxtimes Y)\boxtimes Z = X \boxtimes (Y \boxtimes Z), \quad \mathbbm{1} \boxtimes X = X, \quad X \boxtimes \mathbbm{1} = \mathbbm{1}.
\]

\begin{definition}
 Say that a dg functor $\omega\co \cA \to \per_{\dg}(k)$  is lax monoidal if it is equipped with natural transformations
\[
 \mu_{XY} \co \omega(X)\ten \omega(Y)\to \omega(X\boxtimes Y), \quad  \eta \co k \to \omega(\mathbbm{1})
\]
satisfying associativity and unitality conditions.

It is said to be strict (resp. strong, resp. quasi-strong) if $\mu$ and $\eta$ are equalities (resp. isomorphisms, resp. quasi-isomorphisms).
\end{definition}

\begin{remark}
The hypothesis that $\cA$ and $\omega$ be strictly monoidal is of course very strong. All the results of this section will be straightforwardly functorial with respect to isomorphisms, though not always with respect to quasi-isomorphisms, so we could replace $\cA$ with any equivalent dg category (quasi-equivalent does not suffice). Thus the results will also apply to strongly monoidal dg categories and functors, where the equalities above are replaced by isomorphisms in such a way that $\z^0\cA$ becomes a strongly monoidal category and $ \omega \co \z^0\cA \to \z^0\per_{\dg}(k)$ a strongly monoidal functor. This condition is satisfied by Example \ref{mothomb}, our main motivating example. 
\end{remark}

\subsubsection{The Tannakian envelope for strongly monoidal functors}\label{monoidalenv}

\begin{definition}
 Given dg functors $F \co \cB \to \per_{\dg}(k) $, $G \co \C \to \per_{\dg}(k) $, define the external tensor product
\[
 F\odot G \co \cB \ten \C \to \per_{\dg}(k) 
\]
by $(F\odot G)(X\ten Y):= F(X)\ten_k G(Y)$. 
\end{definition}

\begin{lemma}\label{shufflelemma}
 For dg categories $\cB,\C$ and $k$-linear dg functors $F \co \cB \to \per_{\dg}(k) $,  $G \co \C \to \per_{\dg}(k) $, the dg coalgebras of Proposition \ref{coalg} have canonical quasi-isomorphisms
\begin{align*}
 C_F(\cB) \ten_k C_G(\C) &\to C_{F\odot G} (\cB\ten \C),\\
NC_F(\cB) \ten_k NC_G(\C) &\to NC_{F\odot G} (\cB\ten \C).
\end{align*}
These maps are symmetric on interchanging $(\cB,F)$ and $(\C,G)$, and the construction is associative in the sense that it induces a unique map 
\[
 C_F(\cB) \ten_k C_G(\C)\ten_k C_H(\cD)\to C_{F\odot G\odot H} (\cB\ten \C\ten \cD),
\]
and similarly for $NC$.
\end{lemma}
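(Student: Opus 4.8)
The plan is to realise both sides as totalisations of (bi)simplicial cochain complexes and to exhibit the comparison map as an Eilenberg--MacLane shuffle map. First I would note that an object of $\cB\ten\cC$ is a pair $(X,Y)$ with $(\cB\ten\cC)((X,Y),(X',Y'))=\cB(X,X')\ten_k\cC(Y,Y')$, and that $(F\odot G)\ten(F\odot G)^{\vee}$ evaluated at $((X_n,Y_n),(X_0,Y_0))$ is $F(X_n)\ten G(Y_n)\ten F(X_0)^{\vee}\ten G(Y_0)^{\vee}$. Reordering the tensor factors of $\uline{\CCC}_n(\cB\ten\cC,(F\odot G)\ten(F\odot G)^{\vee})$ with the appropriate Koszul signs, so as to collect all the $\cB$- and $F$-factors to the left and the $\cC$- and $G$-factors to the right, gives a natural isomorphism of simplicial cochain complexes
\[
 \uline{\CCC}_{\bt}(\cB\ten\cC,(F\odot G)\ten(F\odot G)^{\vee})\cong \diag\big(\uline{\CCC}_{\bt}(\cB,F\ten F^{\vee})\boxtimes \uline{\CCC}_{\bt}(\cC,G\ten G^{\vee})\big),
\]
where $\boxtimes$ denotes the external product, the bisimplicial cochain complex with $(m,n)$-term $\uline{\CCC}_m(\cB,F\ten F^{\vee})\ten_k\uline{\CCC}_n(\cC,G\ten G^{\vee})$. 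Under this identification the face, degeneracy and (crucially) the comultiplication maps of Proposition \ref{coalg} on the left factor through the corresponding bisimplicial ones on the right: cutting a length-$(m+n)$ word of $\cB\ten\cC$ at position $m$ cuts the $\cB$-word and the $\cC$-word simultaneously, and the inserted $\id_{(X_m,Y_m)}=\id_{X_m}\ten\id_{Y_m}$ is exactly the product of the coevaluations for $\cB$ and for $\cC$.

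Next I would take the comparison to be the totalised Eilenberg--MacLane shuffle map
\[
 \nabla\co \Tot\big(\uline{\CCC}_{\bt}(\cB,F\ten F^{\vee})\boxtimes \uline{\CCC}_{\bt}(\cC,G\ten G^{\vee})\big)\lra \Tot\,\diag\big(\uline{\CCC}_{\bt}(\cB,F\ten F^{\vee})\boxtimes \uline{\CCC}_{\bt}(\cC,G\ten G^{\vee})\big),
\]
whose source is $C_F(\cB)\ten_k C_G(\cC)$ and whose target is $C_{F\odot G}(\cB\ten\cC)$. By the Eilenberg--Zilber theorem $\nabla$ is a chain homotopy equivalence, with explicit homotopy inverse the Alexander--Whitney map; since this is a genuine homotopy equivalence rather than merely a Künneth quasi-isomorphism, it is valid over an arbitrary base ring $k$ and it descends to the normalised quotients, yielding the stated maps for both $C$ and $NC$. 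The values of $F$ and $G$ in $\per_{\dg}(k)$ play no role here beyond ensuring that the duals $F^{\vee},G^{\vee}$ are defined.

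The substantive point, and the step I expect to be the main obstacle, is verifying that $\nabla$ is a morphism of dg coalgebras, i.e. that it intertwines the comultiplication on the target with the tensor-product comultiplication $(\nabla\ten\nabla)\circ(\id\ten\tau\ten\id)\circ(\Delta\ten\Delta)$ on the source. This reduces to the combinatorial assertion that the shuffle product is compatible with the deconcatenation coproduct: cutting a shuffle of a $\cB$-word $a_1\cdots a_m$ and a $\cC$-word $b_1\cdots b_n$ at a fixed position, and summing over all shuffles and all cut points, produces precisely the sum over all pairs $(i,j)$ of (shuffles of $a_1\cdots a_i$ with $b_1\cdots b_j$) tensored with (shuffles of $a_{i+1}\cdots a_m$ with $b_{j+1}\cdots b_n$), since any shuffle restricts to a shuffle on each side of a cut. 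I would then check that the inserted coevaluation $\id_{X_m}\ten\id_{Y_m}$ matches the product of the two coevaluations on the factors, and carry out the delicate Koszul-sign bookkeeping to confirm the signs agree.

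Finally, the symmetry of the maps under interchanging $(\cB,F)$ and $(\cC,G)$, and their associativity, are immediate consequences of the corresponding standard properties of the Eilenberg--MacLane shuffle map, transported across the reordering isomorphism of the first paragraph; in particular associativity gives the well-defined threefold map $C_F(\cB)\ten_k C_G(\cC)\ten_k C_H(\cD)\to C_{F\odot G\odot H}(\cB\ten\cC\ten\cD)$, and likewise for $NC$.
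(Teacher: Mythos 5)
Your proposal is correct and follows essentially the same route as the paper: identify $\uline{\CCC}_{\bt}(\cB\ten\C,(F\odot G)^{\vee}\ten(F\odot G))$ levelwise with the diagonal $m\mapsto\uline{\CCC}_m(\cB,F)\ten_k\uline{\CCC}_m(\C,G)$, then use the Eilenberg--Zilber shuffle map as the comparison, with symmetry, associativity and compatibility with the comultiplications coming from standard properties of the shuffle product. The only difference is presentational: the compatibility of the shuffle map with the deconcatenation-type coproduct, which you rightly flag as the substantive check, is exactly what the paper outsources to the cited result of Quillen [QRat, I.4.2--3] rather than verifying by hand.
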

\begin{proof}
First observe that we have  canonical isomorphisms
\[
\uline{\CCC}_m(\cB\ten \C , (F\odot G)^{\vee}\ten (F\odot G) ) \cong \uline{\CCC}_m(\cB,F) \ten_k   \uline{\CCC}_m(\C , G)
\]
for all $m$. These isomorphisms are clearly compatible with the comultiplication  maps $\Delta\co \uline{\CCC}_{m+n}\to \uline{\CCC}_m\ten\uline{\CCC}_n$ and with the simplicial operations.

Now the Eilenberg--Zilber shuffle product of   \cite[I.4.2--3 ]{QRat} gives a symmetric associative quasi-isomorphism from $C_F(\cB) \ten_k C_G(\C)$ to the total complex of the simplicial cochain complex $m \mapsto \uline{\CCC}_m(\cB,F) \ten_k   \uline{\CCC}_m(\C , G)$, which is compatible with the comultiplications.  Combined with the isomorphisms above, this gives
\[
 C_F(\cB) \ten_k C_G(\C) \to C_{F\odot G} (\cB\ten \C),
\]
and similarly on normalisations. 
\end{proof}

\begin{proposition}\label{bialg}
If $\omega \co \cA \to \per_{\dg}(k) $ is strongly monoidal, the monoidal structures endow the   dg coalgebras $C_{\omega}( \cA), N C_{\omega}( \cA)$  with the natural structure of unital dg bialgebras. These are graded-commutative whenever $\boxtimes$ and $\omega$ are symmetric.
\end{proposition}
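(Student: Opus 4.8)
The plan is to assemble the multiplication from the two functorialities already in hand: the shuffle quasi-isomorphism of Lemma \ref{shufflelemma}, which handles external tensor products of pairs $(\cA,\omega)$, and the functoriality in $\cA$ of Lemma \ref{coalgfunlemma}, glued together by the strong monoidal structure on $\omega$. Concretely, I would define the multiplication $m$ on $C_{\omega}(\cA)$ as the composite
\[
 C_{\omega}(\cA)\ten_k C_{\omega}(\cA) \xra{\mathrm{sh}} C_{\omega\odot\omega}(\cA\ten \cA) \xra{\mu_*} C_{\omega\circ\boxtimes}(\cA\ten\cA)\xra{\boxtimes_*} C_{\omega}(\cA),
\]
where $\mathrm{sh}$ is the shuffle map of Lemma \ref{shufflelemma} (with $F=G=\omega$, $\cB=\C=\cA$), $\boxtimes_*$ is induced by the dg functor $\boxtimes\co \cA\ten\cA \to \cA$ via Lemma \ref{coalgfunlemma}, and $\mu_*$ is the isomorphism induced by the natural isomorphism $\mu\co \omega\odot\omega\Rightarrow \omega\circ\boxtimes$ of dg functors $\cA\ten\cA\to\per_{\dg}(k)$. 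This last map uses functoriality of $C_{(-)}(\cA\ten\cA)$ with respect to \emph{natural isomorphisms} of fibre functors: any natural iso $\phi\co F\cong G$ yields an iso of bimodules $F\ten F^{\vee} \cong G\ten G^{\vee}$, hence a coalgebra isomorphism $C_F\cong C_G$, since the comultiplication of Proposition \ref{coalg} refers only to the elements $\id_X\in F(X)\ten F(X)^{\vee}$. For the unit $u\co k \to C_{\omega}(\cA)$ I would use the unit object: the dg functor $\mathbbm{1}\co k\to\cA$ gives $C_{\omega\circ\mathbbm{1}}(k)\to C_{\omega}(\cA)$ by Lemma \ref{coalgfunlemma}, and $C_{\omega\circ\mathbbm{1}}(k)$ is (canonically, on normalisations) the matrix coalgebra $\omega(\mathbbm{1})\ten\omega(\mathbbm{1})^{\vee}$, into which $k$ maps by the coevaluation $1\mapsto \id_{\omega(\mathbbm{1})}$.

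The bialgebra compatibility then comes essentially for free. Each of $\mathrm{sh}$, $\mu_*$ and $\boxtimes_*$ is a morphism of dg coalgebras: $\mathrm{sh}$ because Lemma \ref{shufflelemma} asserts compatibility with the comultiplications, where $C_{\omega}(\cA)\ten_k C_{\omega}(\cA)$ carries the tensor-product coalgebra structure; $\boxtimes_*$ by Lemma \ref{coalgfunlemma}; and $\mu_*$ by the isomorphism of the previous paragraph. Hence $m$ is a coalgebra morphism, which is exactly the statement that $\Delta$ is multiplicative. Similarly $u$ is a coalgebra morphism once one checks that $\id_{\omega(\mathbbm{1})}$ is grouplike in the matrix coalgebra, which is immediate. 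I would deduce associativity of $m$ from the triple-shuffle associativity of Lemma \ref{shufflelemma}, the \emph{strict} associativity of $\boxtimes$ on $\cA$, and the associativity coherence of the lax monoidal data $\mu$; and unitality from $\mathbbm{1}\boxtimes X = X = X\boxtimes\mathbbm{1}$ together with the unit axiom $\mu_{\mathbbm{1},X}\circ(\eta\ten\id)=\id$. Graded-commutativity in the symmetric case follows from the symmetry of the shuffle map combined with the symmetry of $\boxtimes$ and of $\mu$. Everything descends verbatim to $NC_{\omega}(\cA)$, since the shuffle map, $\boxtimes_*$, $\mathbbm{1}_*$ and $\mu_*$ all respect normalisations.

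The main obstacle I anticipate is not any single step but the coherent bookkeeping in the associativity and symmetry verifications: one must align the Eilenberg--Zilber signs implicit in the shuffle product of Lemma \ref{shufflelemma} with the Koszul signs in the comultiplication of Proposition \ref{coalg} and with the coherence constraints of the strong monoidal structure, confirming that the associator and symmetry isomorphisms cancel correctly. Because $\cA$ and $\boxtimes$ are taken strictly associative and only $\mu$ is weakened to an isomorphism, the coherence diagrams that must commute reduce to the standard monoidal-functor axioms, so I expect no essential difficulty beyond this sign and coherence tracking.
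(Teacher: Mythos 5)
Your proposal is correct and takes essentially the same route as the paper: the multiplication is the identical three-step composite (the shuffle quasi-isomorphism of Lemma \ref{shufflelemma}, the coalgebra isomorphism $C_{\omega\odot\omega}(\cA\ten\cA)\cong C_{\boxtimes_*\omega}(\cA\ten\cA)$ coming from strong monoidality of $\omega$, then the pushforward along $\boxtimes$ from Lemma \ref{coalgfunlemma}), and the unit is likewise induced by applying Lemma \ref{coalgfunlemma} to $\mathbbm{1}\co k\to\cA$. The points you spell out in more detail --- that natural isomorphisms of fibre functors preserve the coevaluation elements and hence induce coalgebra isomorphisms, that $m$ being a composite of coalgebra morphisms gives the bialgebra compatibility, and that everything descends to $NC_{\omega}(\cA)$ (where grouplikeness of $\id_{\omega(\mathbbm{1})}$ indeed holds because strong monoidality forces $\omega(\mathbbm{1})\cong k$ to be one-dimensional) --- are left implicit in the paper but are exactly the right justifications.
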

\begin{proof}
Define a dg functor $\boxtimes_*\omega$ on $\cA \ten \cA$ by $(\boxtimes_*\omega)(X\ten Y) := \omega(X \boxtimes Y)$. 
 We may  apply Lemma \ref{coalgfunlemma} to the dg functor $\boxtimes$ to obtain a morphism
\[
C_{\boxtimes_*\omega}(\cA \ten \cA) \to  C_{\omega}(\cA). 
\]

Strong monoidality of $\omega$ gives $\boxtimes_*\omega \cong \omega \odot \omega$ and hence $C_{\boxtimes_*\omega}(\cA \ten \cA) \cong C_{\omega\odot \omega}(\cA \ten \cA)$. Lemma \ref{shufflelemma} then provides a dg coalgebra quasi-isomorphism 
\[
 C_{\omega}( \cA) \ten_k C_{\omega}( \cA) \to C_{\omega\odot \omega}(\cA \ten \cA),
\]
which completes the construction of the associative multiplication. This product is moreover commutative whenever $\boxtimes$ and $\omega$ are symmetric, and induces a product on  $NC_{\omega}( \cA)$ similarly.

Applying Lemma \ref{coalgfunlemma} to the  unit $\mathbbm{1}$ similarly induces  morphisms 
\[
k= C_{\id}(k) \cong  C_{\omega \circ \mathbbm{1}}(k) \to C_{\omega}(\cA),
\]
and unitality of $\omega$ and $\mathbbm{1}$ ensures that this is a unit for the multiplication above.
\end{proof}

\begin{remark}\label{antipodermk}
 In the scenario considered in \cite[Ch. II]{tannaka}, the tensor category  was rigid in the sense that it admitted strong duals, or equivalently internal $\Hom$s. Then the Tannaka dual bialgebra $\HH_0(\cA, \omega^{\vee}\ten_k\omega)$  became a Hopf algebra. 

If our dg  category $\cA$ has strong duals, then we may define an involution $\rho$ on $C_{\omega}(\cA)$ by combining the isomorphism $C_{\omega}(\cA)^{\op} \cong C_{\omega^{\vee}}(\cA^{\op})$ with the map $C_{\omega^{\vee}}(\cA^{\op}) \to C_{\omega}(\cA) $ induced by applying  Lemma \ref{coalgfunlemma} to the duality functor.

The condition that $\rho$ be an antipode on a bialgebra $C$ is that the diagrams
\[
 \begin{CD}
  C @>{\Delta}>> C\ten C\\
@V{\vareps}VV @VV{(\rho \ten \id)}V \\
k @>1>> C
 \end{CD} \quad\quad \quad
\begin{CD}
  C @>{\Delta}>> C\ten C\\
@V{\vareps}VV @VV{(\id \ten \rho )}V \\
k @>1>> C
 \end{CD} 
\]
commute.

On the bialgebra $\pi_0\uline{\CCC}_{\bt}(\cA, \omega^{\vee}\ten \omega)$, it turns out that $\rho$ defines an antipode, making the bialgebra into a Hopf algebra and recovering the construction of \cite[II.2]{tannaka}. However, the  dg bialgebras $C_{\omega}( \cA), N C_{\omega}( \cA)$ are far from being dg Hopf algebras.
This is easily seen by looking at 
\[
 \uline{\CCC}_0(\cA, \omega^{\vee}\ten \omega)^{\vee}= \prod_{X \in \cA} \End(\omega X).
\]
The antipodal condition above reduces to saying that for all $f \in \uline{\CCC}_0(\cA, \omega^{\vee}\ten \omega)^{\vee}$, we require that $\omega(\eps_X)\circ  f_{X^*\boxtimes X}= f_1 \circ \omega(\eps_X)$, for $\eps_X \co X^*\boxtimes X\to {\mathbbm{1}}$ the duality transformation. 
 There are few dg categories $\cA$ for which this holds, so $\rho$ seldom makes $\uline{\CCC}_0(\cA, \omega^{\vee}\ten \omega)$ into a Hopf algebra. However, the condition above automatically holds for all Hochschild $0$-cocycles, which is why  $\pi_0\uline{\CCC}_{\bt}(\cA, \omega^{\vee}\ten \omega)$ is a Hopf algebra.

In the sequel \cite[\S \ref{HHtannaka2-tensorsubsn}]{HHtannaka2} there appears a context where a variant of the Hochschild complex does have a suitable antipode, and hence the structure of a Hopf algebra.
\end{remark}

\subsubsection{The Tannakian envelope for lax monoidal functors}\label{laxsn}

If the dg  functor $\omega$ is only quasi-strong, it is too much to expect that $C_{\omega}(\cA)$ will be a bialgebra in general. A dg bialgebra is a monoid in dg coalgebras,  and it is not usually possible to strictify algebraic and coalgebraic structures simultaneously, so $C_{\omega}(\cA)$ should be a form of strong homotopy monoid in dg coalgebras.

We will now construct the structures enriching $C_{\omega}(\cA)$ for any lax monoidal dg functor $\omega$. When $\omega$ is quasi-strong, Corollary \ref{QIMcor} will ensure that this indeed gives a form of strong homotopy monoid.

\begin{definition}
Define $I$ to be the category on two objects $0,1$ with a unique non-identity morphism $ 0 \to 1$. Define $K$ to be the category whose objects are $I, \{0\},\{1\}$ and whose non-identity morphisms are the inclusions $\{0\}, \{1\} \to I$. Thus the objects of $K$ are categories in their own right. 
\end{definition}

As in \cite[Definition \ref{monad-delta**}]{monad}, we will write $\Delta_{**}$ for the subcategory of the ordinal number category $\Delta$ consisting of morphisms which fix the initial and final vertices. This has a monoidal structure given by setting $\mathbf{m}\ten \mathbf{n}= \mathbf{m+n}$. As in \cite[Definition \ref{monad-cflein1}]{monad}, $\Delta_{**}$ is opposite to the augmented ordinal number category, so an up-to-homotopy monoid structure on $C \in\C$ in the sense of 
\cite{leinster} is a colax monoidal functor $M\co \Delta_{**} \to \C$ for which the maps $M(i+j)\to M(i)\ten M(j)$ are weak equivalences and $M(\mathbf{1})=C$.

We now adapt \cite[Definition \ref{monad-Xidef}]{monad}:
\begin{definition}
 Define $\bK \co \Delta_{**} \to \Cat$ to be the category-valued lax monoidal functor given on objects by  $\mathbf{0} \mapsto *$,  $\mathbf{n}\mapsto K^{n-1}$ and on morphisms by
\begin{eqnarray*}
\bK(\pd^i)(k_1,\ldots,k_n)&=&(k_1, \ldots,k_{i-1},\{1\},k_i,\ldots,k_n);\\ 
\sigma^i(k_1,\ldots,k_n)&=&(k_1, \ldots,k_{i-1}, m(k_i,k_{i+1}),k_{i+2},\ldots, k_n);\\
\sigma^0(k_1,\ldots,k_n)&=&(k_2, \ldots,k_n);\\
\sigma^{n}(k_1,\ldots,k_n)&=&(k_1, \ldots,k_{n-1}),
\end{eqnarray*}
where $m \co K^2 \to K$ is the symmetric functor determined by $m(\{0\}, k)= \{0\}$, $m(\{1\}, k)= k$, $m(I,I)=I$. The monoidal structure on $\bK$ is given by the maps
\[
 K^{m-1} \by K^{n-1} \cong K^{m-1} \by\{0\} \by  K^{n-1}\subset K^{m+n-1}.
\]
\end{definition}

\begin{proposition}\label{weakbialg} 
If $\omega \co \cA \to \per_{\dg}(k) $ is lax monoidal with $\omega(\mathbbm{1})=k$, the monoidal structures give rise to  a colax monoidal functor $\bC_{\omega}(\cA)$ from the opposite $\Gamma(\bK^{\op})^{\op}$ of the Grothendieck construction $\Gamma(\bK^{\op})$ of $\bK^{\op}$ to the category of dg coalgebras over $k$, with $\bC_{\omega}(\cA)(\mathbf{1})= C_{\omega}(\cA)$. There is a similar construction for $NC_{\omega}(\cA)$.
\end{proposition}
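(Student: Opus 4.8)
The plan is to realise $\bC_{\omega}(\cA)$ by transporting the combinatorial machinery of \cite{monad} through the Tannakian envelope $C_{(-)}$, using only the two functorialities we actually possess: functoriality in the underlying dg category (Lemma \ref{coalgfunlemma}) and the external shuffle product (Lemma \ref{shufflelemma}). The essential constraint is that $C_{F}(\cB)$ is \emph{not} functorial in the coefficient dg functor $F$, since the predual appears with the opposite variance (as already noted before Proposition \ref{bialg}). Consequently the lax structure maps $\mu_{XY}\co \omega(X)\ten\omega(Y)\to\omega(X\boxtimes Y)$ must never be fed to $C_{(-)}$ directly; instead I would encode each such map as an honest dg functor out of an enlarged index category, and only then apply $C_{(-)}$.

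Concretely, writing $\mathbb{I}$ for the $k$-linearisation of the interval category $I$, a dg functor $(\cA\ten\cA)\ten\mathbb{I}\to\per_{\dg}(k)$ is precisely a natural transformation between its two restrictions, so $\mu$ corresponds to a single functor $\tilde\omega$ with $\tilde\omega|_{0}=\omega\odot\omega$ and $\tilde\omega|_{1}=\boxtimes_*\omega$. The endpoint inclusions $\{0\},\{1\}\co \cA\ten\cA\to(\cA\ten\cA)\ten\mathbb{I}$ are genuine dg functors, so Lemma \ref{coalgfunlemma} produces coalgebra maps from $C_{\omega\odot\omega}(\cA\ten\cA)$ and from $C_{\boxtimes_*\omega}(\cA\ten\cA)$ into $C_{\tilde\omega}((\cA\ten\cA)\ten\mathbb{I})$. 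This is exactly the data demanded by the object $I\in K$ of the fibre, with the endpoint objects $\{1\},\{0\}\in K$ carrying $C_{\omega\odot\omega}(\cA\ten\cA)$ and $C_{\boxtimes_*\omega}(\cA\ten\cA)$ respectively. I would then follow the recipe packaged in $\bK$ to assign, to a general object $(\mathbf{n},(k_1,\ldots,k_{n-1}))$ of the Grothendieck construction, a dg category $\cA_\kappa$ and dg functor $\omega_\kappa\co\cA_\kappa\to\per_{\dg}(k)$ assembled from $n$ copies of $\cA$, the functor $\boxtimes$, and a copy of $\mathbb{I}$ for each entry $k_i=I$: at the all-$\{1\}$ vertex $\omega_\kappa$ is the external product $\omega^{\odot n}$, at the all-$\{0\}$ vertex it is the fully multiplied $\omega\circ\boxtimes^{(n)}$, and the $I$-entries interpolate between these via the iterated $\mu$. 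The hypothesis $\omega(\mathbbm{1})=k$ guarantees that the unit object $\mathbf{0}$ receives the coalgebra $k$ strictly. Setting $\bC_{\omega}(\cA)(\mathbf{n},\kappa):=C_{\omega_\kappa}(\cA_\kappa)$ gives a coalgebra by Proposition \ref{coalg}, and $\bC_{\omega}(\cA)(\mathbf{1})=C_{\omega}(\cA)$ because $\cA_{(\mathbf{1})}=\cA$ and $\omega_{(\mathbf{1})}=\omega$.

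For the morphisms I would split each arrow of $\Gamma(\bK^{\op})^{\op}$ into its $\Delta_{**}$-component and its fibre ($K$)-component. The $\Delta_{**}$-cofaces insert $\{1\}$'s and so induce coalgebra maps through the functor $\mathbbm{1}$, the codegeneracies apply $m$ and so induce multiplications through $\boxtimes^{(n)}$, both via Lemma \ref{coalgfunlemma}; the fibre morphisms $\{0\},\{1\}\to I$ induce the interval-endpoint maps described above. Respect for composition then reduces to the strict functoriality of $C_{(-)}$ in the dg category, combined with the coherences already built into $\bK$ in \cite{monad}. Finally the colax monoidal structure comes from the shuffle quasi-isomorphisms of Lemma \ref{shufflelemma}: the monoidal law $\mathbf{m}\ten\mathbf{n}=\mathbf{m+n}$ on $\bK$ matches the external product $\odot$ of index categories, and the symmetric associative maps $C_F(\cB)\ten_k C_G(\C)\to C_{F\odot G}(\cB\ten\C)$ supply the structure maps, the outer opposite in $\Gamma(\bK^{\op})^{\op}$ being taken precisely to convert this lax comparison into the required colax one. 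Everything applies verbatim to the normalisation $NC$, since both Lemma \ref{coalgfunlemma} and Lemma \ref{shufflelemma} descend to normalisations.

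The main obstacle will be the coherence and variance bookkeeping rather than any single computation. Because $C_{(-)}$ fails to be functorial in the coefficient, the delicate part is checking that routing every lax datum through interval categories and external products reproduces exactly the diagrams that $\bK$ is designed to control, so that the assignment on morphisms is strictly functorial and the colax maps satisfy their associativity and unitality constraints. Granting the framework of \cite{monad}, this becomes a matter of matching the shuffle product and the index-functoriality of $C_{(-)}$ against the structure maps of $\bK$, with the contractibility of $\mathbb{I}$ ensuring that the interpolation coalgebras behave like genuine cylinders.
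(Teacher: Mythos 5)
Your overall architecture coincides with the paper's: encode each lax structure map $\mu$ as an honest dg functor on a product $\cA^{\ten n}\by I^{n-1}$ carrying an interpolating fibre functor $\uline{\omega}$, set $\bC_{\omega}(\cA)(\mathbf{n},k):=C_{\uline{\omega}|_{k}}(\cA^{\ten n}\by k)$ for the subcategories $k\in K^{n-1}$ of $I^{n-1}$, realise the fibre morphisms of $K^{n-1}$ via Lemma \ref{coalgfunlemma} applied to the inclusions, and obtain the monoidal structure maps from the shuffle quasi-isomorphisms of Lemma \ref{shufflelemma}; the normalised case is handled identically. This is precisely the proof strategy of the paper, and your identification of the central constraint (that $C_{(-)}$ is not functorial in the coefficient, so $\mu$ must never be fed to it directly) is also the paper's.

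There is, however, a concrete error in your routing of the cosimplicial operators, and with the paper's fixed definition of $\bK$ it is not a harmless relabelling. In $\bK$, the coface $\pd^{i}$ inserts $\{1\}$, and the value of $\uline{\omega}$ at a $\{1\}$-coordinate is the \emph{merged} functor $\boxtimes_{*}\omega$ (your opening paragraph states this correctly, $\tilde{\omega}|_{0}=\omega\odot\omega$ and $\tilde{\omega}|_{1}=\boxtimes_{*}\omega$, but your later paragraphs silently reverse it, e.g.\ claiming the all-$\{1\}$ vertex carries $\omega^{\odot n}$). Consequently the arrow $(\mathbf{n+1},\pd^{i}(k))\to(\mathbf{n},k)$ of $\Gamma(\bK^{\op})^{\op}$ must be realised by the \emph{multiplication} dg functor: one has the strict identity $\uline{\omega}|_{\pd^{i}(k)}=\uline{\omega}|_{k}\circ\boxtimes_{i,i+1}$, so Lemma \ref{coalgfunlemma} yields a coalgebra map $\bC_{\omega}(\cA)(\mathbf{n+1},\pd^{i}(k))\to\bC_{\omega}(\cA)(\mathbf{n},k)$ in the covariant direction; dually the codegeneracies are realised by inserting the unit $\mathbbm{1}$, which is where the hypothesis $\omega(\mathbbm{1})=k$ is really used (not merely to make $\bC_{\omega}(\cA)(\mathbf{0})=k$). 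Your assignment --- cofaces via $\mathbbm{1}$, codegeneracies via $\boxtimes$ --- fails on both counts. Unit insertion \emph{is} fibre-compatible for a coface pair, but it induces a map $\bC_{\omega}(\cA)(\mathbf{n},k)\to\bC_{\omega}(\cA)(\mathbf{n+1},\pd^{i}(k))$, the wrong direction for a functor on $\Gamma(\bK^{\op})^{\op}$. Worse, for a codegeneracy pair the multiplication functor is not fibre-compatible at all: for instance $\omega^{\odot(n+1)}\neq\omega^{\odot n}\circ\boxtimes_{i,i+1}$, the two sides agreeing only up to the lax map $\mu$ --- exactly the datum you correctly said must never be passed through $C_{(-)}$ --- so Lemma \ref{coalgfunlemma} simply does not apply there. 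The repair is to swap the two realisations back, keeping $\{0\}$ as the "external product" value throughout; this is also what makes the $\{0\}$-separator in the monoidal structure of $\bK$ match the shuffle maps of Lemma \ref{shufflelemma}.
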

\begin{proof}
We prove this for  $C_{\omega}(\cA)$, the proof for $NC_{\omega}(\cA)$ being entirely similar. On the category $\cA^{\ten 2}$, $\omega$ induces dg fibre functors $\omega\odot \omega$ and $ \boxtimes_* \omega$. The transformation between them gives a dg fibre functor $\uline{\omega}$ on $\cA^{\ten 2} \by I$ with $(X,Y,0) \mapsto \omega(X)\ten \omega(Y)$ and $(X,Y,1) \mapsto \omega(X\boxtimes Y)$. 

Iterating this construction gives us dg fibre functors $\uline{\omega} \co \cA^{\ten n} \by I^{n-1}\to \per_{\dg}(k) $ for all $n$. At a vertex of $I^{n-1}$, the corresponding fibre functor is given by writing $\omega$ for each $0$ co-ordinate and $\boxtimes_*$ for each $1$ co-ordinate, then appending a final $\omega$ and introducing  $\odot$s as separators. 
For $I^{n-1} \in K^{n-1}$, we define
\[
 \bC_{\omega}(\cA)(n,I^{n-1}):= C_{\uline{\omega}}(\cA \by I^{n-1}). 
\]

Now, any object $k \in K^{n-1}$ is a subcategory of $I^{n-1}$, and we may define $ \bC_{\omega}(\cA)(n,k):= C_{\uline{\omega}|_k}(\cA \by k)$. Lemma \ref{coalgfunlemma} then gives morphisms $ \bC_{\omega}(\cA)(n,k)\to \bC_{\omega}(\cA)(n,l)$ for each morphism $k \to l$ in $K^{n-1}$. This defines $\bC_{\omega}(\cA)$ on the subcategories $\bK(n) \subset \Gamma(\bK^{\op})^{\op}$, and it remains to define the images of the cosimplicial morphisms $\pd^i, \sigma^i$ and the monoidal structure.

The fibred dg functor  $(\cA\ten \cA, \omega \boxtimes \omega) \xra{\boxtimes} (\cA, \omega)$ induces fibred dg functors $(\cA^{\ten n+1} \by \pd^i(k), \uline{\omega}|_{\pd^i(k)}) \to (\cA^{\ten n} \by k, \uline{\omega}|_k)$ for all $i,n$ and $k \in K^{n-1}$. By Lemma \ref{coalgfunlemma}, this gives a morphism
\[
 \pd_i \co \bC_{\omega}(\cA)(n+1, \pd^i(k)) \to \bC_{\omega}(\cA)(n, k)
\]
of dg coalgebras, which we define to be the image of $\pd_i \co (\mathbf{n+1}, \pd^i(k)) \to (\mathbf{n}, k)$.

Substituting the unit $\mathbbm{1} \in \cA$ in either factor induces fibred dg functors $(\cA, \omega) \to (\cA\ten \cA, \omega \boxtimes \omega)$ and $(\cA, \omega) \to (\cA\ten \cA, \omega \odot \omega)$. Similar arguments show that these induce morphisms
\[
 \sigma_i \co \bC_{\omega}(\cA)(n-1, \sigma^i(k)) \to \bC_{\omega}(\cA)(n, k).
\]

To define the colax monoidal structure, we need morphisms 
\[
\bC_{\omega}(\cA)(m,k) \ten \bC_{\omega}(\cA)(n,l)  \to  \bC_{\omega}(\cA)(m+n, k\by \{0\} \by l),
\]
but these are just given by Lemma \ref{shufflelemma}.
\end{proof}

\begin{remark}
We can give $\Gamma(\bK^{\op})$ the structure of a relative category by setting a  morphism to be a weak equivalence when its image in $\Delta_{**}$ is the identity.  Since $K$ has a final object, its nerve is contractible, so the projection map $\Gamma(\bK^{\op}) \to \Delta_{**} $ is a weak equivalence of relative categories in the sense of \cite{barwickkanrelative}. 

If $\omega$ is quasi-strong, then Corollary \ref{QIMcor} will imply that $\bC_{\omega}$  sends all weak equivalences to derived Morita equivalences. If we let $\C$ be the relative category of dg $k$-coalgebras with weak equivalences given by derived Morita equivalences, then Proposition \ref{weakbialg} gives a   colax monoidal functor $ (\int\bK^{\op})^{\op}   \to \C$ of relative categories. By contrast, an  up-to-homotopy monoid is just a colax monoidal functor $\Delta_{**}^{\op} \to \C$  of relative categories. Since $\int\bK^{\op} $ is weakly equivalent to $\Delta_{**} $,  Proposition \ref{weakbialg} can thus be regarded as giving $C_{\omega}(\cA)$ the structure of a monoid up to coherent homotopy whenever $\omega$ is quasi-strong.

If the monoidal structure on the pair $(\cA,\omega)$ is moreover symmetric, then structures above can be adapted by replacing $\Delta$ with the category of finite sets, thus incorporating symmetries by not restricting to non-decreasing maps, and giving rise to a homotopy coherent symmetric monoid.
\end{remark}

\subsubsection{The universal bialgebra}\label{univbialg}

The monoidal structure $\boxtimes$ on $\cA$ induces a monoidal structure on $\cA^{\op}$, which we also denote by $\boxtimes$. There is also a monoidal structure $\boxtimes^2$ on $\cA^{\op}\ten \cA$, given by $(X\ten Y)\boxtimes^2(X'\ten Y'):= (X\boxtimes X')\ten (Y\boxtimes Y')$.

\begin{definition}\label{boxtimesdgdef}
 Define the dg functor 
\[
 \boxtimes \co \cD_{\dg}(\cA) \ten \cD_{\dg}(\cA) \to \cD_{\dg}(\cA)
\]
 as follows. The dg functor $\boxtimes \co \cA \ten \cA \to \cA$ induces a dg functor $\cD_{\dg}(\cA\ten \cA) \to \cD_{\dg}(\cA)$, which we compose with the dg functor $ \cD_{\dg}(\cA) \ten \cD_{\dg}(\cA) \xra{\odot} \cD_{\dg}(\cA\ten \cA) $ given by $(M\odot N)(X \ten Y):= M(X) \ten_k N(Y)$ for $X, Y \in \cA$.
    Define  $\boxtimes \co \cD_{\dg}(\cA^{\op}) \ten \cD_{\dg}(\cA^{\op}) \to \cD_{\dg}(\cA^{\op})$ and   $\boxtimes^2 \co \cD_{\dg}(\cA^{\op} \ten \cA) \ten \cD_{\dg}(\cA^{\op} \ten \cA) \to \cD_{\dg}(\cA^{\op} \ten \cA)$ similarly.
\end{definition}
In simpler terms, $\boxtimes$ is just given by extending the dg functor on $\cA$ to finite complexes, filtered colimits and direct summands.

\begin{definition}
 Define $\boxtimes_* \co \C_{\dg}(\cA) \to \C_{\dg}(\cA \ten \cA)$ by setting
\[
 (\boxtimes_*M)(X\ten Y) = M(X \boxtimes Y)
\]
for $X,Y \in \cA$.

Define $\boxtimes_* \co \C_{\dg}(\cA^{\op}) \to \C_{\dg}(\cA^{\op} \ten \cA^{\op})$ and $\boxtimes_*^2 \co \C_{\dg}(\cA^{\op}\ten \cA) \to \C_{\dg}(\cA^{\op}\ten \cA \ten \cA^{\op}\ten \cA)$ similarly.
\end{definition}

\begin{remark}\label{boxadjoint}
 For $S,T \in \cD_{\dg}(\cA)$ and $M \in \C_{\dg}(\cA)$, note that we have a natural isomorphism
\[
 \HHom_{\C_{\dg}(\cA)}(S \boxtimes T, M) \cong \HHom_{\C_{\dg}(\cA\ten \cA)}(S\ten_k T, \boxtimes_*M).
\]
 This isomorphism is tautological when $S=h_X,T=h_Y$ for $X,Y \in \cA$, noting that 
\[
 h_{X\ten Y}(U\ten V)= \cA(X,U)\ten_k \cA(Y,V)= h_X\ten_k h_Y.
\]
The general case follows by passing to complexes and direct summands.

The same observation holds for any monoidal dg category, and hence to $(\cA^{\op}, \boxtimes)$ and $(\cA^{\op}\ten \cA, \boxtimes^2)$.
\end{remark}

\begin{lemma}
The unit $\id_{\cA} \in \C(\cA\ten \cA^{\op})$ is equipped with a canonical associative multiplication
\[
 \id_{\cA}\ten_k \id_{\cA} \to \boxtimes_*^2\id_{\cA},
\]
 which is commutative whenever $\boxtimes$ is symmetric. The unit for this multiplication is 
\[
 \id_{\mathbbm{1}} \in \cA({\mathbbm{1}},{\mathbbm{1}})= \id_{\cA}({\mathbbm{1}},{\mathbbm{1}})= {\mathbbm{1}}^2_*k,
\]
for ${\mathbbm{1}}^2_*\co \C_{\dg}(k) \to \C_{\dg}(\cA^{\op}\ten \cA)$.
\end{lemma}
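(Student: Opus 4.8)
The plan is to produce the multiplication directly from the fact that $\boxtimes\co\cA\ten\cA\to\cA$ is a dg bifunctor. Recalling the convention $\id_{\cA}(X,Y)=\cA(X,Y)=\HHom_{\cA}(Y,X)$, the external product $\id_{\cA}\ten_k\id_{\cA}$ has value $\cA(X,Y)\ten_k\cA(X',Y')$ at a quadruple $(X,Y,X',Y')$, while $\boxtimes_*^2\id_{\cA}$ has value $\cA(X\boxtimes X',Y\boxtimes Y')$ there. I would define the multiplication objectwise to be the action of $\boxtimes$ on morphism complexes,
\[
\cA(X,Y)\ten_k\cA(X',Y')=\HHom_{\cA}(Y,X)\ten_k\HHom_{\cA}(Y',X')\xra{\boxtimes}\HHom_{\cA}(Y\boxtimes Y',X\boxtimes X')=\cA(X\boxtimes X',Y\boxtimes Y').
\]
Since $\boxtimes$ is a dg functor, each such map is a morphism of cochain complexes. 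One could equivalently present the data via the $(\boxtimes^2,\boxtimes_*^2)$-adjunction of Remark \ref{boxadjoint}, under which it corresponds to a map $\id_{\cA}\boxtimes^2\id_{\cA}\to\id_{\cA}$, but the objectwise description above is the most transparent.

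First I would check that these objectwise maps assemble into a morphism in $\C_{\dg}(\cA^{\op}\ten\cA\ten\cA^{\op}\ten\cA)$, i.e. that they are equivariant for the four commuting $\cA$-actions. This is exactly the interchange law
\[
(f\boxtimes f')\circ(g\boxtimes g')=(f\circ g)\boxtimes(f'\circ g'),
\]
which expresses compatibility of $\boxtimes$ with composition and is automatic from $\boxtimes$ being a bifunctor. Associativity of the multiplication then follows from the strict associativity $(X\boxtimes Y)\boxtimes Z=X\boxtimes(Y\boxtimes Z)$ together with the same interchange law: the two bracketings of
\[
\cA(X,Y)\ten_k\cA(X',Y')\ten_k\cA(X'',Y'')\to\cA(X\boxtimes X'\boxtimes X'',Y\boxtimes Y'\boxtimes Y'')
\]
agree on the nose, the strictness hypotheses making the relevant coherence identities hold as equalities rather than up to isomorphism. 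When $\boxtimes$ is symmetric, composing with the symmetry of $\ten_k$ and with the symmetry isomorphisms $X\boxtimes X'\cong X'\boxtimes X$ yields commutativity, using symmetric monoidal coherence.

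For the unit, the element $\id_{\mathbbm{1}}\in\cA({\mathbbm{1}},{\mathbbm{1}})={\mathbbm{1}}^2_*k$ determines a morphism $k={\mathbbm{1}}^2_*k\to\id_{\cA}$, and I would verify the unit axioms by evaluating the composite $\cA({\mathbbm{1}},{\mathbbm{1}})\ten_k\cA(X,Y)\to\cA({\mathbbm{1}}\boxtimes X,{\mathbbm{1}}\boxtimes Y)=\cA(X,Y)$ on $\id_{\mathbbm{1}}\ten a$, which returns $\id_{\mathbbm{1}}\boxtimes a=a$ by the strict unitality ${\mathbbm{1}}\boxtimes X=X$ and functoriality of $\boxtimes$ on identities; the right unit is symmetric. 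I do not expect a genuine obstacle: the only real work is the bookkeeping of the fourfold bimodule structure and the op-conventions, namely confirming that the objectwise composition respects the differentials and is equivariant for all four $\cA$-actions simultaneously. The strictness of $(\cA,\boxtimes,\mathbbm{1})$ is precisely what lets every coherence identity be checked as an equality.
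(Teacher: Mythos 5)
Your proposal is correct and follows essentially the same route as the paper: the paper's proof likewise defines the multiplication objectwise at $X\ten Y\ten X'\ten Y'$ as the map $\cA(X,Y)\ten_k\cA(X',Y')\to\cA(X\boxtimes X',Y\boxtimes Y')$ induced by the bilinearity of $\boxtimes$, leaving the equivariance, associativity, commutativity and unit checks implicit. Your write-up simply makes those routine verifications (interchange law, strict associativity/unitality) explicit.
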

\begin{proof}
Evaluated at $X\ten Y\ten X'\ten Y' \in \cA\ten \cA^{\op}\ten \cA\ten \cA^{\op}$, 
this is just the map 
\[
 \cA(X,Y)\ten_k \cA(X',Y') \to \cA(X\boxtimes X', Y\boxtimes Y')
\]
induced by the bilinearity of $\boxtimes$.
\end{proof}

\begin{definition}\label{univbialgdef}
 We say that a universal coalgebra $D$ (in the sense of \S \ref{univcoalg}) is a universal bialgebra with respect to $\boxtimes$ if is equipped with an associative multiplication $D\ten_kD \to \boxtimes_*^2D $ and unit $k \to D({\mathbbm{1}},{\mathbbm{1}})$. These are required to be compatible with the coalgebra structure, in the sense that the comultiplication and co-unit
\[
 D\to D\ten_{\cA}D \quad D \to \id_{\cA}
\]
must be morphisms of associative unital $\boxtimes^2$-algebras.
 
When $\boxtimes$ is symmetric, we say that $D$ is a universal commutative bialgebra if the multiplication $D\ten_kD \to \boxtimes_*^2D $ is commutative.
\end{definition}

\begin{remark}
 Since universal coalgebras are required to be objects of $\cD_{\dg}(\cA^{\op}\ten \cA)$, we may apply Remark \ref{boxadjoint} to rephrase the algebra structure on $D$ to be an associative multiplication $D\boxtimes^2D \to D$ and a unit ${\mathbbm{1}}\ten {\mathbbm{1}} \to D$. 
\end{remark}

\begin{example}\label{HHunivbialg}
Under the conditions of Example \ref{HHunivcoalg} (e.g. when $k$ is a field), the Hochschild complexes
\[
   \CCC(\cA,h_{\cA^{\op}} \ten h_{\cA})  \quad   N\CCC(\cA,h_{\cA^{\op}} \ten h_{\cA})  
\]
associated to the Yoneda embedding $h_{\cA^{\op}} \ten h_{\cA}  \co \cA^{\op}\ten \cA\to \C_{\dg}(\cA^{\op}\ten \cA)$ are universal bialgebras, commutative whenever $\boxtimes$ is symmetric.

The coalgebra structure is given in Example \ref{HHunivcoalg}, and the multiplication and unit are given by the formulae of Proposition \ref{bialg}.
\end{example}

\begin{lemma}\label{Calglemma}
 Given a universal bialgebra $D$ and a strong monoidal dg functor $\omega$, the dg coalgebra $C:= \omega^{\vee}\ten_{\cA}D\ten_{\cA}\omega$ becomes a unital associative dg bialgebra, which is commutative whenever $D$ is commutative and $\omega$  symmetric.
\end{lemma}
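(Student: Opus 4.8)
The plan is to transport the bialgebra data of $D$ through the functor $\omega^{\vee}\ten_{\cA}(-)\ten_{\cA}\omega$, using strong monoidality of $\omega$ to absorb the two $\boxtimes$-factors produced by the multiplication of $D$. Recall from \S\ref{tilt} that $C$ already carries a dg coalgebra structure (comultiplication induced by $\id_{\cA}\to \omega\ten_k\omega^{\vee}$ together with the comultiplication of $D$), so only the algebra structure and the bialgebra compatibility remain to be constructed.

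The key technical input is a projection formula. For $M,N \in \cD_{\dg}(\cA^{\op}\ten \cA)$ and $\omega$ strong monoidal, I claim there is a natural isomorphism
\[
 \omega^{\vee}\ten_{\cA}(M\boxtimes^2 N)\ten_{\cA}\omega \;\cong\; (\omega^{\vee}\ten_{\cA}M\ten_{\cA}\omega)\ten_k(\omega^{\vee}\ten_{\cA}N\ten_{\cA}\omega).
\]
Following the pattern of Remark \ref{boxadjoint}, I would verify this first on representable bimodules $M= h_{X^{\op}}\ten h_{X'}$, $N= h_{Y^{\op}}\ten h_{Y'}$, where by Yoneda both sides reduce to a comparison of $\omega(X\boxtimes Y)^{\vee}\ten_k\omega(X'\boxtimes Y')$ with $(\omega(X)^{\vee}\ten_k\omega(X'))\ten_k(\omega(Y)^{\vee}\ten_k\omega(Y'))$; the required isomorphism here is exactly $\mu$ together with its dual $\mu^{\vee}$ (and the symmetry of $\ten_k$). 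Since every functor involved commutes with the finite colimits and direct summands generating $\cD_{\dg}(\cA^{\op}\ten \cA)$, the formula then extends from representables to arbitrary $M,N$.

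With this in hand, I would rephrase the multiplication of $D$ via Remark \ref{boxadjoint} as a map $D\boxtimes^2 D\to D$ in $\C_{\dg}(\cA^{\op}\ten \cA)$, apply $\omega^{\vee}\ten_{\cA}(-)\ten_{\cA}\omega$, and use the projection formula with $M=N=D$ to obtain
\[
 C\ten_k C\;\xrightarrow{\ \sim\ }\;\omega^{\vee}\ten_{\cA}(D\boxtimes^2 D)\ten_{\cA}\omega\;\longrightarrow\;\omega^{\vee}\ten_{\cA}D\ten_{\cA}\omega = C,
\]
which I take to be the multiplication on $C$. The unit is built analogously from the unit $\mathbbm{1}\ten \mathbbm{1}\to D$ of $D$ together with the isomorphism $\eta\co k\xrightarrow{\sim}\omega(\mathbbm{1})$, landing in the $\mathbbm{1}$-indexed summand of $C$.

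Finally I would check the axioms. Associativity and unitality follow from the corresponding properties of $D$ (Definition \ref{univbialgdef}) combined with the associativity and unitality coherence of $\mu$ and $\eta$; commutativity, when $D$ is commutative and $\omega$ symmetric, follows from the symmetry of $\mu$. The bialgebra compatibility --- that the comultiplication and counit of $C$ are algebra maps --- is the part demanding the most care: it is deduced from the requirement in Definition \ref{univbialgdef} that the comultiplication $D\to D\ten_{\cA}D$ and counit $D\to \id_{\cA}$ be morphisms of $\boxtimes^2$-algebras, transported through $\omega^{\vee}\ten_{\cA}(-)\ten_{\cA}\omega$ and reconciled using the projection formula and the naturality of $\mu$. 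I expect this last compatibility diagram to be the main obstacle: it is essentially the bookkeeping check that the monoidal coherence of $\omega$ intertwines correctly with the $\boxtimes^2$-algebra and $\ten_{\cA}$-coalgebra structures on $D$, where the interleaving of $\ten_k$, $\ten_{\cA}$ and $\boxtimes^2$ on the two sides must be matched carefully.
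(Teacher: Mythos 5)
Your proposal is correct and follows essentially the same route as the paper: the paper's proof consists precisely of the isomorphism $\omega^{\vee}\ten_{\cA}(D\boxtimes^2 D)\ten_{\cA}\omega \cong C\ten_k C$ (your ``projection formula'', which the paper asserts directly from strong monoidality of $\omega$ and $\omega^{\vee}$), composition with the image of $D\boxtimes^2 D \to D$ to get the multiplication, the analogous construction of the unit from $\mathbbm{1}\ten\mathbbm{1}\to D$, and the statement that bialgebra compatibility is inherited from $D$. Your extra work verifying the isomorphism on representables and extending by complexes, summands and colimits is exactly the argument pattern the paper itself uses in Remark \ref{boxadjoint} and Lemma \ref{swapbox}, so it is a legitimate filling-in of detail rather than a different method.
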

\begin{proof}
 Since $\omega, \omega^{\vee}$ are strong monoidal dg functors, we have an isomorphism
\[
 \omega^{\vee}\ten_{\cA}(D\boxtimes^2 D)\ten_{\cA}\omega \cong (\omega^{\vee}\ten_{\cA}D\ten_{\cA}\omega)\ten_k  (\omega^{\vee}\ten_{\cA}D\ten_{\cA}\omega),
\]
so the multiplication $D\boxtimes^2 D \to D$ gives $C\ten_k C \to C$. Likewise, 
\[
 \omega^{\vee}\ten_{\cA}({\mathbbm{1}}\ten {\mathbbm{1}}) \ten_{\cA}\omega= \omega^{\vee}({\mathbbm{1}})\ten_k \omega({\mathbbm{1}}) \cong k,
\]
so the unit gives $k \to C$. Compatibility of the algebra and coalgebra structures follows from the corresponding results for $D$.
\end{proof}

\begin{remark}\label{delignermk}
 When $\cA$ is a neutral Tannakian category, taking duals gives an equivalence $\cA^{\op} \simeq \cA$. Then $\id_{\cA}\in \C(\cA^{\op}\ten \cA)$ corresponds to the ring of functions on Deligne's fundamental groupoid $G(\cA) \in \C(\cA\ten \cA)^{\op}$ from \cite[6.13]{droite}. Since $\id_{\cA}= \H^0(D)$, we thus think of $D$ as being the ring of functions on the path space of $\cA$.
\end{remark}

\subsubsection{Tilting modules}\label{monoidaltilt}

\begin{lemma}\label{Palglemma}
Given a universal bialgebra $D$ and a strong monoidal dg functor $\omega$, the tilting module $P:= D\ten_{\cA}\omega$  becomes a monoid in $\cD_{\dg}(\cA^{\op})$ with respect to $\boxtimes$, which is commutative whenever $D$ is commutative and $\omega$  symmetric.

Moreover, the co-action $P \to P\ten_k C$ of   \S \ref{tilt} is an algebra morphism in the sense that the diagram
\[
 \xymatrix{
  P\boxtimes P \ar[r] \ar[d] & (P\ten_kC) \boxtimes (P\ten_kC) \ar@{=}[r] & (P\boxtimes P)\ten_k(C\ten_kC)\ar[d]\\
P \ar[rr] && (P\ten_kC)    
 }     
\]
commutes, where the horizontal maps are co-action and the vertical maps are multiplication.
\end{lemma}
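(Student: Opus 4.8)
The plan is to build the multiplication on $P$ and its compatibility with the co-action by transporting the corresponding structures on the universal bialgebra $D$ along the operation $(-)\ten_{\cA}\omega$, exactly as in the proof of Lemma \ref{Calglemma}. To define the product $P\boxtimes P\to P$, note that by the adjunction of Remark \ref{boxadjoint} for $(\cA^{\op},\boxtimes)$ it is equivalent to give a map $P\ten_k P\to \boxtimes_* P$. I would take this to be the composite
\[
 P\ten_k P\cong (D\ten_k D)\ten_{\cA\ten\cA}(\omega\odot\omega)\to (\boxtimes_*^2 D)\ten_{\cA\ten\cA}(\omega\odot\omega)\cong (\boxtimes_*^2 D)\ten_{\cA\ten\cA}(\boxtimes_*\omega)\to \boxtimes_* P,
\]
where the first isomorphism is the Fubini property for $\ten_{\cA}$ (using that $\ten_k$ preserves colimits), the second arrow applies the multiplication $D\ten_k D\to \boxtimes_*^2 D$ of the universal bialgebra, the third isomorphism is the strong monoidal identification $\omega\odot\omega\cong \boxtimes_*\omega$, and the final arrow is the canonical coend comparison induced by $\boxtimes\co \cA\ten\cA\to\cA$, sending the $(Y,Y')$-summand to the $(Y\boxtimes Y')$-summand.

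The unit is induced in the same manner from the unit $\mathbbm{1}\ten\mathbbm{1}\to D$, using the isomorphism $\omega(\mathbbm{1})\cong k$ supplied by $\eta$. The whole construction is natural in $D$, and in fact exhibits $(-)\ten_{\cA}\omega$ as a lax monoidal functor from $(\cD_{\dg}(\cA^{\op}\ten\cA),\boxtimes^2)$ to $(\cD_{\dg}(\cA^{\op}),\boxtimes)$, symmetric when $\omega$ is; the product on $P$ is then simply the image of $D\boxtimes^2 D\to D$. Consequently associativity and unitality for $P$ follow formally from those of $D$, and when $D$ is commutative and $\omega$ symmetric the coend comparison and the identification $\omega\odot\omega\cong\boxtimes_*\omega$ are compatible with the symmetry, giving commutativity.

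For the second claim, recall from \S\ref{tilt} that the co-action $P\to P\ten_k C$ (with $C$ the bialgebra of Lemma \ref{Calglemma}) is the composite
\[
 D\ten_{\cA}\omega\to D\ten_{\cA}D\ten_{\cA}\omega\to (D\ten_{\cA}\omega)\ten_k(\omega^{\vee}\ten_{\cA}D\ten_{\cA}\omega)=P\ten_k C,
\]
in which the first map is the comultiplication $D\to D\ten_{\cA}D$ and the second inserts the natural transformation $\id_{\cA}\to \omega\ten_k\omega^{\vee}$ in the middle $\ten_{\cA}$-slot. By Definition \ref{univbialgdef} the comultiplication $D\to D\ten_{\cA}D$ is a morphism of $\boxtimes^2$-algebras; applying the lax monoidal functor $(-)\ten_{\cA}\omega$ to this fact shows that the first map above respects the products built in the first part. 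It then remains to check that inserting $\id_{\cA}\to \omega\ten_k\omega^{\vee}$ is likewise multiplicative, which is precisely the monoidality of this natural transformation with respect to $\boxtimes$ — the same input that makes $C$ a bialgebra in Lemma \ref{Calglemma}, and where strong monoidality of $\omega$ and $\omega^{\vee}$ enters. Composing, the co-action is an algebra map, which is the commuting square in the statement.

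The main obstacle I anticipate is bookkeeping rather than conceptual: keeping the variances straight and checking that the several coend comparison maps and strong monoidal isomorphisms fit into coherent associativity, unit and symmetry diagrams. Once the structure map of the first paragraph is pinned down, both the monoid axioms for $P$ and the square for the co-action reduce formally to the universal bialgebra axioms for $D$ transported along $(-)\ten_{\cA}\omega$.
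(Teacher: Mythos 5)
Your proposal is correct and is essentially the paper's argument: the paper likewise transports the universal bialgebra structure of $D$ along $(-)\ten_{\cA}\omega$, obtaining the multiplication from the isomorphism $(D\boxtimes^2 D)\ten_{\cA}\omega \cong (D\ten_{\cA}\omega)\boxtimes(D\ten_{\cA}\omega)$ supplied by strong monoidality, the unit from $(\mathbbm{1}\ten\mathbbm{1})\ten_{\cA}\omega\cong\mathbbm{1}$, and the compatibility square from the compatibility of the algebra and coalgebra structures on $D$. You merely work on the adjoint side of Remark \ref{boxadjoint} (building $P\ten_k P\to\boxtimes_*P$ from $D\ten_kD\to\boxtimes_*^2D$ rather than applying $D\boxtimes^2 D\to D$ directly), which is the mate of the paper's construction, and you spell out the multiplicativity of $\id_{\cA}\to\omega\ten_k\omega^{\vee}$ that the paper leaves implicit.
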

\begin{proof}
Since $\omega$ is a  strong monoidal dg functor, we have an isomorphism
\[
 (D\boxtimes^2 D)\ten_{\cA}\omega \cong (D\ten_{\cA}\omega)\boxtimes  (D\ten_{\cA}\omega),
\]
which gives the required multiplication, with existence of the unit coming from the isomorphism $({\mathbbm{1}}\ten {\mathbbm{1}})\ten_{\cA}\omega \cong {\mathbbm{1}}\ten_k \omega({\mathbbm{1}})\cong {\mathbbm{1}}$.

The final statement follows from compatibility of the algebra and coalgebra structures for $D$.
\end{proof}

\section{Comodules}\label{comodsn}

From now on, $k$ will be a field.
Throughout this section, we will fix a small $k$-linear dg category $\cA$, a $k$-linear dg  functor $\omega \co \cA \to \per_{\dg}(k)$, and a universal coalgebra $D \in \cD_{\dg}(\cA^{\op}\ten \cA)$ in the sense of \S \ref{univcoalg}. We write $C:= \omega^{\vee}\ten_{\cA}D\ten_{\cA}\omega$ and $P:= D\ten_{\cA}\omega$ for the associated dg coalgebra and tilting module.

\subsection{The Quillen adjunction}

\subsubsection{Model structure on dg comodules}

\begin{definition}
 Let $\C_{\dg}(C)$ be the dg category of right $C$-comodules in cochain complexes over $k$. Write $\C(C)$ for the underlying category $\z^0\C_{\dg}(C)$ of right $C$-comodules in cochain complexes, and $\cD(\C)$ for the homotopy category given by formally inverting quasi-isomorphisms.
\end{definition}

\begin{proposition}\label{modelcomod}
 There is a closed model structure on $\C(C)$ in which weak equivalences are quasi-isomorphisms and cofibrations are injections. Fibrations are surjections with kernel $K$ such that
\begin{enumerate}
 \item the graded module $K^{\#}$ underlying $K$ is injective as a comodule over the graded coalgebra $C^{\#}$ underlying $C$, and 
\item for all acyclic $N$, $\HHom_C(N,K)$ is acyclic.
\end{enumerate}
\end{proposition}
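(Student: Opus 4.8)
The plan is to realise the asserted structure as the injective (or ``quasi-isomorphism'') model structure on the abelian category $\C(C)$, for which conditions (1) and (2) on the kernel $K$ are exactly the assertion that $K$ is DG-injective. First I would check that $\C(C)$ is a Grothendieck abelian category. Since $k$ is a field, $C$ is flat, so the forgetful functor from $\C(C)$ to cochain complexes over $k$ is exact and creates all kernels, cokernels and colimits; thus $\C(C)$ is abelian, cocomplete, and has exact filtered colimits. A generating set is supplied by the DG analogue of the fundamental theorem of comodules: for a homogeneous element $m$ of a comodule $M$ the $C^{\#}$-subcomodule it generates is finite-dimensional by the classical argument, and since $\rho d=(d\ten\id\pm\id\ten d)\rho$, the subspace $W+dW$ attached to any finite-dimensional subcomodule $W$ is again a finite-dimensional subcomodule, now $d$-stable; hence every object of $\C(C)$ is the filtered union of its finite-dimensional DG subcomodules, and $\C(C)$ is Grothendieck, in particular locally presentable.

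Next I would produce the model structure from the pair of compatible complete cotorsion pairs $(\C(C),\,\C(C)^{\perp})$ and $(\mathcal{W},\,\mathcal{W}^{\perp})$ in $\C(C)$, where $\mathcal{W}$ is the (thick) class of acyclic comodules, $(-)^{\perp}$ denotes right $\Ext^{1}$-orthogonality, and $\C(C)^{\perp}=\mathcal{W}\cap\mathcal{W}^{\perp}$; Hovey's correspondence between abelian model structures and cotorsion pairs then applies. The first pair is complete because the Grothendieck category $\C(C)$ has enough injectives, and the second is complete because $\mathcal{W}$ is deconstructible --- generated under transfinite extensions by a set of bounded-size acyclic comodules, which again uses that $\C(C)$ is Grothendieck --- so that the small object argument produces the required factorisations. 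Hovey's theorem then delivers a model structure whose cofibrations are the monomorphisms (cokernel in the left class, all of $\C(C)$), whose weak equivalences are the quasi-isomorphisms, and whose fibrations are exactly the epimorphisms with kernel in $\mathcal{W}^{\perp}$. Existence alone could instead be extracted from Jeff Smith's recognition theorem for combinatorial model categories, taking the generating cofibrations to be the monomorphisms between suitably small comodules and using that the quasi-isomorphisms form an accessible class.

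The hard part, and the only step that is not formal, is the identification of $\mathcal{W}^{\perp}$ with the comodules $K$ satisfying conditions (1) and (2). Suppose first $K\in\mathcal{W}^{\perp}$. Testing $\Ext^{1}$-orthogonality against the contractible (hence acyclic) comodules forces the underlying graded comodule $K^{\#}$ to be injective over $C^{\#}$, which is (1). For (2), a degree-$i$ cocycle of $\HHom_{C}(N,K)$ is a comodule chain map $f\co N\to K[i]$, and the associated extension $0\to K[i]\to \cone(f)\to N[1]\to 0$ is split precisely when $f$ is null-homotopic; since $N[1]$ is acyclic we have $\Ext^{1}(N[1],K[i])=0$, so the extension splits and $f$ is a coboundary, whence $\HHom_{C}(N,K)$ is acyclic. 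Conversely, given (1) and (2) and an extension $0\to K\to E\to N\to 0$ with $N$ acyclic, condition (1) yields a splitting over $C^{\#}$, whose failure to be a cochain map is an obstruction class in $\H^{1}\HHom_{C}(N,K)$ that vanishes by (2); correcting the splitting shows $\Ext^{1}(N,K)=0$, so $K\in\mathcal{W}^{\perp}$. Matching this lifting-theoretic class to the explicit homological conditions (1)--(2) is where the genuine work lies; everything else follows formally from the Grothendieck structure established at the outset.
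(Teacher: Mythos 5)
Your argument is correct, but it takes a genuinely different route from the paper's. The paper simply follows Positselski's construction of the model structure ``of the first kind'': the lifting axioms are reduced to the vanishing of $\HHom_C(E,I)$ whenever $I$ is fibrant and either $E$ or $I$ is acyclic, and the factorisation axioms come from embedding an arbitrary comodule into the $C^{\#}$-injective bar resolution $\bigoplus_{n\ge 0}M\ten C^{\ten n+1}[-n]$, followed by a triangulated fibrant-replacement argument in which Brown representability supplies a right adjoint to the functor from the coderived category to the derived category. You instead run Hovey's abelian-model-structure correspondence on the two cotorsion pairs $(\C(C),\,\C(C)^{\perp})$ and $(\cW,\,\cW^{\perp})$, which requires showing that $\C(C)$ is Grothendieck --- your $W+dW$ argument is exactly the right DG form of the fundamental theorem of comodules --- and that the acyclics are deconstructible. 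Your identification of $\cW^{\perp}$ with conditions (1)--(2) is correct: testing against the contractible comodules induced from graded comodules (the exact disk adjoints of the forgetful functor) gives (1), the cone-splitting criterion gives (2), and the obstruction-theoretic correction of a graded splitting gives the converse; you should also record that $\cW\cap\cW^{\perp}$ coincides with the categorical injectives of $\C(C)$, as Hovey's correspondence requires, though this follows from the same two lemmas. As for what each approach buys: yours produces a combinatorial, cofibrantly generated model structure with functorial factorisations from the small object argument, and never mentions coderived categories; Positselski's route gives explicit bar-resolution fibrant approximations and keeps the connection with the coderived category and its triangulated structure. One caveat: you call the orthogonality identification ``the only step that is not formal'', but the deconstructibility of the class of acyclic DG-comodules, which you assert in a single sentence, is itself a genuine transfinite-closure (Hill-lemma) argument --- one must enlarge a small subcomodule by witnesses of acyclicity and then close under the coaction and differential, so that both the subcomodule and the quotient remain acyclic, and it is the finite-dimensionality of generated subcomodules over a field that makes this terminate. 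That step carries the set-theoretic weight of your proof and deserves more than an assertion.
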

\begin{proof}
This is described in   \cite[Remark 8.2]{positselskiDerivedCategories}, as the model structure ``of the first kind''. For ease of reference, we summarise the arguments here.

As in \cite[Theorem 8.1]{positselskiDerivedCategories}, the lifting properties follow from the statement that $\HHom_C(E,I) \simeq 0$ whenever $I$ is fibrant and either $E$ or $I$ is acyclic. For $E$, this is tautologous. For $I$, note that the identity morphism in $\HHom_C(I,I)$ is then a coboundary, so we have a contracting homotopy $h$ with $[d,h]=\id$, implying that $\HHom_C(E,I) \simeq 0$ for all $E$. 

To establish factorisation, we first observe that we can embed any comodule $M$ into a quasi-isomorphic $C^{\sharp}$-injective comodule using a bar resolution
\[
 \bigoplus_{n\ge 0} M\ten C^{\ten n+1}[-n].
\]
Fibrant replacement then follows from a triangulated argument, \cite[Lemma 1.3]{positselskiDerivedCategories}. The key step is given in \cite[Lemma 5.5]{positselskiDerivedCategories}, where Brown representability gives a right adjoint to the functor from the coderived category to the derived category.
\end{proof}

We then write $\cD_{\dg}(C)$ for  the full dg subcategory of $\C_{\dg}(C)$ on fibrant objects.

\begin{remark}\label{generalbase1}
We might sometimes wish to consider multiple dg fibre functors. Given a set $\{\omega_x\}_{x \in X}$ of dg fibre functors, we can consider the coalgebroid $C$ on objects $X$ given by $C(x,y) := \omega_x^{\vee}\ten_{\cA}D\ten_{\cA}\omega_y$, with comultiplication $C(x,y) \to C(x,z)\ten_kC(z,y)$ and counit $C(x,x) \to k$ defined by the usual formulae. 

There is also a category $\C(C)$ of right $C$-comodules in cochain complexes, with such a  comodule $M$ consisting of cochain complexes $M(x)$ for each $x \in X$, together with a distributive action $M(y) \to M(x)\ten C(x,y)$. The proof of Proposition \ref{modelcomod} then adapts to give a closed model structure on the category $\C(C)$, noting that bar resolutions 
\[
  y \mapsto \bigoplus_{x \in X^{n+1}} M(x_0)\ten C(x_0, x_1) \ten \ldots \ten C(x_{n-1}, x_n)\ten C(x_n,y)[-n]
\]
still exist in this setting.
\end{remark}

\begin{definition}
 Given a left $C$-comodule $M$ and a right $C$-comodule $N$, set the cotensor product $N\ten^CM$ to be kernel of the map
\[
(\mu_N\ten_k\id_M - \id_N\ten_k\mu_M)\co  N\ten_kM \to N\ten_kC\ten_kM,       
\]
where $\mu$ denotes the $C$-coaction. Note that this is denoted by $N\boxempty_CM$ in \cite[2.1]{positselskiDerivedCategories}.
\end{definition}

\subsubsection{The Quillen adjunction}

\begin{lemma}\label{Qlemma}
The adjunction
\[
\xymatrix@1{ \C(\cA)  \ar@<1ex>[rr]^{ -\ten_{\cA}P}  && \C(C) \ar@<1ex>[ll]^{\HHom_C(P,-) }_{\bot}      }
\]
is a Quillen adjunction.
 \end{lemma}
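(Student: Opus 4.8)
The plan is to establish the Quillen adjunction by showing that the left adjoint $-\ten_{\cA}P$ is a left Quillen functor, i.e.\ that it preserves cofibrations and trivial cofibrations. This direction is the convenient one: by Proposition~\ref{modelcomod} the cofibrations in $\C(C)$ are precisely the injections and the weak equivalences are precisely the quasi-isomorphisms, and both are detected on the underlying complexes of $k$-vector spaces after forgetting the $C$-coaction. Thus the verification reduces to two checks, namely that $-\ten_{\cA}P$ sends cofibrations of $\cA$-modules to injections of complexes, and that it preserves quasi-isomorphisms. A trivial cofibration, being a cofibration which is also a quasi-isomorphism, is then sent to an injective quasi-isomorphism, which is exactly a trivial cofibration in $\C(C)$. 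The evident comodule structure on $M\ten_{\cA}P$ induced by the coaction on $P$ plays no role in these checks, and the adjunction itself is the standard tensor--cotensor adjunction for the $(\cA^{\op},C)$-bimodule $P$.

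For the first check I would recall that the generating cofibrations of the projective model structure on $\C(\cA)$ are the maps $h_X\ten \iota$, where $X\in\cA$ and $\iota$ runs through the standard generating cofibrations of $\C_{\dg}(k)$. Each such map is a degreewise-split monomorphism of graded $\cA$-modules, the splitting being $\cA$-linear. Since degreewise-split monomorphisms are stable under pushout, transfinite composition and retract (a retract of a split monomorphism is again a split monomorphism), it follows that every cofibration in $\C(\cA)$ is a degreewise-split monomorphism of $\cA$-modules. Because $-\ten_{\cA}P$ is additive, it carries such a split monomorphism $M\hookrightarrow M'$ to the inclusion of a graded direct summand $M\ten_{\cA}P \hookrightarrow M'\ten_{\cA}P$, which is in particular injective, hence a cofibration in $\C(C)$.

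For the second check I would use that $P=D\ten_{\cA}\omega$ is cofibrant (as recorded in \S\ref{tilt}), together with the fact that cofibrant modules are \emph{homotopically flat}: $-\ten_{\cA}P$ sends acyclic $\cA$-modules to acyclic complexes. This holds for the representables $h_X$, where $-\ten_{\cA}h_X$ is evaluation at $X$ and acyclicity of $\cA$-modules is objectwise, and it is inherited by the pushouts, transfinite composites and retracts that build $P$. Since $-\ten_{\cA}P$ is additive and commutes with mapping cones, preservation of acyclics upgrades to preservation of all quasi-isomorphisms, by applying the functor to the cone of a given quasi-isomorphism. Combining the two checks shows that $-\ten_{\cA}P$ is left Quillen, which proves the lemma. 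The step needing the most care is the characterisation of cofibrations in $\C(\cA)$ as degreewise-split monomorphisms of $\cA$-modules, since it is the $\cA$-linear splitting, rather than mere injectivity, that survives the application of the (non-exact) functor $-\ten_{\cA}P$ and forces the result to be an injection; everything else is formal once the homotopical flatness of $P$ is in hand.
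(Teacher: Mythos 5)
Your proof is correct, but it takes a different route from the paper's, which is a three-line verification: the paper reduces (as you do) to the generating (trivial) cofibrations $h_X\ten_k U \to h_X\ten_k V$, then simply computes $(h_X\ten_k U)\ten_{\cA}P \cong U\ten_k P(X)$ by co-Yoneda and observes that $-\ten_k P(X)$ preserves injections and quasi-isomorphisms because $k$ is a field. In particular the paper never uses cofibrancy of $P$, and its argument for trivial cofibrations is purely about exactness of $\ten_k$ over a field. You instead prove two stronger structural facts: (i) \emph{all} cofibrations of $\cA$-modules are degreewise-split monomorphisms (a saturation argument on the source side, which an additive functor then preserves), and (ii) the cofibrant module $P$ is homotopically flat, so $-\ten_{\cA}P$ preserves \emph{all} quasi-isomorphisms, not just the generating trivial cofibrations. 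Both facts are standard and your sketches of them are sound (the coherent choice of splittings along transfinite compositions, and the cone argument upgrading preservation of acyclics to preservation of quasi-isomorphisms, are exactly right). What your route buys is robustness: the split-mono argument for cofibrations works over any base ring, and K-flatness of $P$ substitutes for exactness of $\ten_k$; what it costs is that the K-flatness of cofibrant modules, if written out, is considerably more work than the paper's direct computation on generators, which is complete as stated.
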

\begin{proof}
It suffices to show that $-\ten_{\cA}P$ sends (trivial) generating  cofibrations to (trivial) cofibrations. Generating cofibrations are of the form $X\ten_k U \to X\ten_kV$ for $X \in \cA$ and $U \into V$ finite-dimensional cochain complexes. Now, $(X\ten_kU)\ten_{\cA}P= U\ten_kP(X)$, and $\ten_kP(X)$ preserves both injections and quasi-isomorphisms.
\end{proof}

\begin{definition}
Denote the co-unit of the Quillen adjunction by 
\[
 \vareps_N \co \HHom_C(P,N)\ten_{\cA}P \to N.
\]
\end{definition}

\subsubsection{The retraction}
From now on, we assume that our chosen $\ten_{\cA}$-coalgebra $D$ is ind-compact. 

\begin{proposition}\label{retractprop}
The counit 
\[
 \vareps_N \co \oR\HHom_C(P,N)\ten_{\cA}P\to N       
\]
of the derived adjunction $ (-\ten_{\cA}P)\dashv \oR\HHom_C(P,-)$  is an isomorphism in the derived category $\cD(C)$ for all $N$. 
 \end{proposition}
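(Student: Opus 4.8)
The plan is to prove the statement for \emph{every} $N$ at once by identifying the composite functor with an iterated (co)tensor product, rather than by reducing to a generating class. We may assume $N$ is fibrant, so that $\oR\HHom_C(P,N)=\HHom_C(P,N)$. The strategy rests on three identifications: a cotensor--Hom formula expressing $\oR\HHom_C(P,-)$ as a cotensor product against the left $C$-comodule $Q=\omega^{\vee}\ten_{\cA}D$, the equality $Q\ten_{\cA}P\simeq C$ as a $(C,C)$-bicomodule, and the counit $N\ten^C C\simeq N$. Granting these, the counit factors as
\[
\oR\HHom_C(P,N)\ten_{\cA}P\;\simeq\;(N\ten^C Q)\ten_{\cA}P\;\simeq\;N\ten^C(Q\ten_{\cA}P)\;\simeq\;N\ten^C C\;\simeq\;N,
\]
and it remains only to check that this chain realises $\vareps_N$.

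First I would establish the cotensor--Hom formula. Since $P\to\omega$ is a quasi-isomorphism with each $\omega(X)\in\per_{\dg}(k)$ finite-dimensional, and $\HHom_C(P,N)(X)=\HHom_C(P(X),N)$ is computed objectwise, I can use the classical identity $\HHom_C(M,N)\cong N\ten^C M^{\vee}$ for a finite-dimensional comodule $M$ (here $M^{\vee}$ is the dual left comodule): the equaliser defining $\ten^C$ is exactly $\HHom_k(M,N)$ cut out by the two coactions. Assembling over $X$ gives $\oR\HHom_C(P,N)\simeq N\ten^C P^{\vee}\simeq N\ten^C Q$, using that $Q=\omega^{\vee}\ten_{\cA}D\simeq\omega^{\vee}\simeq P^{\vee}$ because the counit $D\to\id_{\cA}$ is a quasi-isomorphism. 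Next, the identity $Q\ten_{\cA}P\simeq C$ follows from
\[
Q\ten_{\cA}P=\omega^{\vee}\ten_{\cA}D\ten_{\cA}D\ten_{\cA}\omega\;\xra{\ \id\ten\eps\ten\id\ }\;\omega^{\vee}\ten_{\cA}D\ten_{\cA}\omega=C,
\]
the middle arrow being a quasi-isomorphism as $\eps\co D\to\id_{\cA}$ is one and $D$ is cofibrant. The Fubini commutation $(N\ten^C Q)\ten_{\cA}P\simeq N\ten^C(Q\ten_{\cA}P)$ holds because $-\ten_{\cA}P$ is exact (as $P$ is cofibrant) and operates on the $\cA$-variable, which is independent of the $C$-cotensor; finally $N\ten^C C\simeq N$ is the counit of the cofree/forgetful structure, valid since $C$ is coflat over itself.

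The step I expect to be the main obstacle is controlling the \emph{derived} behaviour of the cotensor products: I must know that for fibrant $N$ the underived expressions $N\ten^C P(X)^{\vee}$ and $N\ten^C C$ genuinely compute $\ten^C_{\oL}$, i.e. that a finite-dimensional comodule is coflat enough against fibrant objects and that $\ten^C C$ is exact, so that no spurious higher derived terms appear. This is precisely where the hypothesis that $D$ is ind-compact is used: writing $D=\LLim_i D_i$ with each $D_i$ a compact $\ten_{\cA}$-coalgebra yields $C=\LLim_i C_i$ with $C_i=\omega^{\vee}\ten_{\cA}D_i\ten_{\cA}\omega$ finite-dimensional, so that $C$ is a filtered union of finite-dimensional subcoalgebras. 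The fundamental theorem of coalgebras then applies, every comodule is a union of its finite-dimensional subcomodules, and the cotensor products against fibrant comodules are exact, which legitimises each $\simeq$ above. Once these derived identifications are in place, a naturality check identifies the composite isomorphism with $\vareps_N$, completing the proof.
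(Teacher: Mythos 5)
Your argument has a genuine gap at its first and most important step, the cotensor--Hom formula $\oR\HHom_C(P,N)\simeq N\ten^C Q$. The classical identity $\HHom_C(M,N)\cong N\ten^C M^{\vee}$ is valid only for \emph{finite-dimensional} comodules $M$, and you apply it to $M=P(X)$, which is an infinite-dimensional bar-type resolution (e.g.\ the Hochschild complex $\CCC(\cA,h_X\ten\omega)$) and is merely quasi-isomorphic to the finite-dimensional complex $\omega(X)$. For infinite-dimensional $M$ the identity is not just false but ill-posed: the full linear dual of an infinite-dimensional right $C$-comodule is a module over the algebra $C^{\vee}$, not a left $C$-comodule, so ``$N\ten^C P^{\vee}$'' does not typecheck; at best one gets the finite-rank colinear maps $(N\ten_k M^{\vee})\cap\HHom_C(M,N)$, a proper subcomplex of $\HHom_C(M,N)$ in general. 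Nor can you evade this by passing to $\omega$: the quasi-isomorphism $P\to\omega$ is not a map of $C$-comodules, because the coaction on $P$ is built from the comultiplication of $D$ and does not descend to $\omega$ (at best $\omega$ is a comodule over the underived coalgebra $\omega^{\vee}\ten_{\cA}\omega$, not over $C$), so $\HHom_C(\omega(X),N)$ makes no sense. The honest form of your step 1 is that the pairing $Q\ten_{\cA}P\to C$ induces a quasi-isomorphism $N\ten^C Q\to\HHom_C(P,N)$ for fibrant $N$; that statement is essentially as hard as the proposition itself, and you give no argument for it once the finite-dimensional identity is unavailable.

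Relatedly, you have misidentified the role of ind-compactness. Over a field, \emph{every} dg coalgebra is a filtered union of finite-dimensional subcoalgebras (the fundamental theorem of coalgebras, as the paper itself remarks), so the use you describe is vacuous. Its actual role is that $P=\LLim_i P_i$ with each $P_i$ compact as an object of $\cD_{\dg}(\cA^{\op})$, and this is precisely the finiteness that repairs your step 1: the paper never identifies $\oR\HHom_C(P,N)$ by itself (which would require an ill-behaved inverse limit over the $P_i$), but only after tensoring with $P$, where the colimit becomes filtered, writing $\HHom_C(P,N)\ten_{\cA}P\cong\LLim_i\HHom_C(P_i'\ten_{\cA}P,N)$ via the preduals $P_i'$, then producing bicomodule maps $\alpha_i\co C_i^{\vee}\to P_i'\ten_{\cA}P$ split by quasi-isomorphisms $\beta_i\co P_i'\ten_{\cA}P\to C_i^{\vee}$, and finally applying $\HHom_C(-,N)$, which preserves quasi-isomorphisms for fibrant $N$. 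Note that the finite-dimensional cotensor--Hom identity is applied there only to the genuinely finite-dimensional $C_i$, never to $P$. Your remaining steps (the Fubini exchange, which holds because $-\ten_{\cA}P$ is exact for cofibrant $P$, and $N\ten^C C\cong N$, which is an isomorphism on the nose) are fine, and the paper's argument can indeed be read as a rigorous ``tensored'' version of your chain; but as written your proof does not establish its first link.
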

\begin{proof}
For any $C$-comodule $N$, we have the following isomorphisms
\begin{align*}
 \HHom_C(P,N)\ten_{\cA}P&= \HHom_C(P,N)\ten_{\cA}(\LLim_iP_i)\\
&\cong \LLim_i(\HHom_C(P,N)\ten_{\cA}P_i)\\
&\cong \LLim_i\HHom_{\cA}(P_i', \HHom_C(P,N))\\
&\cong \LLim_i\HHom_C(P_i'\ten_{\cA}P,N),
\end{align*}
where $P_i'$ is the predual of $P_i$, as in Definition \ref{predualdef}.

The co-unit $\vareps_N$ induces $C$-comodule morphisms
\[
\HHom_C(P_i'\ten_{\cA}P,N)= \HHom_C(P,N)\ten_{\cA}P_i \to N
\]
for all $i$, and hence $C_i$-comodule morphisms
\[
 \HHom_C(P_i'\ten_{\cA}P,N)\to N\ten^CC_i. 
\]

Now, since $C_i$ is finite-dimensional, we have $ N\ten^CC_i\cong \HHom_C(C_i^{\vee},N)$, so $\vareps$ induces $C$-bicomodule morphisms
\[
\alpha_i\co C_i^{\vee}\to P_i'\ten_{\cA}P,
\]
compatible with the transition maps $C_i \to C_j$, $P_i \to P_j$. 

As $P_i'$ is cofibrant, the quasi-isomorphism $P \to \omega$ induces a quasi-isomorphism $\beta_i \co P_i'\ten_{\cA}P\to P_i'\ten_{\cA}\omega= C_i^{\vee}$ of cochain complexes. Now, $\alpha_i$ is equivalent to the coaction map $P_i \to P_i\ten_kC_i\to P\ten_kC_i$, so $\beta_i \circ \alpha_i$ is equivalent to the coaction map $P_i \to \omega\ten_kC_i$. This is equivalent to the isomorphism $P_i'\ten_{\cA}\omega= C_i^{\vee}$, so $\beta_i \circ \alpha_i$ is the identity.

Therefore the  $\alpha_i$ are all quasi-isomorphisms, so for $N$ fibrant, the map
\[
 \HHom_C(P,N)\ten_{\cA}P_i \to N\ten^CC_i
\]
is a quasi-isomorphism. Since filtered colimits commute with finite limits, this gives a quasi-isomorphism
\begin{align*}
 \HHom_C(P,N)\ten_{\cA}P &= \LLim_i\HHom_C(P,N)\ten_{\cA}P_i\\
&\to \LLim_i N\ten^CC_i\\
&\cong N\ten^C(\LLim_iC_i )\\
&= N\ten^CC=N.
\end{align*}
\end{proof}


\subsection{Tannakian comparison}

We now show how for our chosen ind-compact universal coalgebra $D \in \cD_{\dg}(\cA^{\op}\ten \cA)$ and dg functor $\omega \co \cA \to \per_{\dg}(k)$, 
the tilting module $P= D\ten_{\cA}\omega$ can give rise to a comparison between the derived category of $\cA$-modules and the derived category of comodules of $C=\omega^{\vee}\ten_{\cA}D\ten_{\cA} \omega $. This is analogous to derived Morita theory (comparing two derived categories of modules) or Morita--Takeuchi theory (comparing two derived categories of comodules).


\begin{definition}
 Write $\ker \omega$ for the full dg subcategory of  $\cD_{\dg}(\cA)$ consisting of objects $X$ with $\omega(X):=X\ten_{\cA}\omega$ quasi-isomorphic to $0$.   

Recall from \cite[\S 12.6]{drinfeldDGQuotient}  that the right orthogonal complement $(\ker\omega)^{\perp}\subset  \cD_{\dg}(\cA)$ is the full dg subcategory consisting  of those $X$ for which $\HHom_{\cA^{\op}}(M,X) \simeq 0$ for all $M \in \ker \omega$.
\end{definition}

The following theorem contrasts strongly with \cite{ChuangLazarevMannan}, which shows that if we work with coderived categories of comodules instead of derived categories, and take a specific model for $C$, then we would not need to take the dg quotient by $\ker \omega$. As we will see, there are many applications in which the quotient is more interesting than the original dg category.

\begin{theorem}\label{tannakathm}
For the constructions of $C \simeq \omega^{\vee}\ten_{\cA}^{\oL}\omega$ and the tilting module $P$  above,
the derived adjunction $(-\ten_{\cA}P) \dashv \oR\HHom_C(P,-)$ gives rise to a quasi-equivalence between the dg categories $(\ker\omega)^{\perp}$ and  $\cD_{\dg}(C)$. Moreover, the map $(\ker\omega)^{\perp}\to \cD_{\dg}(\cA)/(\ker \omega)$  to the dg quotient is a quasi-equivalence.   
\end{theorem}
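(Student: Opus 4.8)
The plan is to recognise the second quasi-equivalence as the general statement that a reflective dg subcategory coincides with the Drinfeld quotient by the kernel of its reflection. By the first assertion of the theorem, the derived adjunction $(-\ten_\cA P)\dashv \oR\HHom_C(P,-)$ identifies $(\ker\omega)^\perp$ with $\cD_{\dg}(C)$; equivalently, Proposition \ref{retractprop} shows the counit $\oR\HHom_C(P,-)\ten_\cA P \to \id$ is an isomorphism, so $\oR\HHom_C(P,-)$ is fully faithful and the inclusion $(\ker\omega)^\perp \hookrightarrow \cD_{\dg}(\cA)$ admits a left adjoint $L$ with unit $\eta\co \id \to L$. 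Since $P \simeq \omega$, the functor $-\ten_\cA P$ agrees with $\omega(-)$ and has kernel $\ker\omega$; as $-\ten_\cA P$ inverts $\eta$, the cone $\cone(\eta_X)$ lies in $\ker\omega$ for every $X$. This exhibits the situation as a Bousfield localisation of the pre-triangulated dg category $\cD_{\dg}(\cA)$ along $\ker\omega$, and it is this localisation structure, rather than $C$ itself, that drives the argument.

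Next I would deduce the equivalence at the level of homotopy categories. The localisation functor $\pi\co \cD_{\dg}(\cA) \to \cD_{\dg}(\cA)/(\ker\omega)$ inverts $\eta_X$ because $\cone(\eta_X)\in\ker\omega$, so $\pi(X)\cong\pi(LX)$ with $LX \in (\ker\omega)^\perp$; this gives essential surjectivity of the composite $(\ker\omega)^\perp \hookrightarrow \cD_{\dg}(\cA)\xra{\pi}\cD_{\dg}(\cA)/(\ker\omega)$ on $\H^0$. For full faithfulness on $\H^0$, morphisms in the Verdier quotient $\cD(\cA)/\ker\omega$ from $X$ to $Y$ are computed by roofs $X \xla{s} X'$ with $\cone(s)\in\ker\omega$; applying $\Hom_{\cD(\cA)}(-,Y)$ to the triangle $X'\to X\to\cone(s)$ and using $Y \in (\ker\omega)^\perp$ (so $\Hom(\cone(s)[i],Y)=0$ by shift-closedness) shows each $s$ induces an isomorphism $\Hom(X,Y)\xra{\sim}\Hom(X',Y)$, whence the filtered colimit over roofs collapses to $\Hom_{\cD(\cA)}(X,Y)$.

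The remaining, and principal, difficulty is to upgrade full faithfulness from $\H^0$ to a quasi-isomorphism of the morphism complexes of the dg quotient. Concretely, for $X,Y\in(\ker\omega)^\perp$ one must show $\HHom_{\cD_{\dg}(\cA)/(\ker\omega)}(X,Y)\simeq\HHom_{\cD_{\dg}(\cA)}(X,Y)$, i.e. that a right-orthogonal object is its own local replacement in the Drinfeld quotient, not merely up to $\H^0$. I would handle this by using the reflection $L$ supplied by the adjunction as an explicit local-replacement functor: the construction of $\cD_{\dg}(\cA)/(\ker\omega)$ in \cite{drinfeldDGQuotient} presents the morphism complexes through a bar-type diagram built from $\ker\omega$, and the triangles $\cone(\eta)\to(-)\to L(-)$, with cones in $\ker\omega$, provide compatible quasi-isomorphisms at each stage; since $Y\in(\ker\omega)^\perp$ is already local (the unit $\eta_Y$ being an isomorphism), this diagram degenerates to $\HHom_{\cD_{\dg}(\cA)}(X,Y)=\HHom_{(\ker\omega)^\perp}(X,Y)$. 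Establishing that the diagram genuinely degenerates at the level of complexes — rather than only computing the correct cohomology — is the crux, and is precisely where the existence of the reflection (Proposition \ref{retractprop}) is indispensable: it guarantees the local replacements exist inside $(\ker\omega)^\perp$ and makes the local-replacement tower contract onto $Y$.
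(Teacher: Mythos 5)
Your overall architecture matches the paper's --- Proposition \ref{retractprop} gives the counit isomorphism, the cones of the units lie in $\ker\omega$, and the resulting right admissibility (Bousfield localisation) of $\ker\omega$ yields the quotient statement --- but two steps are genuinely missing. First, the reflective structure you start from is asserted circularly: you appeal to ``the first assertion of the theorem'' to identify $(\ker\omega)^{\perp}$ with $\cD_{\dg}(C)$, but that assertion is part of what must be proved, and full faithfulness of $\oR\HHom_C(P,-)$ (which does follow from the counit isomorphism) does not by itself determine the essential image. The missing ingredient, which the paper supplies in one line, is that $\oR\HHom_C(P,N)$ lies in $(\ker\omega)^{\perp}$ for every $N$: for $K \in \ker\omega$ one has $\oR\HHom_{\cA}(K, \oR\HHom_C(P,N)) \simeq \oR\HHom_C(K\ten_{\cA}P,N) \simeq \oR\HHom_C(0,N)\simeq 0$, since $K\ten_{\cA}P \simeq \omega(K) \simeq 0$ (all objects of $\ker\omega$ being cofibrant and $P$ a resolution of $\omega$). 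Combined with your cone-of-unit observation --- the cone of $M \to UFM$ lies in $\ker\omega$, and for $M \in (\ker\omega)^{\perp}$ it also lies in $(\ker\omega)^{\perp}$, hence is contractible, so the unit is a quasi-isomorphism there --- this proves the first assertion and legitimately produces the reflection $L$ you need.

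Second, the ``principal difficulty'' of your final paragraph is a phantom, and your proposed resolution of it is not a proof. Since $\cD_{\dg}(\cA)$ and $\ker\omega$ are pretriangulated, so are $(\ker\omega)^{\perp}$ and the dg quotient $\cD_{\dg}(\cA)/(\ker\omega)$, and Drinfeld's theorem identifies $\H^0$ of the dg quotient with the Verdier quotient; hence $\H^n\HHom(X,Y) \cong \Hom_{\H^0}(X,Y[n])$ on both sides, and your $\H^0$ roof argument, applied to all shifts $Y[n]$, already gives the quasi-isomorphism of morphism complexes. There is nothing further to ``degenerate'', and the sketch you offer --- that Drinfeld's bar-type presentation of the morphism complexes contracts onto $\HHom_{\cD_{\dg}(\cA)}(X,Y)$ --- is unsubstantiated: those complexes do not literally collapse, one only computes their cohomology. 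The paper bypasses all of this by quoting \cite[\S 12.6]{drinfeldDGQuotient}, where right admissibility is shown (via \cite{BondalKapranovRepSerre} and \cite{verdierCatDer}) to be equivalent to $(\ker\omega)^{\perp} \to \cD_{\dg}(\cA)/(\ker\omega)$ being an equivalence; once right admissibility is established, you are done.
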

\begin{proof}
Functorial cofibrant and fibrant replacement give us composite dg functors $U\co \cD_{\dg}(C) \xra{\HHom_C(P,-) } \C_{\dg}(\cA) \to \cD_{\dg}(\cA)$ and $F\co \cD_{\dg}(\cA) \xra{\ten_{\cA}P} \C_{\dg}(C) \to \cD_{\dg}(C)$, and these will yield the quasi-equivalence.  

First observe that for $K \in \ker \omega$, we have  quasi-isomorphisms
\[
 K\ten_{\cA}P  \simeq \omega(K) \simeq 0      
\]
of cochain complexes, since $P$ is a resolution of $\omega$ and $K$ is cofibrant.

For any $N \in \cD_{\dg}(C)$, Proposition \ref{retractprop} gives that the counit 
\[
 \vareps_N \co \HHom_C(P,N)\ten_{\cA}P\to N       
\]
of the adjunction $ (-\ten_{\cA}P)\dashv \HHom_C(P,-)$  is a quasi-isomorphism. Thus for any $K \in \ker \omega$, we have
\[
 \oR\HHom_{\cA}(K, \oR\HHom_C(P,N))\simeq \oR\HHom_C(K\ten_{\cA}P,N) \simeq \oR\HHom_C(0,N)\simeq 0, 
\]
so $UN$ (the cofibrant replacement of   $\oR\HHom_C(P,N)$) lies in $(\ker\omega)^{\perp}$. 

Thus $F$ provides a retraction of $(\ker \omega)^{\perp}$ onto $\cD_{\dg}(C)$, and in particular $U\co \cD_{\dg}(C) \to (\ker \omega)^{\perp}$ is a full and faithful dg functor.

For any $M \in \cD_{\dg}(\cA)$, we now consider the unit
\[
 \eta_M \co M \to \oR\HHom_C(P,M\ten_{\cA}P)= \HHom_C(P, FM)
\]
of the adjunction. On applying $\ten_{\cA}P$, this becomes a quasi-isomorphism, with quasi-inverse $\vareps_{M\ten_{\cA}P}$, so $\omega\ten_{\cA}^{\oL}(\eta_M)$ is a quasi-isomorphism. Since $M$ is cofibrant, the map $\eta_M$ lifts to a map 
\[
 \tilde{\eta}_M \co M \to UFM
\]
of cofibrant objects, with
 \[
 \cone(\tilde{\eta}_M) \in \ker \omega, \quad \text{ i.e. } F\cone(\tilde{\eta}_M)\simeq 0.
\]
 
The dg subcategory $\ker \omega$ is thus right admissible in the sense of  
\cite[\S 12.6]{drinfeldDGQuotient}, because we have the morphism
\[
 M \xra{\tilde{\eta}_M} UFM
\]
for all $M \in \cD_{\dg}(\cA)$, with $UFM \in (\ker\omega)^{\perp} $ and $\cocone(\tilde{\eta}_M)\in  \ker \omega$.

In particular, this implies that if $M \in (\ker\omega)^{\perp}$, the map $\tilde{\eta}_M \co M \to UFM$ is a quasi-isomorphism, so $U\co \cD_{\dg}(C) \to (\ker \omega)^{\perp}$ is essentially surjective and hence a quasi-equivalence.

As observed in \cite[\S 12.6]{drinfeldDGQuotient}, the results of  \cite[\S 1]{BondalKapranovRepSerre} and \cite[\S I.2.6]{verdierCatDer} show that
right admissibility is equivalent to saying that $(\ker\omega)^{\perp}\to \cD_{\dg}(\cA)/(\ker \omega) $ is an equivalence.
\end{proof}

\begin{remark}\label{uniquenessrmk}
Note that Theorem \ref{tannakathm} implies that for any choices $D,D'$ of ind-compact $\ten_{\cA}$-coalgebra resolution of $\id_{\cA}$, the associated coalgebras  $C,C'$ are derived Morita equivalent. Given a quasi-isomorphism $D\to D'$, we then have a derived Morita equivalence $C \to C'$, which is \emph{a fortiori} a quasi-isomorphism.  

It might therefore seem curious that   $D \to \id_{\cA}$ is only required to be a quasi-isomorphism. However, any quasi-isomorphism to the trivial coalgebra $\id_{\cA}$ is automatically a Morita equivalence. The reason for this is that fibrant replacement in the category of $D$-comodules is given by the coaction $M \to M\ten D$, so the forgetful dg functor from $D$-comodules to $\id_{\cA}$-comodules is a quasi-equivalence.
\end{remark}

\begin{remark}\label{cfayoub0}
In \cite{ayoubGaloisMotivic1}, Ayoub establishes a weak Tannaka duality result for any monoidal functor $f\co \cM \to \cE$ of monoidal  categories equipped with a (non-monoidal) right adjoint $g$. He sets $H:= fg(\mathbbm{1})$,  shows (Theorem 1.21) that  $H$ has the natural structure of a biunital bialgebra, and then  (Propositions 1.28 and 1.55) proves that $f$ factors through the category of $H$-comodules, and that $H$ is universal with this property.

We may compare this with our setting by taking $\cM= \cD(\cA)$ and $\cE= \cD(k)$, the derived categories of $\cA$ and $k$. In this case, Ayoub's formula for the coalgebra underlying $H$ is defined provided $\cA$ and $f$ are $k$-linear, without requiring that $\cD(\cA)$ be monoidal. 

We can take $f$ to be $\ten_{\cA}^{\oL}\omega$, which has right adjoint $\oR\HHom_{k}(\omega,-)$ (with the same reasoning as Lemma \ref{Qlemma}). Thus 
\[
 H\simeq \oR\HHom_{k}(\omega,k) \ten_{\cA}^{\oL}\omega= \omega^{\vee}\ten_{\cA}^{\oL}\omega,
\]
which is the image  $[C] \in \cD(k)$ of our dg coalgebra $C \in \C_{\dg}(k)$.

One reason our duality results in Theorem \ref{tannakathm}  give a comparison rather than just universality is that we use the dg category of $C$-comodules in $\C_{\dg}(k)$. Instead, \cite[Proposition 1.55]{ayoubGaloisMotivic1} just looks at $C$-comodules in the derived category $\cD(k)$ --- in other words, (weak) homotopy comodules without higher coherence data. Likewise, his bialgebra $H$ is only defined as a (weak) homotopy bialgebra.

To recover Ayoub's weak universality from  Theorem \ref{tannakathm} in this setting, first observe that there is a forgetful functor from $\cD(C)$ to the category $\Co\Mod([C])$ of $[C]$-comodules in $\cD(k)$. The equivalence $\cD(\cA)/(\ker \omega)\simeq \cD(C)$ then ensures that $\omega \co \cD(\cA) \to \cD(k)$ factors through   $\Co\Mod([C])$. Likewise, if $\omega \co \per(\cA) \to \cD(k)$ factored through $\Co\Mod(B)$ for some other coalgebra $B \in  \cD(k)$,  Theorem \ref{tannakathm} would give an exact functor $\cD(C) \to \Co\Mod(B)$ fibred over $\cD(k)$. The image of $C$ would be a $B$-comodule structure on $[C] \in \cD(k)$,  compatible with the coalgebra structure of $[C]$ via the image of the comultiplication $C\ten C \to C$, giving a morphism  $[C] \to B$ of coalgebras in $\cD(k)$.

In \cite[Theorem 4.14]{iwanariTannakization}, Iwanari effectively gives a refinement of Ayoub's Tannaka duality. Starting from a monoidal $\infty$-functor of stable $\infty$-categories (a generalisation of dg categories), he constructs a derived affine group scheme $G$ (thus incorporating higher coherence data), and shows that its $\infty$-category $\Rep(G)$ of representations has a universal property, but without  
the characterisation $\Rep(G) \simeq (\ker\omega)^{\perp}$ of Theorem \ref{tannakathm} above --- this characterisation will be essential in our comparisons of categories of motives in Remark \ref{motayoub1}, and in Example
\ref{mothomb} and \cite[Example \ref{HHtannaka2-mothom2}]{HHtannaka2}.
\end{remark}

\subsection{Tannakian equivalence}

When $\omega$ is faithful, we now have statements about the 
 idempotent-complete pre-triangulated envelope  $\per_{\dg}(\cA)$ of  $\cA$, and its closure $\cD_{\dg}(\cA)=\ind(\per_{\dg}(\cA) )$  under filtered colimits:
\begin{corollary}\label{tannakacor}
Assume that  $\omega \co \cD_{\dg}(\cA) \to \C_{\dg}(k)$ is faithful in the sense that $\ker \omega$ is the category of acyclic $\cA$-modules, and take the tilting module $P$ and dg coalgebra $C \simeq \omega^{\vee}\ten_{\cA}^{\oL}\omega$ as above.
Then  the derived adjunction $(-\ten_{\cA}P) \dashv \oR\HHom_C(P,-)$ of Theorem \ref{tannakathm} gives rise to a quasi-equivalence between the dg categories $\cD_{\dg}(\cA)$ and  $\cD_{\dg}(C)$.

Moreover, the idempotent-complete pre-triangulated category $\per_{\dg}(\cA)$ generated by $\cA$ is quasi-equivalent  to the full dg subcategory of $\cD_{\dg}(C)_{\cpt}\subset  \cD_{\dg}(C)$ on objects which are compact in $\cD(C)$. If $\cA$ is Morita fibrant, this gives a quasi-equivalence $\cA \simeq \cD_{\dg}(C)_{\cpt}$.
\end{corollary}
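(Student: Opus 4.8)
The plan is to deduce all three assertions from Theorem \ref{tannakathm}, of which the corollary is essentially a specialisation: once the quotient in the theorem is identified, the only remaining work is to track compact objects.

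First I would treat the faithful case by identifying the right orthogonal complement. Since every object of $\cD_{\dg}(\cA)$ is cofibrant, an acyclic object there is contractible, so the hypothesis that $\ker\omega$ is the category of acyclic $\cA$-modules says precisely that $\ker\omega$ consists of contractible modules. For a contractible $M$ and arbitrary $X$ one has $\HHom_{\cA^{\op}}(M,X)\simeq 0$, whence $(\ker\omega)^{\perp}=\cD_{\dg}(\cA)$. Theorem \ref{tannakathm} then reads directly as a quasi-equivalence $\cD_{\dg}(C)\simeq (\ker\omega)^{\perp}=\cD_{\dg}(\cA)$ (and, consistently, exhibits the Drinfeld quotient $\cD_{\dg}(\cA)/(\ker\omega)$ as recovering $\cD_{\dg}(\cA)$, exactly as one expects on quotienting by contractibles). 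This establishes the first assertion.

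Next I would pass to compact objects. The quasi-equivalence just obtained induces an equivalence of the underlying triangulated homotopy categories $\cD(\cA)\simeq \cD(C)$, and any equivalence of triangulated categories both preserves and reflects compactness, since compactness --- the preservation of filtered colimits by $\HHom(X,-)$ --- is intrinsic to the triangulated structure. By Definition \ref{perdef} the compact objects of $\cD(\cA)$ are precisely those of $\per_{\dg}(\cA)$, while by definition the compact objects of $\cD(C)$ are those of $\cD_{\dg}(C)_{\cpt}$. As both $\per_{\dg}(\cA)$ and $\cD_{\dg}(C)_{\cpt}$ are the full dg subcategories on these corresponding objects, restricting the ambient quasi-equivalence to them yields the quasi-equivalence $\per_{\dg}(\cA)\simeq \cD_{\dg}(C)_{\cpt}$.

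Finally, for the Morita fibrant case I would invoke the characterisation recalled in \S\ref{univcoalg}: $\cA$ is Morita fibrant exactly when the Yoneda embedding $\cA\to \per_{\dg}(\cA)$ is a quasi-equivalence. Composing this with the quasi-equivalence of the preceding paragraph gives $\cA\simeq \cD_{\dg}(C)_{\cpt}$. The only point demanding genuine care is the preservation of compactness in the second step; but this is formal once the comparison is known to be an honest quasi-equivalence of dg categories rather than merely an equivalence of derived categories, so all the substantive difficulty has already been absorbed into Theorem \ref{tannakathm}.
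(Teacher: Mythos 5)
Your first and third steps coincide with the paper's proof: acyclic objects of $\cD_{\dg}(\cA)$ are cofibrant, hence contractible, so $(\ker\omega)^{\perp}=\cD_{\dg}(\cA)$ and Theorem \ref{tannakathm} gives the first assertion; and Morita fibrancy means exactly that $\cA\to\per_{\dg}(\cA)$ is a quasi-equivalence, giving the last. The gap is in your compactness step. You justify transferring compactness across the equivalence by asserting that compactness --- ``the preservation of filtered colimits by $\HHom(X,-)$'' --- is intrinsic to the triangulated structure. That is not correct as stated: filtered colimits are not part of a triangulated structure, and $\cD(\cA)$ and $\cD(C)$ do not in general admit filtered colimits as honest categorical colimits. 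The compactness of Definition \ref{perdef}, which you invoke, is computed at the level of the dg enhancements (strict filtered colimits of cofibrant modules, resp.\ of comodules), and this notion is not visibly preserved by a bare equivalence of homotopy categories. Nor is your fallback --- that the transfer is ``formal'' once one has a quasi-equivalence of dg categories --- adequate, since quasi-equivalences need not preserve strict colimits; making that line of argument precise requires identifying the strict filtered colimits on both sides with homotopy colimits, which is additional work you do not do. What \emph{is} categorically intrinsic is coproduct-compactness, but trading one notion for the other requires the Keller--Neeman identification on the module side and is simply unavailable a priori on the comodule side, where $\cD(C)$ is Positselski's derived category of the first kind.

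The paper closes exactly this gap with one observation you omit: the equivalence is implemented by the specific dg functor $-\ten_{\cA}P$, which is a left adjoint at the strict level (Lemma \ref{Qlemma}) and therefore preserves filtered colimits; a quasi-equivalence preserving filtered colimits preserves and reflects compact objects in the sense of Definition \ref{perdef}, so $\H^0\per_{\dg}(\cA)$, being the full subcategory of compacts in $\cD(\cA)$, is carried onto the compacts of $\cD(C)$. Inserting this observation repairs your argument with no other changes; without it, the step you dismiss as formal is precisely the one the corollary's proof has to supply.
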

\begin{proof}
 Since  $\ker \omega$ consists only of acyclic modules, $(\ker\omega)^{\perp}= \cD_{\dg}(\cA)$, and we apply Theorem \ref{tannakathm}. For the second part, note that 
the quasi-equivalence $-\ten_{\cA}P\co \cD_{\dg}(\cA)\to \cD_{\dg}(C)$ preserves filtered colimits, so $-\ten_{\cA}P\co \cD(\cA)\to \cD(C) $ preserves and reflects compact objects. Since $\H^0\per_{\dg}(\cA)\subset \cD(\cA)$ is the full subcategory on compact objects, the same must be true of its image in $\cD(C)$. Finally, if $\cA$ is Morita fibrant, then $\cA \to \per_{\dg}(\cA)$ is a quasi-equivalence.
\end{proof}

\begin{example}\label{kellerex}
Let $A=k[\eps]$ with $\eps^2=0$ (the dual numbers), and let $\omega$ be the $A$-module $k=k[\eps]/\eps$. This is faithful because for any cofibrant complex $M$ of $A$-modules, we have $\omega(M)= M/\eps M$, and a short exact sequence
\[
 0 \to \omega(M) \xra{\eps} M \to \omega(M) \to 0.
\]

A model for the cofibrant dg $\ten_A$-coalgebra $D$ in $A$-bimodules is given by 
\[
 D^{-n}= A\ten (k\xi_n)\ten A
\]
for $n \ge 0$,
with comultiplication given by 
\[
 \Delta(a\ten \xi_n\ten b)= \sum_{i +j=n} a\ten \xi_i \ten 1 \ten \xi_j \ten b \in A\ten (k \xi_i)\ten A \ten (k \xi_j) \ten A, 
\]
and counit $a\ten \xi_0 \ten b \mapsto ab \in A$. The differential is determined by
\[
  d(1 \ten \xi_1\ten 1)=(\eps \ten \xi_0 \ten 1) -(1\ten \xi_0 \ten \eps).      
\]

We therefore get $C:= k\ten_AD\ten_Ak = k\<\xi\>$, the free dg coalgebra on generator $\xi=\xi_1$ in degree $-1$, with $d\xi=0$. The tilting module $P$ is given by  $A\<\xi\>$, with left multiplication by $A$, right comultiplication by $C$, and $d\xi= \eps$. 

Thus the dg category of cofibrant $A$-modules is equivalent to the dg category of fibrant $k\<\xi\>$-comodules. Contrast this with  \cite[Example 2.5]{kellerHCexact}, which uses the tilting module $P$ to give a derived Morita equivalence between the category of all finitely generated $A$-modules and the dg category of perfect $k\<\xi\>^{\vee}$-modules.
\end{example}

\begin{remark}\label{generalbase}
 If we have a finite set $\{\omega_x\co \cA \to \per_{\dg}(k)\}_{x \in X}$ of dg fibre functors, we can form a dg coalgebroid on objects $X$ by $C(x,y)= \omega_x\ten_{\cA}D\ten_{\cA}\omega_y$, and then Theorem \ref{tannakathm}  adapts to give an equivalence between dg $C$-comodules and $(\bigcap_{x \in X} \ker \omega_x)^{\perp}$, using Remark \ref{generalbase1}. When the $\omega_x$ are jointly faithful, Corollary \ref{tannakacor} will thus adapt to give a quasi-equivalence between $\cD_{\dg}(\cA)$ and $\cD_{\dg}(C)$.

 Beware that if we had infinitely many dg fibre functors, the proof of Theorem \ref{tannakathm} would no longer adapt, because the expression $N\ten^CC_i$ in the proof of Proposition \ref{retractprop} would then be an infinite limit.
 
This also raises the question of a generalisation to faithful dg fibre functors $\omega \co \cA \to \C$ to more general categories. The obvious level of generality would replace $\per_{\dg}(k)$ with some rigid tensor category $\C$ over $k$.
In order to proceed further, we would need an extension of Theorem \ref{tannakathm} to deal with $\C$-coalgebras. In particular, generalisations would be required of the relevant model structures on comodules in \cite[8.2]{positselskiDerivedCategories}.
\end{remark}

\subsection{Homotopy invariance}\label{funsn}

\begin{corollary}\label{quasiequivcor}
 Given a $k$-linear dg functor  $\omega \co \cA \to \per_{\dg}(k)$ and a $k$-linear quasi-equivalence $F \co \cB \to \cA$, the morphism
\[
   C_{(\omega \circ F)}(\cB) \to C_{\omega}(\cA)
\]
of dg coalgebras induced by $F$ is a derived Morita equivalence, so \emph{a fortiori} a quasi-isomorphism.
\end{corollary}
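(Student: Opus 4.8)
The plan is to show that the coalgebra morphism $\phi\co C_{(\omega \circ F)}(\cB) \to C_{\omega}(\cA)$ of Lemma \ref{coalgfunlemma} induces a quasi-equivalence on the associated dg categories of comodules, and then to invoke the principle recorded in Remark \ref{uniquenessrmk} to deduce that such a morphism is \emph{a fortiori} a quasi-isomorphism. Since $k$ is a field, the canonical Hochschild universal coalgebras $D_{\cA}=\CCC(\cA,h_{\cA^{\op}}\ten h_{\cA})$ and $D_{\cB}=\CCC(\cB,h_{\cB^{\op}}\ten h_{\cB})$ are ind-compact by Example \ref{HHunivcoalg}, with tilting modules $P_{\cA}=D_{\cA}\ten_{\cA}\omega$, $P_{\cB}=D_{\cB}\ten_{\cB}(\omega\circ F)$, and with $C_{\omega}(\cA)=\omega^{\vee}\ten_{\cA}D_{\cA}\ten_{\cA}\omega$, $C_{(\omega\circ F)}(\cB)=(\omega\circ F)^{\vee}\ten_{\cB}D_{\cB}\ten_{\cB}(\omega\circ F)$. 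I may therefore apply Theorem \ref{tannakathm} to both, obtaining quasi-equivalences $\cD_{\dg}(C_{\omega}(\cA)) \simeq \cD_{\dg}(\cA)/(\ker\omega)$ and $\cD_{\dg}(C_{(\omega\circ F)}(\cB)) \simeq \cD_{\dg}(\cB)/(\ker(\omega\circ F))$.

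Next I would exploit $F$ on the module side. As $F$ is a quasi-equivalence, the derived restriction $F^{*}$ and its adjoint induction $F_{!}$ are mutually inverse quasi-equivalences $\cD_{\dg}(\cB)\simeq \cD_{\dg}(\cA)$. The key compatibility is the projection formula
\[
\omega(F_{!}M)=(F_{!}M)\ten_{\cA}\omega \simeq M\ten_{\cB}F^{*}\omega = (\omega\circ F)(M),
\]
valid for all $M\in \cD_{\dg}(\cB)$ because $F^{*}\omega=\omega\circ F$. Hence $F_{!}$ carries $\ker(\omega\circ F)$ onto $\ker\omega$ (using that $F_{!}$ is an equivalence and the formula holds in both directions), and so descends to a quasi-equivalence $\bar F_{!}\co \cD_{\dg}(\cB)/(\ker(\omega\circ F)) \to \cD_{\dg}(\cA)/(\ker\omega)$ of dg quotients.

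It remains to check that, under the Tannakian identifications above, $\bar F_{!}$ corresponds to the corestriction functor $\phi_{*}\co \cD_{\dg}(C_{(\omega\circ F)}(\cB))\to \cD_{\dg}(C_{\omega}(\cA))$ induced by $\phi$. Passing to the quasi-inverse equivalences (those induced by $-\ten P$), this amounts to verifying that $\phi_{*}\circ(-\ten_{\cB}P_{\cB}) \simeq (-\ten_{\cA}P_{\cA})\circ F_{!}$ up to natural quasi-isomorphism. By the same computation, $(F_{!}M)\ten_{\cA}P_{\cA}\simeq M\ten_{\cB}F^{*}P_{\cA}$, so the task reduces to producing a quasi-isomorphism $P_{\cB}\to F^{*}P_{\cA}$ of $C_{\omega}(\cA)$-comodules lying over $\phi$. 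Such a map is furnished by the functoriality of the Hochschild construction in $F$ (Lemma \ref{coalgfunlemma}), which delivers compatible morphisms $D_{\cB}\to F^{*}D_{\cA}$, $P_{\cB}\to F^{*}P_{\cA}$ and $\phi$ itself. Since three of the four functors in the resulting square are quasi-equivalences, $\phi_{*}$ is too, which is precisely the assertion that $\phi$ is a derived Morita equivalence; the quasi-isomorphism statement then follows as in Remark \ref{uniquenessrmk}.

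The hard part will be this last compatibility step, namely that $D_{\cB}\to F^{*}D_{\cA}$ is a quasi-isomorphism and that the induced map on tilting modules respects the coactions. The quasi-isomorphism is the homotopy invariance of the Hochschild complex: both $D_{\cB}$ and $F^{*}D_{\cA}$ resolve the identity $\cB$-bimodule $\id_{\cB}$, the comparison being controlled by the quasi-isomorphisms $\cB(x,y)\to\cA(Fx,Fy)$ that express $F$ being a quasi-equivalence; tensoring with $\omega\circ F=F^{*}\omega$ then gives $P_{\cB}\xra{\sim}F^{*}P_{\cA}$. The coaction-compatibility is exactly the coherence packaged into Lemma \ref{coalgfunlemma}, so beyond invoking homotopy invariance the only genuine labour is tracking the simplicial face and degeneracy maps through the tilting modules to confirm that the comparison is a morphism of $C_{\omega}(\cA)$-comodules over $\phi$.
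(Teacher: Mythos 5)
Your proposal is correct and follows essentially the same route as the paper: the paper's proof likewise invokes Theorem \ref{tannakathm} to identify the corestriction dg functor $\cD_{\dg}(C_{(\omega\circ F)}(\cB))\to\cD_{\dg}(C_{\omega}(\cA))$ with the induced functor $\cD_{\dg}(\cB)/(\ker(\omega\circ F))\to\cD_{\dg}(\cA)/(\ker\omega)$, which is a quasi-equivalence because $F$ is one. The only difference is one of detail: the paper asserts this identification in a single sentence, whereas you spell out the compatibility data (the projection formula for $F_!$ and the comparison $P_{\cB}\to F^{*}P_{\cA}$ of comodules over $\phi$) that justifies it.
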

\begin{proof}
By Theorem \ref{tannakathm}, the dg functor $  \cD_{\dg}(C_{(\omega \circ F)}(\cB))\to\cD_{\dg}(C_{\omega}(\cA))$ is quasi-equivalent to  $ \cD_{\dg}(\cB)/(\ker (\omega\circ F)) \to \cD_{\dg}(\cA)/(\ker \omega)$, which is a quasi-equivalence because $F$ is so. Therefore we have a derived Morita equivalence of dg coalgebras. 
\end{proof}

\begin{corollary}\label{QIMcor}
Given a natural quasi-isomorphism $\eta \co \omega \to \omega'$ of $k$-linear dg functors  $\omega, \omega' \co \cA \to \per_{\dg}(k)$, there is a span of morphisms between $C_{\omega}(\cA)$ and $C_{\omega'}(\cA)$ which are  derived Morita equivalences.  
\end{corollary}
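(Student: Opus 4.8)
The plan is to realise the natural quasi-isomorphism $\eta$ as a single dg functor and then invoke Theorem \ref{tannakathm}. Following the construction in the proof of Proposition \ref{weakbialg}, I would first package $\eta\co\omega\to\omega'$ into a dg functor $\underline{\omega}\co\cA\by I\to\per_{\dg}(k)$ with $\underline{\omega}(X,0)=\omega(X)$, $\underline{\omega}(X,1)=\omega'(X)$, the arrow $0\to 1$ of $I$ acting by $\eta_X$. Writing $j_0,j_1\co\cA\to\cA\by I$ for the two inclusions, we have $\underline{\omega}\circ j_0=\omega$ and $\underline{\omega}\circ j_1=\omega'$, so Lemma \ref{coalgfunlemma} produces
\[
 C_{\omega}(\cA)\to C_{\underline{\omega}}(\cA\by I)\leftarrow C_{\omega'}(\cA).
\]
It then suffices to prove that both legs are derived Morita equivalences, giving the span required by the statement. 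By Theorem \ref{tannakathm} (exactly as in the proof of Corollary \ref{quasiequivcor}), each leg is a derived Morita equivalence if and only if the functor it induces on the associated quotients --- the functor induced by $j_{0!}$ (resp. $j_{1!}$) on $\cD_{\dg}(\cA)/(\ker\omega)\to\cD_{\dg}(\cA\by I)/(\ker\underline{\omega})$ --- is a quasi-equivalence.

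The key computation is to identify $\ker\underline{\omega}$. Since the slot-$0$ component of each representable generator $h_{(X,i)}$ of $\C_{\dg}(\cA\by I)$ is $h_X$, and $\underline{\omega}(h_{(X,i)})$ is $\omega(X)$ or $\omega'(X)$, the natural map $\omega(j_0^{*}M)\to\underline{\omega}(M)$ induced by the counit $j_{0!}j_0^{*}M\to M$ (with $j_0^{*}$ the restriction to $\cA\by\{0\}$) is a quasi-isomorphism on generators, hence for all $M$. Thus $\underline{\omega}(M)\simeq\omega(j_0^{*}M)$ and
\[
 \ker\underline{\omega}=\{\,M\in\cD_{\dg}(\cA\by I)\co j_0^{*}M\in\ker\omega\,\}.
\]
Because $\eta$ is a quasi-isomorphism we also have $\ker\omega=\ker\omega'$, so both upstairs quotients are formed along the same subcategory. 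For the leg coming from $j_0$ I would use the adjunction $j_{0!}\dashv j_0^{*}$: the unit $\id\to j_0^{*}j_{0!}$ is an isomorphism, while $\underline{\omega}\circ j_{0!}=\omega$ and $j_0^{*}(\ker\underline{\omega})\subseteq\ker\omega$ show both adjoints preserve the kernels, so the adjunction descends to the quotients and exhibits $[j_{0!}]$ as fully faithful. Essential surjectivity follows since for any $M$ the counit $j_{0!}(j_0^{*}M)\to M$ has cone with vanishing slot-$0$ component, hence lying in $\ker\underline{\omega}$; so $[j_{0!}]$ is a quasi-equivalence.

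For $j_1$ the description of $\ker\underline{\omega}$ is asymmetric, so instead I would compare $j_1$ with $j_0$. The natural transformation $u\co j_0\Rightarrow j_1$ (the arrow $0\to1$ of $I$) induces $\rho\co j_{0!}\Rightarrow j_{1!}$ whose value at $N$ has cone with slot-$0$ component $\cone(\id_N)\simeq 0$; this cone lies in $\ker\underline{\omega}$, so $\rho$ becomes a natural isomorphism in the quotient and $[j_{1!}]\cong[j_{0!}]$. Hence $[j_{1!}]$ is also a quasi-equivalence, both legs are derived Morita equivalences, and these are \emph{a fortiori} quasi-isomorphisms. The main obstacle is the kernel computation together with the descent of the adjunction to the quotient: one must verify carefully that $\underline{\omega}(M)\simeq\omega(j_0^{*}M)$, so that acyclicity is governed by slot $0$ alone, and that preservation of kernels by both adjoints suffices for full faithfulness of $[j_{0!}]$ on the quotient. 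It is precisely this slot-asymmetry that forces the separate, softer comparison argument for $j_1$.
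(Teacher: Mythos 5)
Your proposal is correct and follows essentially the same route as the paper: the span through $C_{\underline{\omega}}(\cA \by I)$ via Lemma \ref{coalgfunlemma}, the reduction via Theorem \ref{tannakathm} to quasi-equivalence of dg quotients, and the identification (checked on representables, exactly as you do) that $\ker\underline{\omega}$ is governed by the slot-$0$ restriction, which the paper phrases as $\ker\uline{\eta}=\ker(\omega\circ 0_*)$. The only difference is bookkeeping in the final step: where you descend the adjunction $j_{0!}\dashv j_0^*$ to the quotients and then transport the conclusion to $j_{1!}$ via the transformation induced by $0\to 1$, the paper quotients by the cones $c_X$ of $h_{(X,0)}\to h_{(X,1)}$ and uses an explicit cocone formula for Hom complexes in $\cD_{\dg}(\cA\by I)$ to see that both inclusions become quasi-equivalences simultaneously.
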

\begin{proof}
If we let $I$ be the category with objects $0,1$ and a unique non-identity morphism $\pd \co 0 \to 1$, then $\eta$ defines a $k$-linear dg functor
\[
 \uline{\eta} \co \cA \by I \to \per_{\dg}(k)
\]
determined by $\uline{\eta}|_{\cA \by 0}= \omega$, $\uline{\eta}|_{\cA \by 1} = \omega'$ and $\uline{\eta}(\pd \co X \by 0 \to X \by 1)= \eta_X \co \omega(X) \to \omega'(X)$.    

Lemma \ref{coalgfunlemma} combined with the functors $0,1\to I$ then gives us morphisms
\[
 C_{\omega}(\cA) \to  C_{\uline{\eta}}(\cA\by I) \la C_{\omega'}(\cA)
\]
of dg coalgebras; we need to show these   are derived Morita equivalences.

By Theorem \ref{tannakathm}, this is equivalent to showing that the functors
\[
 \cD_{\dg}(\cA)/(\ker \omega) \xra{0^*} \cD_{\dg}(\cA \by I)/(\ker \uline{\eta}) \xla{1^*} \cD_{\dg}(\cA)/(\ker \omega')
\]
 are quasi-equivalences. 

For all $X \in \cA$, the cone $c_X$ of $h_{(X,0)} \to h_{(X,1)}$ lies in $\ker \uline{\eta}$ because $\eta$ is a quasi-isomorphism. For $M \in (c_X)^{\perp}$, this implies that the maps $M(X,1) \to M(X,0)$ are all quasi-isomorphisms. For $M,N \in  \cD_{\dg}(\cA \by I)$, the complex $\HHom_{\cA \by I}(M,N)$ is quasi-isomorphic to the cocone of 
\[
\HHom_{\cA}(M(0),N(0))\by \HHom_{\cA}(M(1),N(1))\to  \HHom_{\cA}(M(1),N(0)), 
\]
so the dg functors
\[
0^*, 1^* \co \cD_{\dg}(\cA) \to \cD_{\dg}(\cA \by I)/(\{c_X~:~ X \in \cA\})  
\]
are quasi-equivalences, as is their retraction $0_*$ given by  $M \mapsto M(0)$.  

We now just observe (by checking on representables $h_{(X,i)}$)  that there is a natural quasi-isomorphism from $\uline{\eta}$ to the dg functor $M \mapsto \cone(\omega M(1) \to \omega M(0) \oplus \omega'M(1))$. Since $\omega \to \omega'$ is a quasi-isomorphism, this is quasi-isomorphic to the dg functor $\omega\circ 0_*$, so  $\ker \uline{\eta}= \ker(\omega\circ 0_*) $.
\end{proof}

 In particular, this implies that the choice in Remark \ref{hfdrmk} does not affect the output, and that the constructions of \S \ref{laxsn} associate strong homotopy monoids to quasi-strong monoidal functors.

\subsection{Example: motives}\label{mothomsn}


Our main motivating example comes from the derived category of motives. 

As explained in \cite[\S 3]{ayoubRealEtOps}, there is a projective model structure on the category  $\cM$ of symmetric $T$-spectra in presheaves of $k$-linear complexes on the category $\Sm/S$ of smooth $S$-schemes. By \cite[Definitions 4.3.6 et 4.5.18]{ayoub6OpsII}, this has a left Bousfield localisation $\cM_{\bA^1}$, the projective $(\bA^1, \et)$-local model structure, whose homotopy category is  Voevodsky's triangulated category of motives over $S$ whenever $S$ is normal. These model categories are defined in terms of cochain complexes, so have the natural structure of dg model categories. 

Write $\cM_{\dg},\cM_{\dg,\bA^1}$ for the full dg subcategories on fibrant cofibrant objects --- this ensures that $\Ho(\cM) \simeq \H^0\cM_{\dg}$ and similarly for $\cM_{\bA^1}$. Take $ \cM_{\dg,c},\cM_{\dg,\bA^1,c}$ to be the full subcategories of  $\cM_{\dg},\cM_{\dg,\bA^1}$  on homotopically compact objects. For similar constructions along these lines, see \cite{BeilinsonVologodsky}. 

 By \cite[Proposition 2.1.6]{cisinskidegliseMixedWeil}, any choice of $k$-linear stable cohomology theory over $S$ gives a commutative ring object $\sE$ in  $\cM$, with the property that 
\[
 E:=\HHom_{\cM}(-,\sE) \co \cM_{\dg}^{\op} \to  \cD_{\dg}(k)
\]
 represents the cohomology theory. In particular, the functor is stable and $\bA^1$-invariant, so 
by \cite[Theorem 1]{cisinskidegliseMixedWeil}, $E$  gives a dg functor  from $\cM_{\dg,\bA^1,c}^{\op}$ to cohomologically finite complexes. 

Remark \ref{hfdrmk} allows us to replace this with a dg functor
\[
 \tilde{E} \co \tilde{\cM}_{\dg,\bA^1,c}^{\op} \to \per_{\dg}(k)       
\]
for some cofibrant replacement $\tilde{\cM}_{\dg,\bA^1,c}$ of $\cM_{\dg,\bA^1,c}$, and we can then form the dg coalgebra $C:=C_{\tilde{E}^{\vee}}(\tilde{\cM}_{\dg,\bA^1,c})$. By Corollary \ref{QIMcor}, this construction is essentially independent of the choice $\tilde{E} $ of replacement.

Theorem \ref{tannakathm} then gives a quasi-equivalence 
\[
 \cD_{\dg}(C) \simeq \cM_{\dg,\bA^1}/(\ker E)
\]
 between
the dg category of $C$-comodules  and the dg enhancement of the triangulated category of  motives modulo homologically acyclic motives. 
If $E'$ is the composition of $E$ with the derived localisation dg functor $ \tilde{\cM}_{\dg,c} \to\tilde{\cM}_{\dg,\bA^1,c} $, and $C':=C_{\tilde{E'}^{\vee}}(\tilde{\cM}_{\dg,c}) $ this also gives 
\[
 \cD_{\dg}(C) \simeq \cM_{\dg}/(\ker E') \simeq \cD_{\dg}(C'). 
\]

One consequence of the existence of a motivic $t$-structure over $S$ would be that $\cM_{\dg,\bA^1,c}$ lies in the right orthogonal complement $(\ker E)^{\perp}$, 
 in which case $ \cM_{\dg,\bA^1,c} $ would be quasi-equivalent to a full dg subcategory of $\cD_{\dg}(C)$.

These constructions  can all be varied by replacing $\cM$ with  the category $\cM^{\eff}$ of  presheaves of $k$-linear complexes on $\Sm/S$ with its projective model structure. This has a left Bousfield localisation $\cM_{\bA^1}$, by \cite[Definition  4.4.33]{ayoub6OpsII}, and 
 the homotopy category of $\cM_{\bA^1}^{\eff}$ is Voevodsky's triangulated category of effective motives when $S$ is normal, as in  \cite[Appendix B]{ayoubRealEtOps}.   There are natural dg enhancements, and we denote their restrictions to fibrant cofibrant objects by  $\cM^{\eff}_{\dg},\cM^{\eff}_{\dg,\bA^1}  $. 

Composing $E$ with the suspension functor $\Sigma^{\infty} \co \cM^{\eff}_{\dg, \bA^1} \to \cM_{\dg, \bA^1}$ defines a dg functor $E \co (\cM_{\dg, \bA^1}^{\eff})^{\op} \to  \cD_{\dg}(k)$, which is simply given by $\HHom_{\cM^{\eff}}(-,\sE(0))$.
We now get a dg coalgebra $C^{\eff}:=C_{\tilde{E}^{\vee}}(\tilde{\cM}_{\dg,\bA^1,c}^{\eff})$ with
\[
 \cD_{\dg}(C^{\eff}) \simeq \cM_{\dg,\bA^1}^{\eff}/(\ker E) \simeq \cM_{\dg}^{\eff}/(\ker E').
\]

A set of compact generators of $\cM_{\dg}^{\eff}$ is given by the set of presheaves $k(X)$ for smooth $S$-varieties $X$. If $\cA$ is the full subcategory on these generators and $E'$ has finite-dimensional values on $\cA$, then  we get a Morita equivalence between $C$  and $C_{E'}(\cA)$. Note that the latter is just given by the total  complex of
\[
n \mapsto \bigoplus_{X_0, \ldots X_n} E'(X_0) \ten k (X_0(X_1)\by X_1(X_2) \by \ldots \by X_{n-1}(X_n)) \ten E'(X_n)^{\vee},
\]
where we write $X(Y)= \Hom_S(Y,X)$, with the dg coalgebra structure of  Proposition \ref{coalg}. 

\begin{example}[Ayoub and Nori's motivic Galois groups]\label{motayoub1}
We now compare this with Ayoub's construction of a motivic Galois group from \cite[\S 2]{ayoubGaloisMotivic1}. For a number field $F$, he applies his Tannaka duality construction to the Betti realisation functor
\[
 \Ho(E)^{\vee} \co \H^0\cM_{\dg,\bA^1}(F,\Q) \to \cD(\Q)
\]
associated to an embedding $\sigma \co F \to \Cx$,
giving a Hopf algebra $\cH_{\mot}(F,\sigma) \in \cD(\Q)$. Replacing $\cM_{\dg,\bA^1}(F,\Q)$ with   
$\cM_{\dg,\bA^1}^{\eff}(F,\Q)$ gives a bialgebra $\cH_{\mot}^{\eff}(F,\sigma)\in \cD(\Q)$. 

From  Remark \ref{cfayoub0}, it follows that $ \cH_{\mot}(F,\sigma)$ and $\cH_{\mot}^{\eff}(F,\sigma)$ are  just the homotopy classes of our dg coalgebras $C, C^{\eff}$ above, equipped with their natural multiplications (and  in the former case, antipode) in the homotopy category coming from the rigid monoidal structure of $\Ho(E)$.

A variant of the construction above is given by considering generators of $\cM_{\dg}^{\eff}(\Q)$ given by $\cone(\Q(Y) \to \Q(X))[i]$, for Nori's good pairs $(X,Y,i)$ as in \cite[Definition  1.1]{HuberMuellerStach}. These have the property that their Betti realisations are cohomologically concentrated in degree $0$. Writing $\cA_{\Nori}$ for the full dg subcategory on these generators, we have $C^{\eff} \simeq C_{E'}(\cA_{\Nori})$. 

We also have $ C_{E'}(\z^0\cA_{\Nori}) \simeq C_{\H^0E'}(\z^0\cA_{\Nori})$ and $  \H^0C_{\H^0E'}(\z^0\cA_{\Nori}) \cong \H^0C_{\H^0E'}(\H^0\cA_{\Nori})$, and (by \cite[Theorem 7.3]{JoyalStreet}) comodules of the latter are precisely  Nori's abelian category $\cM\cM_{\Nori}^{\eff}$ of effective mixed motives as in \cite[Definition 1.3]{HuberMuellerStach}, since  the diagram $D^{\eff}$ of good pairs generates $\z^0\cA$. Then $ \Spec \H^0C_{E'}(\z^0\cA_{\Nori})$ is a pro-algebraic monoid whose group of units is Nori's Galois group $G_{\Nori}$. The inclusion $\H^0C_{E'}(\z^0\cA_{\Nori})  \into \H^0C^{\eff} $ thus induces a surjection $\Spec \H^0C \onto G_{\Nori}$. 

Contrast this with \cite[Remark 5.20]{iwanariTannakization}, highlighting that a comparison between Nori's and Voevodsky's motives is beyond the reach of Iwanari's Tannakian formulation. 
\end{example}

We now introduce alternative simplifications of the dg coalgebra in special cases.
\begin{remark}\label{bondarkormk} 
When $S$ is the spectrum of a field, the comparison of \cite[Appendix B]{ayoubRealEtOps} combines with the results of \cite{VSF} 
or \cite[Corollary 4.4.3]{cisinskideglise} 
to show that a set of generators of $ \cM_{\dg,\bA^1}$ is given by the motives of the form  $M_k(X)(r)$ for $X$ smooth and projective over $S$, and $r \in \Z$. Thus  the set of  motives of the form $M_{k,r}(X):=M_k(X)(r)[2r]$ is another generating set. For $X$ of dimension $d$ over $S$,  the dual of $M_{k,r}(X)$ is $M_{k,d-r}(X)$, so this set of generators is closed under duals, and
\[
 \cM_{\dg,\bA^1}( M_{k,r}(X) ,  M_{k,s}(Y)) \simeq\cM_{\dg,\bA^1} ( M_k(S),M_{k,d+s-r}(X\by_SY)).
\]

When $S$ is the spectrum of a perfect field, 
this implies that 
\[
 \H^i\cM_{\bA^1}( M_{k,r}(X) ,  M_{k,s}(Y)) \cong \CH^{ d+s-r}(X\by_SY, -i)\ten_{\Z}k,
\]
so these generators have no positive $\Ext$ groups between them, and we can replace the full dg category on these generators with its good truncation $\cB$ in non-positive degrees, given by 
\[
 \cB(  M_{k,r}(X) ,  M_{k,s}(Y)):= \tau^{\le 0}\cM_{\dg,\bA^1}(  M_{k,r}(X) ,  M_{k,s}(Y));
\]
we then have $\cD_{\dg}(\cB) \simeq \cM_{\dg,\bA^1}$.

The mixed Weil cohomology theory $E$ when restricted to $\cB$ thus admits a good truncation filtration, whose associated graded is quasi-isomorphic to 
\[
 \H^*E \co \H^0\cB^{\op} \to \C_{\dg}(k);
\]
think of this as a formal Weil cohomology theory. Note that this is a strong monoidal functor determined by the Chern character $\CH^{s}(Y) \to \H^{2s}(Y, E(s))$.

Since $\H^*E$ is finite-dimensional, we can then form the dg coalgebra $C_{\H^*E}(\cB)$, without needing to take a cofibrant replacement of $\cB$. 
Explicitly, this is given by the total  complex of
\begin{align*}
n \mapsto  \bigoplus_{X_0, \ldots X_n, r_0, \ldots, r_n} &\H^{*+2r_0}(X_0,E(r_0)) \ten_k \cB(M_{k,r_0}(X_0), M_{k,r_1}(X_1))  \ten_k\ldots \\
&\ldots \ten_k  \cB(M_{k,r_{n-1}}(X_{n-1}), M_{k,r_n}(X_n) ) \ten_k \H^{*+2r_n}(X_n,E(r_n))^{\vee},
\end{align*}
 with  the dg coalgebra structure of  Proposition \ref{coalg}.  We then have
\[
 \cD_{\dg}(C_{\H^*E}(\cB)) \simeq \cD_{\dg}(\cB)/\ker \H^*E \simeq \cD_{\dg}(\cB)/\ker E \simeq \cM_{\dg,\bA^1}/\ker E.
\]
\end{remark}

\begin{remark}\label{hanamurarmk}
 If we write $\cZ(X, \bt) $ for the $k$-linearisation of Bloch's cycle complex as in \cite[Proposition 1.3]{blochHigherK}, then we can follow \cite{hanamuraMMotAlgI} in defining $\cZ(X \by Y, \bt) \hten \cZ(Y \by Z, \bt) \subset \cZ(X \by Y, \bt) \ten_k \cZ(Y \by Z, \bt)$ to be the quasi-isomorphic subcomplex of cycles intersecting transversely. We then have a bicomplex 
\begin{align*}
 n \mapsto  \bigoplus_{X_0, \ldots X_n, r_0, \ldots, r_n} &\H^{*+2r_0}(X_0,E(r_0)) \ten_k \cZ^{d_0+ r_1-r_0}(X_0 \by X_1, -*)  \hten\ldots \\
&\ldots \hten  \cZ^{d_{n-1} +r_n-r_{n-1}}(X_{n-1}\by X_n, -*) \ten_k \H^{*+2r_n}(X_n,E(r_n))^{\vee},
\end{align*}
for $X_i$ of dimension $d_i$, and with differentials as in Definition \ref{Comegadef}.

The formulae of Proposition \ref{coalg}  make this into a dg coalgebra, which should be Morita equivalent to the dg coalgebra $C_{\H^*E}(\cB)$ of Remark \ref{bondarkormk}.
\end{remark}

\subsection{Monoidal comparisons}

We now consider the case where $(\cA,\boxtimes)$ is a monoidal dg category  and $\omega\co \cA \to \per_{\dg}(k)$ a strong monoidal dg functor.  We also assume that $D$ is a universal bialgebra in the sense of \S \ref{univbialg}.

Note that since $C$ is  a dg bialgebra by Lemma \ref{Calglemma}, the dg category $\C_{\dg}(C)$ has a monoidal structure $\ten_k$, where the coaction on $N\ten N'$ is the composition
\[
 N\ten N'\to (N\ten C) \ten (N'\ten C) \cong (N\ten N')\ten (C\ten C) \to (N\ten N')\ten C       
\]
of the co-actions with the multiplication on $C$.

\begin{lemma}\label{swapbox}
 For $M,M' \in \cD_{\dg}(\cA)$ and $N,N' \in \cD_{\dg}(\cA^{\op})$, there is a natural transformation
\[
 (M\ten_{\cA}N) \ten_k(M'\ten_{\cA}N')\to (M\boxtimes M')\ten_{\cA}(N\boxtimes N').
\]
\end{lemma}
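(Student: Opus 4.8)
The plan is to build the transformation as a composite of two canonical coend identities flanking the unit of the adjunction of Remark \ref{boxadjoint}. First I would use the interchange (Fubini) isomorphism
\[
(M\ten_{\cA}N)\ten_k(M'\ten_{\cA}N')\cong (M\odot M')\ten_{\cA\ten\cA}(N\odot N'),
\]
which holds because a tensor product over the product category $\cA\ten\cA$ factors as the external tensor of the two tensor products over $\cA$ (the module analogue of the level-wise isomorphisms used in Lemma \ref{shufflelemma}). Here $M\odot M'\in\C_{\dg}(\cA\ten\cA)$ and $N\odot N'\in\C_{\dg}(\cA^{\op}\ten\cA^{\op})$. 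Applying Remark \ref{boxadjoint} to the monoidal dg category $(\cA^{\op},\boxtimes)$ gives the unit map $N\odot N'\to \boxtimes_*(N\boxtimes N')$ of the adjunction; tensoring with $M\odot M'$ yields
\[
(M\odot M')\ten_{\cA\ten\cA}(N\odot N')\to (M\odot M')\ten_{\cA\ten\cA}\boxtimes_*(N\boxtimes N').
\]
Finally, the projection formula for the functor $\cD_{\dg}(\cA\ten\cA)\to\cD_{\dg}(\cA)$ of Definition \ref{boxtimesdgdef} (left adjoint to $\boxtimes_*$ by Remark \ref{boxadjoint}) identifies the target with $(M\boxtimes M')\ten_{\cA}(N\boxtimes N')$, giving the required natural transformation.

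Both coend identities are routine Fubini-type manipulations: the first factors $\int^{(X,X')}$ as $\int^X\int^{X'}$, and the projection formula $(M\odot M')\ten_{\cA\ten\cA}\boxtimes_*L\cong (M\boxtimes M')\ten_{\cA}L$ follows from absorbing the coend over the $\boxtimes$-fibres, using that $\boxtimes_*$ is restriction along $\boxtimes\co\cA\ten\cA\to\cA$ and $M\boxtimes M'$ the induced module. Each of the three maps is natural in all four variables, so their composite is a dg natural transformation.

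As a sanity check, on representables $M=h_X,\ M'=h_{X'},\ N=h_{Y^{\op}},\ N'=h_{Y'^{\op}}$ the source becomes $\cA(Y,X)\ten_k\cA(Y',X')$ and the target $\cA(Y\boxtimes Y',X\boxtimes X')$, and the composite is exactly the structure map induced by the bilinearity of $\boxtimes$ --- the same multiplication appearing in the lemma preceding Definition \ref{univbialgdef}. This also indicates an equivalent, more elementary construction in the idiom of the paper: define the map on representables by this bilinearity map, check naturality in the four representable variables, and extend to all of $\cD_{\dg}$ using that $\ten_{\cA}$, $\ten_k$ and $\boxtimes$ each preserve finite complexes, filtered colimits and direct summands in every variable. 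The main obstacle is purely the bookkeeping of variances between $\cA$ and $\cA^{\op}$ together with verifying the projection formula at the cochain level (rather than merely derived); this causes no trouble since coend Fubini is unconditional and the chosen $M,M',N,N'$ are cofibrant.
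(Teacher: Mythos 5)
Your proposal is correct, and your closing ``sanity check'' is in fact exactly the paper's proof: the paper defines the map on representables as the bilinearity map $\cA(X,Y)\ten_k \cA(X',Y') \to \cA(X\boxtimes X', Y\boxtimes Y')$ of $\boxtimes$ and then extends it uniquely to complexes and direct summands, i.e.\ to all of $\cD_{\dg}$. Your main route is genuinely different: you assemble the transformation globally as the Fubini interchange isomorphism $(M\ten_{\cA}N)\ten_k(M'\ten_{\cA}N')\cong (M\odot M')\ten_{\cA\ten\cA}(N\odot N')$, followed by the unit $N\odot N'\to \boxtimes_*(N\boxtimes N')$ of the adjunction of Remark \ref{boxadjoint} for $(\cA^{\op},\boxtimes)$ tensored with $M\odot M'$, followed by the projection formula $(M\odot M')\ten_{\cA\ten\cA}\boxtimes_*L\cong (M\boxtimes M')\ten_{\cA}L$; all three steps are unconditional coend identities (Fubini and co-Yoneda), so they hold at the strict cochain level and naturality in all four variables is automatic. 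What each approach buys: yours produces the map on arbitrary objects in one stroke, makes naturality manifest rather than inherited from an extension argument, and identifies the map conceptually as ``unit composed with projection formula'', which is the cleaner statement to have in hand for the later monoidal compatibilities (Proposition \ref{monoidalprop}, Lemma \ref{Palglemma}); its cost is invoking the left Kan extension along $\boxtimes$ and the strict projection formula explicitly. The paper's argument is shorter, using only that $\ten_{\cA}$, $\ten_k$ and $\boxtimes$ commute with the finite complexes, direct summands (and filtered colimits, which the paper leaves tacit but you correctly include) that generate $\cD_{\dg}(\cA)$ from representables, at the cost of leaving uniqueness and naturality of the extension implicit. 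Since both constructions restrict to the same bilinearity map on representables and both sides preserve the generating colimits in each variable, the two transformations coincide.
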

\begin{proof}
 When $M,M'= h_X,h_Y$ and $N,N'= h_Y,h_Y'$ for $X,X',Y,Y'\in \cA$, this is just the map
\[
 \cA(X,Y)\ten_k \cA(X',Y') \to \cA(X\boxtimes X', Y\boxtimes Y')
\]
given by the bilinearity of $\boxtimes$. This extends uniquely to complexes and direct summands.
\end{proof}

\begin{proposition}\label{monoidalprop}
The dg functor $(-\ten_{\cA}P)\co \cD_{\dg}(\cA)\to \C_{\dg}(C)$ is lax monoidal, with the transformations
\[
  (M\ten_{\cA}P) \ten_k(M'\ten_{\cA}P)\to  (M\boxtimes M')\ten_{\cA}P      
\]
 being quasi-isomorphisms.
\end{proposition}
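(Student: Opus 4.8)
The plan is to build the lax structure from the monoid structure on the tilting module. By Lemma~\ref{Palglemma}, $P$ is a monoid in $\cD_{\dg}(\cA^{\op})$ with multiplication $P\boxtimes P\to P$, and its $C$-coaction is an algebra morphism. First I would define the comparison transformation as the composite
\[
(M\ten_{\cA}P)\ten_k(M'\ten_{\cA}P)\to (M\boxtimes M')\ten_{\cA}(P\boxtimes P)\to (M\boxtimes M')\ten_{\cA}P,
\]
where the first arrow is the natural transformation of Lemma~\ref{swapbox} with $N=N'=P$, and the second is $\id\ten_{\cA}$ applied to the multiplication $P\boxtimes P\to P$. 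The unit constraint is the quasi-isomorphism $k\to h_{\mathbbm{1}}\ten_{\cA}P=P(\mathbbm{1})\simeq\omega(\mathbbm{1})\cong k$ coming from the unit $\mathbbm{1}\ten\mathbbm{1}\to P$ of Lemma~\ref{Palglemma}.

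Next I would check that this composite is a morphism of $C$-comodules for the tensor comodule structure on the source (the structure described before the proposition, using the multiplication $C\ten_k C\to C$). This is precisely the content of the commuting square in Lemma~\ref{Palglemma}: Lemma~\ref{swapbox} intertwines the coactions landing in $C\ten_k C$, and the square records that the multiplication $P\boxtimes P\to P$ carries this $(C\ten_k C)$-coaction to the $C$-coaction along $C\ten_k C\to C$. The associativity and unitality of the lax monoidal structure then reduce to the associativity and unitality of the multiplication on $P$ from Lemma~\ref{Palglemma}, together with the associativity and symmetry of Lemma~\ref{swapbox}, which in turn come from the bilinearity and associativity of $\boxtimes$.

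The substantive point is to show the comparison maps are quasi-isomorphisms, and I would prove this by dévissage. Both bifunctors $(M,M')\mapsto(M\ten_{\cA}P)\ten_k(M'\ten_{\cA}P)$ and $(M,M')\mapsto(M\boxtimes M')\ten_{\cA}P$ preserve quasi-isomorphisms, shifts, cones, filtered colimits and direct summands in each variable separately; here I use that $-\ten_{\cA}P$ is a left Quillen functor (Lemma~\ref{Qlemma}) and that $k$ is a field, so $\ten_k$ and $\boxtimes$ are exact. Since $\cA$ generates $\cD_{\dg}(\cA)$ under these operations, it suffices to treat the representables $M=h_X$, $M'=h_{X'}$. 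There $h_X\ten_{\cA}P=P(X)$ and, because the Yoneda embedding is monoidal (Definition~\ref{boxtimesdgdef}), $(h_X\boxtimes h_{X'})\ten_{\cA}P=h_{X\boxtimes X'}\ten_{\cA}P=P(X\boxtimes X')$. Using the quasi-isomorphism $P\to\omega$ of \S\ref{tilt}, the comparison map becomes the structure map $\mu_{X,X'}\co\omega(X)\ten_k\omega(X')\to\omega(X\boxtimes X')$, which is an isomorphism since $\omega$ is strong monoidal, hence a quasi-isomorphism.

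The main obstacle I anticipate is the two-variable dévissage: I must ensure the reduction to representables is legitimate, i.e. that the class of pairs $(M,M')$ for which the natural transformation is a quasi-isomorphism is closed under the generating operations in each argument while the other is held fixed at an \emph{arbitrary} object, not merely a representable. I would handle this by running the single-variable argument twice, first fixing $M'=h_{X'}$ and varying $M$ over all of $\cD_{\dg}(\cA)$, then fixing an arbitrary $M$ and varying $M'$, using exactness of $-\ten_{\cA}P$ and of $\ten_k,\boxtimes$ at each stage.
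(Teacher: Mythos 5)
Your proposal is correct and follows essentially the same route as the paper: the lax structure is the composite of Lemma \ref{swapbox} with the multiplication $P\boxtimes P\to P$ supplied by Lemma \ref{Palglemma}, and the quasi-isomorphism claim is reduced, via the quasi-isomorphism $P\to\omega$, to the strong monoidality isomorphism $\omega(X)\ten_k\omega(X')\cong\omega(X\boxtimes X')$. The only difference is bookkeeping: where you run an explicit two-variable d\'evissage to representables, the paper transfers along $P\to\omega$ for arbitrary $M,M'$ at once and asserts that the resulting map $(M\ten_{\cA}\omega)\ten_k(M'\ten_{\cA}\omega)\to(M\boxtimes M')\ten_{\cA}\omega$ is an isomorphism, the extension from representables being left implicit there (and your explicit check of $C$-comodule compatibility via the square in Lemma \ref{Palglemma} is likewise glossed over in the paper).
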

\begin{proof}
Lemma \ref{swapbox} gives the required transformations        
\[
 (M\ten_{\cA}P) \ten_k(M'\ten_{\cA}P)\to (M\boxtimes M')\ten_{\cA}(P\boxtimes P)\to (M\boxtimes M')\ten_{\cA}P.
\]
The quasi-isomorphism $P \to \omega$ then  maps these transformations quasi-isomorphically to 
\[
 (M\ten_{\cA}\omega) \ten_k(M'\ten_{\cA}\omega)\to (M\boxtimes M')\ten_{\cA}\omega,
\]
i.e. 
\[
 \omega(M)\ten_k \omega(M) \to \omega(M\boxtimes M'),
\]
which is an isomorphism because $\omega$ is required to be a strong monoidal dg functor.
\end{proof}

\begin{example}[Motives]\label{mothomb}
The model category $\cM^{\eff}$ of $k$-linear presheaves from \S \ref{mothomsn} is monoidal, as is its localisation $\cM_{\bA^1}^{\eff}$. However, the tensor product does not preserve fibrant objects, so the dg categories $ \cM_{\dg}^{\eff},\cM_{\dg,\bA^1}^{\eff}$ of fibrant cofibrant objects are not monoidal. [At best, they are multicategories (a.k.a. coloured operads), with $\HHom_{\cM_{\dg}^{\eff}}(X_1, \ldots, X_n;Y):= \HHom_{\cM^{\eff}}(X_1\ten \ldots \ten X_n;Y)$.]

However, if we take the dg category $\cM_{\dg}^{\eff'}$ of cofibrant objects in $\cM$, with $\cM_{\dg,c}^{\eff'}$ the full subcategory of   compact objects, then  $\cM^{\eff'}_{\dg}$ and $\cM^{\eff'}_{\dg,c} $ are monoidal dg categories. The dg categories $\cM_{\dg}^{\eff},\cM_{\dg,c}^{\eff}$  are quasi-equivalent to the dg quotients of $\cM^{\eff'}_{\dg}, \cM^{\eff'}_{\dg,c}$ by the class of weak equivalences in $\cM^{\eff}$. By \cite[Theorem 1]{cisinskidegliseMixedWeil}, a mixed Weil cohomology theory then gives rise to a contravariant monoidal dg functor $E$  from $\cM^{\eff'}_{\dg}$  to cohomologically finite complexes, by setting $E:=\HHom_{\cM^{\eff}}(-,\cE)$ for the associated presheaf $\cE$ of DGAs.

Since symmetric monoidal dg categories do not form a model category, we cannot then mimic the construction of \S \ref{mothomsn} and replace $E$ with a monoidal dg functor from a cofibrant replacement of $\cM^{\eff'}_{\dg} $ to finite-dimensional complexes. However, we can apply Proposition \ref{monoidalprop} if we can find a Weil cohomology theory taking values in finite-dimensional  complexes.

Objects of $\cM^{\eff'}_{\dg}$ are formal $k$-linear complexes of smooth varieties over $S$. When $S$ is a field admitting resolution of singularities, we can instead consider the model category $\cN^{\eff}$ of presheaves on the category of pairs $j \co U \to X$, where $X$ is smooth and projective over $S$, with $U$ the complement of a normal crossings divisor. Then for any mixed Weil cohomology theory $E$, there is an associated formal theory
\[
 E_f(j \co U \to X):= (\bigoplus_{a,b} \H^a(X, \oR^b j_* \cE_U, d_2),
\]
where $d_2$ is the differential on the second page of the Leray spectral sequence. Alternatively, this can be rewritten (as in \cite[3.2.4]{Hodge2}) in terms of $ \H^a(\tilde{D}^{b}, E(-b))$ and Gysin maps, where $\tilde{D}^{n} $ consists of local disjoint unions of  $n$-fold intersections in $D$. 

The constructions of \S \ref{mothomsn} all adapt from $\cM$ to $\cN$, and the restriction of $E_f$ to $\cN^{\eff'}_{\dg,c}$ takes values in finite-dimensional complexes, so we have a  dg bialgebra $C:=C_{E_f}(\cN^{\eff'}_{\dg,c} )$, and  Proposition \ref{monoidalprop} gives a monoidal dg functor
\[
 \cN^{\eff'}_{\dg} \to \C_{\dg}( C),
\]
inducing an equivalence $\cN^{\eff'}_{\dg}/\ker E \simeq  \cD_{\dg}( C)$. With some work (see Appendix \ref{formalweilapp}) 
it follows that $\cN^{\eff'}_{\dg}/\ker E \simeq \cM^{\eff'}_{\dg}/\ker E$ for all known Weil cohomology theories, and (for Betti cohomology) that  $E_f$ is quasi-isomorphic to $E$. 

Corollaries \ref{quasiequivcor} and \ref{QIMcor} then ensure that $C$ is essentially equivalent to the dg coalgebra of \S \ref{mothomsn}, so the constructions above give a strong compatibility result for the comparisons of \S \ref{mothomsn} with respect to  the monoidal structures. 

By Proposition \ref{bialg}, the multiplication on $C$ comes from the fibred dg functor $\boxtimes \co (\cN^{\eff'}\ten\cN^{\eff'}, \omega\ten \omega) \to (\cN^{\eff'}, \omega)$.  Applying    \cite[Corollary 1.14]{ayoubGaloisMotivic1} to these fibred dg categories gives bialgebra structures on the coalgebras  $[C\ten C], [C] \in \cD(k)$. By Ayoub's weak universal property \cite[Proposition 1.55]{ayoubGaloisMotivic1}, the functors $\boxtimes$ and $\mathbbm{1}$ on derived categories induce morphisms $[C\ten C] \to [C]$ and $[k] \to [C]$ of commutative bialgebras; the relations between these morphisms  force them to be Ayoub's multiplication and unit maps. Functoriality of the  comparison of universal constructions in Remark \ref{cfayoub0} then ensures that this weak bialgebra structure must come from our dg bialgebra structure.  
\end{example}

\appendix

\section{Formal Weil cohomology theories}

\subsection{Quasi-projective pairs and localisation}


\subsubsection{Quasi-projective pairs}

\begin{definition}
 Given a field $F$ admitting resolution of singularities, we let $\SmQP/F$ be the category of pairs $j \co U \to X$, where $X$ is smooth and projective over $F$, with $U$ the complement of a normal crossings divisor. We say that a morphism $(U,X) \to (U',X')$ in $\SmQP/F$ is an  equivalence (or in $\cE$) if it induces an isomorphism  $U\to U'$.
\end{definition}

\begin{lemma}\label{calcfrac}
 The pairs $(\SmQP/F, \cE)$ and $(\cE, \cE)$ admit right calculi of fractions in the sense of \cite[\S 7]{simploc2}.
\end{lemma}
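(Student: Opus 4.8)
The plan is to verify the axioms of a right calculus of fractions in the sense of \cite[\S 7]{simploc2}, the only substantive one being the square-completion (Ore) condition; closure of $\cE$ under composition and the presence of identities are immediate, since a composite of isomorphisms on the open parts is again an isomorphism on open parts. The entire argument rests on one elementary observation, which I would record first: a morphism $(U,X) \to (U',X')$ in $\SmQP/F$ is determined by its restriction $U \to U'$, because $U$ is dense in each connected component of the smooth variety $X$ while $X'$ is separated. Thus the functor $(U,X) \mapsto U$ is faithful, and $\cE$ is exactly the class of morphisms it sends to isomorphisms.

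The cancellation axioms follow at once from this faithfulness, in either variance. If $s\circ f = s\circ g$ with $s \in \cE$, then since $s$ restricts to an isomorphism on open parts we get $f|_U = g|_U$, whence $f = g$ by faithfulness, and we may take the connecting $\cE$-morphism to be the identity; the case $f\circ s = g\circ s$ is identical. The same reasoning applies verbatim inside the category $\cE$ for the pair $(\cE,\cE)$.

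The geometric heart of the proof is the Ore condition, which is where resolution of singularities enters. Given a cospan $(U_1,X_1) \xra{f} (U_3,X_3) \xla{s} (U_2,X_2)$ with $s \in \cE$, the isomorphism $s|\co U_2 \xra{\sim} U_3$ lets me form the map $h := (s|)^{-1}\circ f|\co U_1 \to U_2$ on open parts. I would realise this geometrically: take the scheme-theoretic image (the closure of the graph) of the map $U_1 \to X_1 \by_F X_2$, $u \mapsto (u, h(u))$, and apply resolution of singularities, then blow up the boundary so that its complement becomes a normal crossings divisor, obtaining a smooth projective $X_0$ that contains $U_1$ as a dense open whose complement is an NCD. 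The two projections then give morphisms $\pi_1\co (U_1,X_0) \to (U_1,X_1)$ and $\pi_2\co (U_1,X_0) \to (U_2,X_2)$ in $\SmQP/F$; by construction $\pi_1$ restricts to $\id_{U_1}$ and $\pi_2$ restricts to $h$, so $\pi_1 \in \cE$. The faithfulness observation shows the square commutes: both composites into $(U_3,X_3)$ agree on the dense open $U_1$, since $s|\circ h = f|$ there. For the pair $(\cE,\cE)$ the identical construction applies, but now $f \in \cE$ as well, so $h$ is an isomorphism $U_1 \cong U_2$ and hence $\pi_2 \in \cE$ too, giving a completion with both legs in $\cE$ as required.

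The main obstacle is precisely the construction of the common refinement $X_0$: one must simultaneously dominate $X_1$ and $X_2$ while keeping $X_0$ smooth and projective with normal crossings boundary. This is exactly what the hypothesis that $F$ admits resolution of singularities provides, applied to the graph closure and to the pair $(X_0, X_0\setminus U_1)$; with that in hand the remaining verifications are the routine density-and-separatedness arguments above.
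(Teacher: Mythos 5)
Your proof is correct, and its geometric core coincides with the paper's: your closure of the graph of $h$ inside $X_1\by_F X_2$ is exactly the paper's closure of $V$ in the fibre product $X_1\by_{X_3}X_2$ (the graph lies in the fibre product, which is closed in the ambient product since $X_3$ is separated), followed by the same resolution-of-singularities-with-NCD-boundary step. Where you genuinely diverge is the cancellation axiom and the pair $(\cE,\cE)$. For cancellation, the paper constructs a nontrivial equivalence $b\co (V,Y')\to (V,Y)$ by resolving the closure of $V$ in the double fibre product $Y\by_{f,X',g}Y$, i.e.\ a second resolution-of-singularities argument; you instead observe that the restriction functor $(U,X)\mapsto U$ is faithful (density of the open part, reducedness of $X$, separatedness of $X'$), so $af=ag$ forces $f=g$ and $b=\id$ suffices. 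That observation is sound and buys a strictly more economical proof: one geometric construction instead of two, and it also makes the commutativity of your Ore square immediate rather than an appeal to the universal property of the fibre product. For $(\cE,\cE)$, the paper invokes the two-out-of-three property of $\cE$ together with the general remark of \cite[7.1]{simploc2}, whereas you re-run the explicit construction and note that both legs $\pi_1,\pi_2$ restrict to isomorphisms; both are fine, though the paper's route is the one that generalises without redoing any geometry (and would survive a notion of morphism of pairs for which restriction to open parts failed to be faithful).
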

\begin{proof}
We begin with the case  $(\SmQP/F, \cE)$, noting that $\cE$ contains all identities and is closed under composition. For any diagram $(V,Y) \xra{f} (U,X) \xla{a} (U,X')$ in $\SmQP/F $ (so  $a$ is an equivalence), we first need to find  a commutative diagram
\[
 \begin{CD}
  (V,Y') @>{f'}>> (U,X')\\
@V{b}VV @VV{a}V \\
(V,Y) @>{f}>> (U,X),
 \end{CD}
\]
with $b$ an equivalence. To do this, we first form the fibre product $X'\by_XY$, and observe that the isomorphism $V  =U\by_UV$ gives us a map $V \into X'\by_XY$. Taking $Y'$ to be a resolution of singularities of the closure of $V $ in $X'\by_XY$ gives the required diagram.

Secondly, we need to show that if any parallel arrows $f, g \co (V,Y) \to (U,X')$ in $\SmQP/F $ satisfy $af=ag$ for some equivalence  $a\co (U, X') \to (U,X)$, then there exists an equivalence $b\co (V,Y') \to (V,Y)$ with  $fb=gb$.  The condition $af=ag$ implies that the maps $f,g\co V \to U$ are equal. There is therefore a diagonal map
\[
 V \into Y\by_{f,X',g}Y.
\]
Taking $Y'$ to be a resolution of singularities of the closure of $V$ in $Y\by_{f,X',g}Y$ then gives the construction required. Thus  $(\SmQP/F, \cE)$ admits a right calculus of fractions.

Finally, note that $\cE$ satisfies the two out of three property, so as observed in \cite[7.1]{simploc2}, it follows that $(\cE,\cE)$ admits a right calculus of fractions.
\end{proof}

\subsubsection{Localisation and DG quotients}

\begin{definition}
 Given a category $\C$ and a subcategory $\cW$, we follow \cite[\S 7]{simploc2} in writing $\C[\cW^{-1}]$ for the localised category given by formally  inverting all morphisms in $\cW$.
\end{definition}

\begin{definition}
 Given a category $\C$ and a subcategory $\cW$, and an object $Y \in \C$, we  write 
\[
 \C\cW^{-1}(X,Y)
\]
for the category whose objects are spans
\[
 Y \xla{u} Y' \xra{f} X
\]
with $u$ in $\cW$, and whose morphisms are commutative diagrams
\[
 \begin{CD}
  Y @<{u_1}<<  Y_1 @>f_1>> X\\
@| @VV{v}V  @| \\
 Y @<{u_2}<<  Y_2 @>f_2>> X,
 \end{CD}
\]
with $v$ in $\cW$.
\end{definition}
Note that this category is denoted in \cite[5.1]{simploc} by $ N^{-1}\C\cW^{-1}(Y,X)$. 

\begin{definition}
 Given a category $\C$, write $k\C$ for the $k$-linear category with the same objects as $\C$, but with morphisms given by the free $k$-modules
\[
 (k\C)(X,Y):= k(\C(X,Y)).
\]
\end{definition}

\begin{definition}
 Given $\sF \in \C_{\dg}(k\C)$ (i.e. a contravariant dg functor from $\C$ to cochain complexes over $k$)  and $Y \in \C$, define the cochain complex $\sF\cW^{-1}(Y)$ by
\[
 \sF\cW^{-1}(Y):= \holim_{\substack{\longrightarrow \\Y' \in\cW\da Y}} \sF(Y').
\]
Explicitly, this can be realised as the direct sum total complex of the simplicial cochain complex
\[
 \bigoplus_{Y_0' \to Y} \sF(Y_0') \Leftarrow \bigoplus_{Y_1' \to Y_0' \to Y} \sF(Y_0') \Lleftarrow \ldots .
\]
Beware that this construction is not functorial in $Y$.
\end{definition}

\begin{proposition}\label{locprop}
Take a small category $\C$ and a subcategory $\cW$ such that $(\C,\cW)$ and $(\cW,\cW)$  admit right calculi of fractions. Let $\cD$ be the localised category $\C[\cW^{-1}]$ given by formally  inverting all morphisms in $\cW$. Then the functor $\lambda\co \C \to \cD$ gives a left  Quillen functor
\[
\lambda_!\co \C_{\dg}(k\C) \to \C_{\dg}(k\cD),
\]
left adjoint to $\lambda^{-1}$, making $\C_{\dg}(k\cD)$ Quillen-equivalent to the left Bousfield localisation of $\C_{\dg}(k\C)$ at the image $k\cW$ of $\cW$ under the Yoneda embedding $k\co \C\to \C_{\dg}(k\C)$.
\end{proposition}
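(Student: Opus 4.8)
The plan is to realise $\lambda^{-1}$ as a right Quillen functor and then promote the resulting Quillen adjunction to a Quillen equivalence onto the Bousfield localisation, using the two calculi of fractions to control the derived left Kan extension. First I would equip $\C_{\dg}(k\C)$ and $\C_{\dg}(k\cD)$ with their projective model structures, in which fibrations and weak equivalences are detected objectwise. Since $\lambda^{-1}$ is precomposition with $\lambda$ and $\lambda$ is a bijection on objects, it visibly preserves objectwise surjections and objectwise quasi-isomorphisms; hence $\lambda^{-1}$ is right Quillen and $(\lambda_!,\lambda^{-1})$ is a Quillen adjunction, with $\lambda_!$ the left Kan extension along $k\C \to k\cD$.

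Next I would show that $\lambda_!$ descends to the localisation. The key computation is that the left Kan extension of a representable is representable, $\lambda_! h_X \cong h_{\lambda X}$, so for $w \in \cW$ the map $\lambda_! h_w = h_{\lambda w}$ is an isomorphism because $\lambda w$ is invertible in $\cD$. As $\C_{\dg}(k\C)$ is combinatorial and (as $k$ is a field) left proper, the localisation $\mathrm{L}_{k\cW}\C_{\dg}(k\C)$ exists, and by its universal property the left Quillen functor $\lambda_!$, which sends every generator $h_w$ of $k\cW$ to an isomorphism, factors through a left Quillen functor $\mathrm{L}_{k\cW}\C_{\dg}(k\C) \to \C_{\dg}(k\cD)$.

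I would then identify the relevant fibrant objects and reduce to the two standard conditions for a Quillen equivalence. By Yoneda, $\oR\Map(h_X, M) \simeq M(X)$, so a module $M$ is $k\cW$-local exactly when $M(w)$ is a quasi-isomorphism for all $w \in \cW$; in particular every $\lambda^{-1}N$ is local, since it inverts $\cW$ strictly. The first condition — that $\lambda^{-1}$ reflects weak equivalences between fibrant objects — is then immediate: the fibrant objects of $\mathrm{L}_{k\cW}\C_{\dg}(k\C)$ are precisely the local modules, weak equivalences between local modules are objectwise quasi-isomorphisms, and $\lambda^{-1}$ is objectwise (all objects of $\C_{\dg}(k\cD)$ being projectively fibrant). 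It remains to treat the derived unit.

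The heart of the argument, and the main obstacle, is the derived unit. For cofibrant $\sF$ this is the canonical map $\sF \to \lambda^{-1}\oL\lambda_!\sF$, whose value at $Z \in \C$ is
\[
\sF(Z) \to \oL\lambda_!\sF(\lambda Z) = \hocolim_{\lambda \da \lambda Z}\sF .
\]
Here both hypotheses are consumed: representing an arbitrary morphism $\lambda X \to \lambda Z$ by a right fraction $Z \la Y' \to X$ should exhibit the functor $(\cW \da Z)^{\op} \to (\lambda \da \lambda Z)$, $(Y',u)\mapsto (Y',\lambda u)$, as homotopy cofinal — the Ore condition for $(\C,\cW)$ supplying the connecting maps and the calculus for $(\cW,\cW)$ the filteredness of the comma categories — whence $\lambda^{-1}\oL\lambda_!\sF(Z) \simeq \sF\cW^{-1}(Z)$ is the filtered homotopy colimit of the fraction formula. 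Since $\sF \mapsto \lambda^{-1}\oL\lambda_!\sF$ preserves homotopy colimits and local equivalences are closed under them, it suffices to treat $\sF = h_X$, where the unit becomes the localisation map $k\C(-,X) \to k\cD(-,X)$ on hom-complexes and $k\cD(Z,X) \simeq \hocolim_{(\cW\da Z)^{\op}} k\C(Y',X)$; this presents $h_X \to h_X\cW^{-1}$ as a $k\cW$-local equivalence, so the unit is a local equivalence for all cofibrant $\sF$. Establishing the cofinality rigorously — equivalently, that the ordinary localisation $\C[\cW^{-1}]$ already models the homotopy-coherent localisation, which is exactly what the two calculi of fractions buy — is the step that will require the most care.
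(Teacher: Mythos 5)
Your overall strategy coincides with the paper's: set up the projective Quillen adjunction $(\lambda_!,\lambda^{-1})$, characterise the $k\cW$-local modules as those sending $\cW$ to quasi-isomorphisms, and identify $\oL\lambda_!\sF(\lambda Z)$ with the fraction formula $\sF\cW^{-1}(Z)$. Your homotopy-cofinality claim is precisely equivalent to the Dwyer--Kan statement the paper cites (\cite[Propositions 7.2 and 7.3]{simploc2}, that $B\C\cW^{-1}(X,Y)\to\cD(\lambda X,\lambda Y)$ is a weak equivalence), so deferring that step to the literature is legitimate. The genuine gap is in your last step: you pass from ``the derived unit is pointwise the map $\sF(Z)\to\sF\cW^{-1}(Z)$'' to ``the derived unit is a $k\cW$-local equivalence'' with no argument beyond the identification itself (``this presents $h_X\to h_X\cW^{-1}$ as a $k\cW$-local equivalence''). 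That implication is not formal: a local equivalence must be tested against all local $\sG$, i.e.\ one needs $\oR\HHom_{k\C}(\lambda^{-1}\lambda_!\sF,\sG)\to\oR\HHom_{k\C}(\sF,\sG)$ to be a quasi-isomorphism, and knowing the values of the target of the unit gives no handle on maps \emph{out} of it. Beware in particular that with a \emph{right} calculus of fractions and contravariant modules, $\sF\cW^{-1}$ is indexed by $\cW$-resolutions of the evaluation point and is not functorial (the paper warns of exactly this), so $\lambda^{-1}h_{\lambda X}$ is \emph{not} a filtered homotopy colimit of representables along maps of $k\cW$; that easy argument would be available for a left calculus, but not here.

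This missing step is where the paper spends the second half of its proof, and your reduction to representables does not avoid it. The paper argues: (i) if $\sF$ is local, then $\sF(Y)\to\sF\cW^{-1}(Y)$ is a quasi-isomorphism, because $(\cW\da Y)^{\op}$ is filtered (this is what the $(\cW,\cW)$ calculus buys) with all transition maps quasi-isomorphisms; hence the unit is an \emph{objectwise} quasi-isomorphism on local cofibrant modules; (ii) since $\lambda$ is essentially surjective, $\lambda^{-1}$ reflects quasi-isomorphisms, so by the triangle identity and (i) the derived counit $\oL\lambda_!\lambda^{-1}\sG\to\sG$ is a quasi-isomorphism for \emph{every} $\sG$; (iii) for local $\sG$ one then has the chain
\[
\oR\HHom_{k\C}(\sF,\sG)\simeq\oR\HHom_{k\C}(\sF,\lambda^{-1}\oL\lambda_!\sG)\simeq\oR\HHom_{k\cD}(\lambda_!\sF,\oL\lambda_!\sG)\simeq\oR\HHom_{k\C}(\lambda^{-1}\lambda_!\sF,\sG),
\]
which is exactly the statement that the unit is a $k\cW$-local equivalence. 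Note that even for $\sF=h_X$ the only available proofs run through (i)--(iii) (equivalently, through derived full faithfulness of $\lambda^{-1}$), and step (i) must be applied to \emph{arbitrary} local cofibrant modules, not just representables; so restricting attention to $h_X$ buys nothing, and (i)--(iii) are absent from your proposal. To repair it, keep your cofinality input, but replace the final assertion by the paper's three steps above.
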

\begin{proof}
The functor $\lambda_!$ satisfies $\lambda_!(k C) = k \lambda(C)$, which then determines $\lambda_!$  by right Kan extension. We begin by computing  this for cofibrant $k\C$-modules. 

Combining \cite[Propositions 7.2 and 7.3]{simploc2}, the morphism
\[
 B\C\cW^{-1}(X,Y) \to \cD(\lambda X, \lambda Y)
\]
 is a weak equivalence of simplicial sets for all $X,Y \in \C$. Since $k\cD(\lambda X, \lambda Y) = (\lambda_! kX)(\lambda Y)$,  and $B\C\cW^{-1}(X,Y)  = (k X)\cW^{-1}(Y)$, this gives a quasi-isomorphism 
 \[
  (k X)\cW^{-1}(Y) \to (\lambda_! kX)(\lambda Y),
 \]
  functorial in $X$ (but not in $Y$).
Since any cofibrant $k\C$-module $\sF$ is a retraction of a filtered colimit of finite complexes of $kX$'s, this gives quasi-isomorphisms
\[
 \sF \cW^{-1}(Y) \xra{\sim} (\lambda_! \sF)(\lambda Y)
\]
for all $Y \in \C$.

Now, the unit $\sF \to \lambda^{-1}\lambda_!\sF$ of the adjunction gives maps
\[
 \sF(Y) \to (\lambda_!\sF)(\lambda Y)
\]
for all $Y \in \C$, and these factor through the maps above, giving
\[
  \sF(Y) \to \sF \cW^{-1}(Y) \xra{\sim} (\lambda_!\sF)(\lambda Y).
\]
The $k\C$-module  $\sF$ will be  $k\cW$-local if and only if $\sF$ maps morphisms in $\cW$ to quasi-isomorphisms. If this is the case, then the map $\sF(Y) \to \sF\cW^{-1}(Y)$ is a quasi-isomorphism, so the unit
\[
 \sF(Y) \to (\lambda_!\sF)(\lambda Y)
\]
is also a quasi-isomorphism.

 Because $\lambda$ is essentially surjective on objects, the functor $\lambda^{-1}$ reflects quasi-isomorphisms. Thus the co-unit $\oL \lambda_! \lambda^{-1}\sG \to \sG$ of the derived adjunction is a quasi-isomorphism for all $\sG$.
Since $\lambda$ maps $\cW$ to isomorphisms, any object in the image of $\lambda^{-1}$ is $k\cW$-local. It therefore suffices to show that for any cofibrant $\sF \in DG(k\C)$, the unit $\sF(Y) \to (\lambda_!\sF)(\lambda Y)  $  of the adjunction is a $k\cW$-local equivalence. 
Now, for any $k\cW$-local object $\sG$ 
\begin{eqnarray*}
\oR\HHom_{k\C}(\sF, \sG)  &\simeq& \oR\HHom_{k\C}(\sF, \lambda^{-1}\oL \lambda_!\sG)\\
  &\simeq& \oR\HHom_{k\cD}( \lambda_!\sF, \oL \lambda_!\sG)\\
&\simeq&  \oR\HHom_{k\cD}(\oL \lambda_!\lambda^{-1} \lambda_!\sF, \oL \lambda_!\sG)\\
&\simeq&  \oR\HHom_{k\C}(\lambda^{-1} \lambda_!\sF,\lambda^{-1} \oL \lambda_!\sG)\\
&\simeq&\oR\HHom_{k\C}(\lambda^{-1} \lambda_!\sF,\sG),
\end{eqnarray*}
as required.
\end{proof}

\begin{corollary}\label{loccor}
In the setting of Proposition \ref{locprop},
 the functor $\lambda_! $ gives  a quasi-equivalence $(k\cW)^{\perp} \to \cD_{\dg}(k\cD)$  of dg categories. Moreover, the map $(\cW)^{\perp}\to \cD_{\dg}(k\C)/\cD_{\dg}(k\cW)$  to the dg quotient is a quasi-equivalence.  
\end{corollary}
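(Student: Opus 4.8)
The plan is to deduce both quasi-equivalences formally from the Bousfield localisation of Proposition \ref{locprop}, following the template of the proof of Theorem \ref{tannakathm} with the localisation unit in place of $\tilde\eta_M$. By that proposition, $\oL\lambda_!$ exhibits $\C_{\dg}(k\cD)$ as the left Bousfield localisation of $\C_{\dg}(k\C)$ at $k\cW$, with right adjoint $\lambda^{-1}$; its fibrant objects are the $k\cW$-local ones, i.e.\ the cofibrant $\sF$ sending every morphism of $\cW$ to a quasi-isomorphism. The first step is to identify this class with $(k\cW)^{\perp}$: an object $\sF$ is $k\cW$-local exactly when $\oR\HHom_{k\C}(\cone(kX \to kY),\sF)\simeq 0$ for every $w\co X\to Y$ in $\cW$, and since these cones generate $\cD_{\dg}(k\cW):=\ker(\oL\lambda_!)$ as a localising subcategory of $\cD_{\dg}(k\C)$, orthogonality against the generators is the same as orthogonality against all of $\cD_{\dg}(k\cW)$. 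Thus $(k\cW)^{\perp}$ is precisely the full dg subcategory of $k\cW$-local objects.

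For the quasi-equivalence $(k\cW)^{\perp}\to \cD_{\dg}(k\cD)$, I would verify full faithfulness and essential surjectivity of $\lambda_!$ on this subcategory. Full faithfulness uses the dg adjunction $\lambda_!\dashv \lambda^{-1}$: for $M,N\in(k\cW)^{\perp}$ one has $\oR\HHom_{k\cD}(\oL\lambda_! M,\oL\lambda_! N)\simeq \oR\HHom_{k\C}(M,\lambda^{-1}\oL\lambda_! N)$, and the unit $N\to \lambda^{-1}\oL\lambda_! N$ is a $k\cW$-local equivalence between $k\cW$-local objects --- both $N$ and everything in the image of $\lambda^{-1}$ are local --- hence a quasi-isomorphism, so the map out of the cofibrant $M$ is a quasi-isomorphism. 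Essential surjectivity is exactly the statement recorded in the proof of Proposition \ref{locprop} that the derived counit $\oL\lambda_!\lambda^{-1}\sG\to \sG$ is a quasi-isomorphism for all $\sG$: a cofibrant replacement of $\lambda^{-1}\sG$ lies in $(k\cW)^{\perp}$ and maps to $\sG$.

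For the quotient description I would run the admissibility argument of Theorem \ref{tannakathm}. For cofibrant $M\in\cD_{\dg}(k\C)$ the localisation unit $M\to \lambda^{-1}\oL\lambda_! M$ (after cofibrantly replacing the target) gives a morphism into $(k\cW)^{\perp}$ whose cocone is sent to $0$ by $\oL\lambda_!$, since the triangle identity forces $\oL\lambda_! M\to \oL\lambda_!\lambda^{-1}\oL\lambda_! M$ to be a section of the quasi-isomorphism $\oL\lambda_!\lambda^{-1}\oL\lambda_! M\to \oL\lambda_! M$, hence itself a quasi-isomorphism; thus the cocone lies in $\cD_{\dg}(k\cW)$. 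This exhibits $\cD_{\dg}(k\cW)$ as right admissible in the sense of \cite[\S 12.6]{drinfeldDGQuotient}, and by the equivalence of right admissibility with the quotient description (\cite[\S 1]{BondalKapranovRepSerre}, \cite[\S I.2.6]{verdierCatDer}) the composite $(k\cW)^{\perp}\into\cD_{\dg}(k\C)\to\cD_{\dg}(k\C)/\cD_{\dg}(k\cW)$ is a quasi-equivalence.

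Most of the substance is already contained in Proposition \ref{locprop}, so I expect the corollary to be essentially formal. The points needing care are the translation between the model-categorical notion of a $k\cW$-local object and the dg-categorical right orthogonal complement $(k\cW)^{\perp}$, together with the identification $\cD_{\dg}(k\cW)=\ker(\oL\lambda_!)$ with the localising subcategory generated by the cones, and the bookkeeping of cofibrant and fibrant replacements so that the derived functors assemble into honest dg functors between $(k\cW)^{\perp}$, $\cD_{\dg}(k\cD)$ and the dg quotient.
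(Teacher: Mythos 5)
Your proof is correct and follows essentially the same route as the paper: both reduce everything to Proposition \ref{locprop}, identify $(k\cW)^{\perp}$ with the fibrant--cofibrant objects of the Bousfield-localised model structure (giving the quasi-equivalence with $\cD_{\dg}(k\cD)$), and obtain the quotient statement via right admissibility in the sense of \cite[\S 12.6]{drinfeldDGQuotient}. The only cosmetic difference is that you produce the admissibility morphism from the derived adjunction unit $M \to \lambda^{-1}\oL\lambda_!M$, whereas the paper uses fibrant replacement in the Bousfield model structure; in this setting these are the same construction.
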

\begin{proof}
First observe that $(k\cW)^{\perp} \subset \cD_{\dg}(k\C) $ consists of the fibrant cofibrant objects in   the  Bousfield model structure, automatically giving the quasi-equivalence $(k\cW)^{\perp} \to \cD_{\dg}(k\cD)$. 

Fibrant replacement in the Bousfield model structure gives us morphisms $r \co M \to \hat{M}$ in  for each $M \in \cD_{\dg}(k\C)$,
with $\hat{M} \in (k\cW)^{\perp}= \cD_{\dg}(k\cW)^{\perp}$ and $\cone(r) \in \cD_{\dg}(k\cW)$. Thus $\cD_{\dg}(k\cW)$ is right admissible in the sense of \cite[\S 12.6]{drinfeldDGQuotient}, giving the quasi-equivalence $(\cW)^{\perp}\to \cD_{\dg}(k\C)/\cD_{\dg}(k\cW)$.
\end{proof}

Now write $\Sm/F$ for the category of smooth schemes over $F$.

\begin{corollary}\label{QPcor}
 The excision functor $(X,D) \mapsto X\backslash D$ induces  quasi-equivalences  $\cD_{\dg}(\SmQP/F,k)/ \cD_{\dg}(k\cE) \la (k\cE)^{\perp} \to \cD_{\dg}(\Sm/F,k)$.
\end{corollary}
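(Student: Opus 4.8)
The plan is to feed the pair $(\SmQP/F,\cE)$ into Corollary \ref{loccor}, which applies by Lemma \ref{calcfrac}, and then to identify the resulting localised category with $\Sm/F$ by means of excision. Write $\cL:=(\SmQP/F)[\cE^{-1}]$ for the localisation. Corollary \ref{loccor} then supplies at once the left-hand quasi-equivalence $(k\cE)^{\perp}\to \cD_{\dg}(\SmQP/F,k)/\cD_{\dg}(k\cE)$, together with a quasi-equivalence $\lambda_!\co (k\cE)^{\perp}\to \cD_{\dg}(k\cL)$. Thus the only remaining task is the right-hand quasi-equivalence, and for this it suffices to prove that excision induces an equivalence of ordinary categories $\cL\xra{\sim}\Sm/F$: linearising such an equivalence yields a $k$-linear (hence quasi-) equivalence $k\cL\simeq k(\Sm/F)$, and $\cD_{\dg}(-)$ is invariant under quasi-equivalences of dg categories, so $\cD_{\dg}(k\cL)\simeq\cD_{\dg}(\Sm/F,k)$.

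The excision functor $\epsilon\co\SmQP/F\to\Sm/F$, $(X,D)\mapsto X\backslash D$, sends every morphism of $\cE$ to an isomorphism (by definition $\cE$ consists of the morphisms inducing an isomorphism on open parts), so it factors uniquely through $\bar\epsilon\co\cL\to\Sm/F$. \emph{Essential surjectivity} of $\bar\epsilon$ is immediate from resolution of singularities: any smooth $U/F$ embeds as the complement of a normal crossings divisor in some smooth projective $X$, so $U=\epsilon(X,X\backslash U)$. For \emph{full faithfulness} we use the right calculus of fractions. Because $(\SmQP/F,\cE)$ admits one, every morphism $(U,X)\to(U',X')$ in $\cL$ is represented by a roof $(U,X)\xla{s}(U,Y)\xra{g}(U',X')$ with $s\in\cE$; since $s$ is the identity on open parts, $\bar\epsilon$ sends this roof to $g|_U\co U\to U'$. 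Given an arbitrary $f\co U\to U'$ in $\Sm/F$, form the closure $Z$ of the graph $\Gamma_f\subset U\by U'$ inside $X\by X'$, and choose (Hironaka) a resolution $Y\to Z$ which is an isomorphism over the smooth locus and has $Y\backslash U$ a normal crossings divisor. The two projections then give $s\co(U,Y)\to(U,X)$ in $\cE$ and $g\co(U,Y)\to(U',X')$ with $g|_U=f$, so $f$ lies in the image and $\bar\epsilon$ is full.

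For faithfulness, suppose two roofs induce the same morphism $f$ on open parts. One dominates both by resolving the closure of the diagonal copy of $U$ inside the appropriate fibre product of $Y_1$ and $Y_2$ over $X$ (and over $X'$), exactly the ``equaliser'' construction carried out in the second half of the proof of Lemma \ref{calcfrac}; this common refinement shows the two roofs coincide in $\cL$. Hence $\bar\epsilon$ is an equivalence, completing the identification and the proof.

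The homotopy-theoretic content is entirely absorbed into Corollary \ref{loccor}, so \textbf{the main obstacle} is geometric rather than categorical: it is the systematic use of resolution of singularities to compactify an arbitrary morphism of smooth varieties (closure of the graph, followed by a resolution that is an isomorphism over the open part and has normal crossings boundary) and, for faithfulness, to compactify the relevant equalisers. The delicate point is arranging every such resolution so that the distinguished open subscheme stays \emph{exactly} $U$, which is precisely where the hypothesis that $F$ admit resolution of singularities is indispensable.
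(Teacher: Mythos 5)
Your proof is correct and follows essentially the same route as the paper: apply Lemma \ref{calcfrac} to Corollary \ref{loccor}, then identify $(\SmQP/F)[\cE^{-1}]$ with $\Sm/F$ via the excision functor. The only difference is one of detail --- the paper merely notes essential surjectivity of excision and asserts the equivalence of localised categories, whereas you also verify fullness and faithfulness (closure of the graph, resolution with normal crossings boundary, and common refinements of roofs), correctly filling in the geometric content that the paper's one-line proof leaves implicit.
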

\begin{proof}
 This comes from applying Lemma \ref{calcfrac} to Corollary \ref{loccor}, noting that the excision functor is essentially surjective, so gives an equivalence
\[
 (\SmQP/F)[\cE^{-1}] \simeq \Sm/F.
\]
\end{proof}

\subsubsection{Formal Weil cohomology theories}\label{formalweilapp}

As in Example \ref{mothomb}, for any $k$-linear mixed Weil cohomology theory $E$ over the field $F$, we can now define the formal Weil cohomology theory
\[
 E_f \co (\SmQP/F)^{\op} \to \C_{\dg}(k)
\]
by
\[
 E_f(U \xra{j} X):= (\bigoplus_{a,b} \H^a(X, \oR^b j_* E_U, d_2),
\]
where $d_2$ is the differential on the second page of the Leray spectral sequence.

Weight considerations or standard results on Gysin maps imply that
the Leray spectral sequence degenerates at $E_2$ (at least for all known Weil theories), so  any equivalence $(U,X) \to (U,X')$ in $\SmQP/F $ induces a quasi-isomorphism on $E_f$.

Writing $\cN^{\eff'}_{\dg}:= \cD_{\dg}(\SmQP/K,k)$ as in Example \ref{mothomb}, the functor $E_f$ extends $k$-linearly, giving
\[
 E_f^{\vee}\co \cN^{\eff'}_{\dg}/ \cD_{\dg}(k\cE) \to \C_{\dg}(k),
\]
 since $\cE$ lies in the kernel of $E_f$. By Corollary \ref{QPcor}, $ \cN^{\eff'}_{\dg}/ \cD_{\dg}(k\cE) $ is quasi-equivalent to $\cM_{\dg}^{\eff'}$.

Moreover, since the Leray spectral sequence degenerates, we have $\ker E_f = \ker E$ on $ \cN^{\eff'}_{\dg}$, so
\[
 \cN^{\eff'}_{\dg}/\ker E_f = \cN^{\eff'}_{\dg}/\ker E \simeq \cM_{\dg}^{\eff'}/\ker E.
\]

 \subsection{Mixed Hodge structures on Betti cohomology}

\begin{definition}
For $\Lambda \subset \R$ a subfield, define $\MHS_{\Lambda}$ to be the tensor category of mixed Hodge structures in finite-dimensional vector spaces over $\Lambda$. Explicitly, an object of $\MHS_{\Lambda}$ consists of a finite-dimensional vector space $V$ over $\Lambda$ equipped with an increasing (weight) filtration $W$, and a decreasing (Hodge) filtration $F$ on $V\ten_{\Lambda}\Cx$ (both exhaustive and Hausdorff), such that 
\[
 \gr_F^p\gr_{\bar{F}}^q\gr^W_nV=0
\]
for $p+q\ne n$.

The functor forgetting the filtrations is faithful, so by Tannakian duality there is a corresponding affine group scheme which (following \cite{arapurapi1}) we refer to as the universal Mumford--Tate group $\MT_{\Lambda}$; this allows us to identify $\MHS_{\Lambda}$ with the category of finite-dimensional $\MT_{\Lambda}$-representations.


Denote the pro-reductive quotient of $\MT_{
\Lambda}$ by $\PMT_{\Lambda}$ --- representations of this correspond to Hodge structures (i.e. direct sums of pure Hodge structures) over $\Lambda$. The assignment of weights to pure Hodge structures defines a homomorphism $\bG_{m,\Lambda} \to\PMT_{\Lambda}$. 
\end{definition}

\begin{definition}
 For $\Lambda \subset \R$ a subfield, 
a $\Lambda$-Hodge complex in the sense of \cite[Definition 3.2]{beilinson} is a tuple
$(V_F, V_{\Lambda},V_{\Cx},\phi, \psi)$, where $(V_{\Lambda},W)$ is a filtered complex of $\Lambda$-modules, $(V_{\Cx},W)$ is a filtered complex of complex vector spaces, $(V_F,W, F)$ a bifiltered complex of complex vector spaces, and 
\[
\phi\co V_{\Lambda}\ten_{\Lambda}\Cx \to V_{\Cx} \quad \psi\co V_F \to  V_{\Cx}
\]
are $W$-filtered quasi-isomorphisms; 
these must also satisfy the conditions that
\begin{enumerate}
 \item the cohomology $\bigoplus_i \H^i(V_{\Lambda})$ is finite-dimensional over $\Lambda$;

\item for any $n \in \Z$, the differential in the filtered complex $(\gr^W_nV_F, \gr^W_nF)$ is strictly compatible with the filtration, 
or equivalently the map $\H^*(F^p\gr^W_nV_F) \to \H^*(\gr^W_nV_F)$ is injective;

\item the induced Hodge filtration together with the isomorphism $\H^i(\gr^W_n V_{\Lambda})\ten_{\Lambda}\Cx \to \H^i(\gr^W_n V_F)$ defines a pure $\Lambda$-Hodge structure of weight $n$ on $\H^i(\gr^W_n V_{\Lambda})$.
\end{enumerate}
\end{definition}
 
\begin{example}\label{qprojhodgecx}
For a sheaf  $\sF$ on $Y(\Cx)$, write $\sC^{\bt}_Y(\sF)$ for the Godement resolution of $\sF$ --- this is a cosimplicial diagram  of flabby sheaves. Write $\CC^{\bt}(X,\sF)$ for the global sections of $\sC^{\bt}_X(\sF)$.

Take a smooth projective complex variety $X$ and a complement  $j\co Y \into X$ of a normal crossings divisor $D$.
Then  set $A_{\Lambda}^{\bt}:= \CC^{\bt}(X, j_*\sC^{\bt}_Y(\Lambda))$,    $A_{\Cx}^{\bt}:= \CC^{\bt}(X, j_*\sC^{\bt}_Y(N_c^{-1}\Omega_Y^{\bt}))$, and $A_F^{\bt}:= \CC^{\bt}(X, N_c^{-1}\Omega_X^{\bt}\<D\>)$, where $N_c^{-1}$ is the Dold--Kan denormalisation functor from cochain complexes to cosimplicial modules. The filtration $W$ is given by d\'ecalage of the good truncation filtration on $ j_*$ in each case. 
Then $N_cA$ is  mixed Hodge complex, and  on applying the Thom--Sullivan functor $\Th$ from cosimplicial DG algebras to DG algebra, we obtain a commutative algebra $\Th(A)$ in mixed Hodge complexes.   
\end{example}

\begin{definition}
For $\Lambda \subset \R$ a subfield, define $\MHS_{\Lambda}$ to be the category of mixed Hodge structures in finite-dimensional vector spaces over $\Lambda$, and write $\Pi(\MHS_{\Lambda})$ for the group scheme over $\Lambda$ corresponding to the forgetful functor from $\MHS_{\Lambda}$ to $\Lambda$-vector spaces. 

\end{definition}

\begin{definition}
 Given a cosimplicial vector space $V^{\bt}$ and a simplicial set $K$, define $(V^{\bt})^K$ to be the cosimplicial vector space given by $ ((V^{\bt})^K)^n= (V^n)^{K_n}$, with operations $\pd^i\co ((V^{\bt})^K)^n\to 
((V^{\bt})^K)^{n+1}$ defined by composing
\[
 (V^n)^{K_n} \xra{(\pd^i)^{K_n}} (V^{n+1})^{K_n}\xra{ (V^{n+1})^{\pd_i}}(V^{n+1})^{K_{n+1}}, 
\]
and operations $\sigma^i\co ((V^{\bt})^K)^n\to 
((V^{\bt})^K)^{n-1}$ defined  similarly.

In particular, $(V^{\bt})^{\Delta^1}$ is a path object over $V^{\bt}$, with the two vertices $ \Delta^0 \to \Delta^1$ inducing two maps $(V^{\bt})^{\Delta^1} \to V^{\bt}$.
 \end{definition}

\begin{definition}
The $\Lambda$-algebra $O(\MT_{\Lambda})$ admits both left and right multiplication by $\MT_{\Lambda}$. These induce two different ind-mixed Hodge structures on $O(\MT_{\Lambda})$, which we refer to as the left and right mixed Hodge structures  $(O(\MT_{\Lambda}),W^l, F_l),(O(\MT_{\Lambda}),W^r, F_r)$ .  
\end{definition}

\begin{example}\label{qprojMHS}
 Given $A^{\bt}_{\Lambda, \cH}(X,D):= ( A_{\Lambda}^{\bt}, \phi, A_{\Cx}^{\bt}, \psi, A_F^{\bt})$ as in Example \ref{qprojhodgecx}, we can  define $A^{\bt}_{\MHS}(X,D;\Lambda)$ to be the limit of the diagram
\[
\xymatrix@R=0ex{
 (W \ten W^l)_0 (A_{\Lambda}^{\bt}\ten_{\Lambda}O(\MT_{\Lambda})) \ar[r] & (W\ten W^l)_0(A_{\Cx}^{\bt}\ten_{\Lambda}O(\MT_{\Lambda})) \\
(W\ten W^l)_0((A_{\Cx}^{\bt})^{\Delta^1}\ten_{\Lambda}O(\MT_{\Lambda})) \ar[ur] \ar[r] & (W\ten W^l)_0(A_{\Cx}^{\bt}\ten_{\Lambda}O(\MT_{\Lambda}))\\
(W\ten W^l)_0(F\ten F_l)^0(A_F^{\bt}\ten_{\Lambda}O(\MT_{\Lambda})) \ar[ur],
}
\]
giving a cosimplicial algebra. The right Hodge structure on $O(\MT_{\Lambda})$ then gives us a cosimplicial algebra
\[
 (A^{\bt}_{\MHS}(X,D;\Lambda), W^r, F_r)
\]
in $\ind(\MHS_{\Lambda})$.
\end{example}

\begin{proposition}\label{qprojmhsworks}
 The $\Lambda$-Hodge complex associated to the cosimplicial algebra $A^{\bt}_{\MHS}(X,D;\Lambda)$ in $\ind(\MHS_{\Lambda})$ is canonically quasi-isomorphic to  $A^{\bt}_{\Lambda, \cH}(X,D)$ as a commutative algebra in cosimplicial $\Lambda$-Hodge complexes.
\end{proposition}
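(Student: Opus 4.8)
The plan is to make the realization functor from cosimplicial algebras in $\ind(\MHS_{\Lambda})$ to $\Lambda$-Hodge complexes explicit, and then to reduce the whole statement to a reconstruction property of the coordinate ring $O(\MT_{\Lambda})$. Recall that $\ind(\MHS_{\Lambda})$ is equivalent to the category of $\MT_{\Lambda}$-representations, so an object $M$ has three realizations: its underlying $\Lambda$-complex $M_{\Lambda}$ (the forgetful fibre functor), its complexification $M_{\Lambda}\ten_{\Lambda}\Cx$, and the Hodge-filtered complex $(M_{\Lambda}\ten_{\Lambda}\Cx, F_r)$, together with the tautological comparisons; these assemble into the associated $\Lambda$-Hodge complex. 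The content of the proposition is then that this functor, applied to $A^{\bt}_{\MHS}(X,D;\Lambda)$, returns $A^{\bt}_{\Lambda,\cH}(X,D)$ up to canonical quasi-isomorphism, compatibly with the cosimplicial commutative algebra structures.

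The crux is a reconstruction lemma for $O(\MT_{\Lambda})$: for any object $V$ of $\MHS_{\Lambda}$, the term $(W\ten W^l)_0(V\ten O(\MT_{\Lambda}))$, equipped with the right structure $(W^r,F_r)$, is canonically isomorphic to $V$ in $\ind(\MHS_{\Lambda})$, and its $\Lambda$-, $\Cx$- and Hodge-filtered realizations recover $V_{\Lambda}$, $V_{\Lambda}\ten_{\Lambda}\Cx$ and $(V_{\Lambda}\ten_{\Lambda}\Cx,F)$ naturally. This is the Tannakian torsor description of a mixed Hodge structure: $O(\MT_{\Lambda})$ with its two commuting actions is the universal object untwisting the weight filtration, so that the weight-zero truncation of $V\ten O(\MT_{\Lambda})$ splits $V$ while the residual right action encodes precisely the data of $V$ as an object of $\MHS_{\Lambda}$. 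I would prove this first on Tate objects $\Lambda(n)$, then on arbitrary pure objects, and finally extend to mixed $V$ using strictness of morphisms for the weight filtration, the whole argument being natural in $V$ and monoidal in $V$ because $O(\MT_{\Lambda})$ is a Hopf algebra.

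With the lemma in hand I would apply the realization functor to the limit diagram of Example \ref{qprojMHS}. Since $(-)\ten O(\MT_{\Lambda})$, the passage to $(W\ten W^l)_0$, and the limit are all compatible with the three realizations, the diagram realizes to the homotopy limit gluing $A_{\Lambda}^{\bt}\ten_{\Lambda}\Cx$, $A_{\Cx}^{\bt}$ (via the path object $(A_{\Cx}^{\bt})^{\Delta^1}$) and $A_F^{\bt}$ along the filtered quasi-isomorphisms $\phi$ and $\psi$. As $\phi$ and $\psi$ are $W$-filtered quasi-isomorphisms, this homotopy limit is $W$- and $F$-filtered quasi-isomorphic to the mixed Hodge complex $A^{\bt}_{\Lambda,\cH}(X,D)$, recovering each of its three components together with its comparisons. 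The commutative cosimplicial algebra structure is preserved throughout, since tensoring with the Hopf algebra $O(\MT_{\Lambda})$, the weight-zero truncation, the limit, and the Thom--Sullivan functor $\Th$ are all (lax) monoidal, and the reconstruction isomorphism of the previous paragraph is itself monoidal; naturality of that isomorphism gives canonicity.

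The main obstacle is controlling filtrations and strictness: I must check that $(W\ten W^l)_0$ is exact enough to commute with realization and to preserve the $W$-filtered quasi-isomorphisms $\phi$ and $\psi$, and that the limit in Example \ref{qprojMHS} genuinely computes a homotopy limit rather than a naive one. This hinges on the degeneration of the weight and Leray spectral sequences (condition (2) in the definition of a $\Lambda$-Hodge complex, and the $E_2$-degeneration already invoked for $E_f$ in \S\ref{formalweilapp}), which supplies the necessary strictness and makes the weight-zero truncation compatible with passage to cohomology. Granting these degeneration inputs, the remaining verifications are formal naturality and monoidality checks.
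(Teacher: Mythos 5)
Your proposal founders on two points, the first of which is that your central ``reconstruction lemma'' is false as stated. The single term $(W\ten W^l)_0(V\ten O(\MT_{\Lambda}))$, with its right structure $(W^r,F_r)$, is \emph{not} isomorphic to $V$: for $V$ pure of weight $w$ it equals $V\ten W^l_{-w}O(\MT_{\Lambda})$, which is already infinite-dimensional for $V=\Lambda$. What recovers $V$ is the \emph{whole} limit diagram of Example \ref{qprojMHS}, which cuts the rational weight-$0$ part against the $(W\ten W^l)_0(F\ten F_l)^0$ part through the path object; that diagram is a cocone computing $\oR\HHom_{\MHS,\Lambda}(\Lambda, V\ten (O(\MT_{\Lambda}),W^l,F_l))$, i.e.\ absolute Hodge cohomology. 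Recovering $V$ on the nose then requires not only the identification $\Hom_{\MHS,\Lambda}(\Lambda,V\ten O(\MT_{\Lambda}))\cong (V\ten O(\MT_{\Lambda}))^{\MT_{\Lambda}}\cong V$ but also the vanishing of the higher derived functors, i.e.\ injectivity of $V\ten O(\MT_{\Lambda})$ as an $\MT_{\Lambda}$-representation. That injectivity is the real content, and your proposed induction (Tate objects, then pure, then mixed via strictness) does not supply it; nor does the degeneration input you invoke at the end --- Leray $E_2$-degeneration is what feeds Proposition \ref{bettiformalprop} and \S\ref{formalweilapp}, not this proposition.

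Second, even with the lemma corrected, you cannot apply it to $A^{\bt}_{\Lambda,\cH}(X,D)$: that object is a cosimplicial $\Lambda$-\emph{Hodge complex}, whose individual terms are filtered complexes and not mixed Hodge structures, so ``applying the realization functor to the limit diagram'' and quoting a statement about objects of $\MHS_{\Lambda}$ has no purchase on it. The paper bridges exactly this gap with Beilinson's theorem (\cite[Theorem 3.4]{beilinson} in its bibliography): $N_cA^{\bt}_{\Lambda,\cH}(X,D)$ is quasi-isomorphic as a Hodge complex to an honest complex $V^{\bt}$ of mixed Hodge structures, and only then does the chain $N_cA^{\bt}_{\MHS}(X,D;\Lambda)\simeq \oR\Gamma_{\cH}(V^{\bt}\ten (O(\MT_{\Lambda}),W^l,F_l))\simeq \HHom_{\MHS,\Lambda}(\Lambda,V^{\bt}\ten O(\MT_{\Lambda}))\cong V^{\bt}$ go through, using injectivity once more. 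This step is indispensable and entirely absent from your outline. Finally, the proposition asks for a canonical comparison map of cosimplicial Hodge complexes, and the paper must verify that the projection-plus-counit map is (bi)filtered; this rests on the vanishing statements $W^l_{-n-1}W^r_nO(\MT_{\Lambda})=0$ and $F_l^{1-p}F_r^p(O(\MT_{\Lambda})\ten_{\Lambda}\Cx)=0$, which encode the interplay of the left and right filtrations and cannot be dismissed as ``formal naturality and monoidality checks.''
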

\begin{proof}
If we set 
\begin{align*}
 B_{\Lambda}&:= A_{\Lambda}^{\bt}\by_{A_{\Cx}^{\bt}}(A_{\Cx}^{\bt})^{\Delta^1}\\
B_{\Cx}&:=(A_{\Cx}^{\bt})^{\Delta^1}\\
B_F&:=(A_{\Cx}^{\bt})^{\Delta^1}\by_{A_{\Cx}^{\bt}}A_F^{\bt},
\end{align*}
then $B:= (B_{\Lambda}, B_{\Cx}, B_F)$ has the natural structure of a $\Lambda$-Hodge complex, and the morphism $\sigma^0 \co \Delta^1 \to \Delta^0$ induces a quasi-isomorphism $A^{\bt}_{\Lambda, \cH}(X,D) \to B$.

There is a map from the $\Lambda$-Hodge complex associated to $A^{\bt}_{\MHS}(X,D;\Lambda)$ to $B$ given by projections. We need to show that these projections preserve the Hodge and weight filtrations, and are (bi)filtered quasi-isomorphisms.

Now, for any mixed Hodge structure $V$ there is a canonical isomorphism $V\cong V\ten^{\MT_{\Lambda}}O(\MT_{\Lambda}):= (V\ten O(\MT_{\Lambda}))^{\MT_{\Lambda}}$, where the  Mumford--Tate action combines the action on $V$ with the left action on $O(\MT_{\Lambda})$. The mixed Hodge structure on $V$ then corresponds to the Mumford--Tate action on $(V\ten O(\MT_{\Lambda}))^{\MT_{\Lambda}}$ induced by the right action on $O(\MT_{\Lambda})$. 

Since $W_nV$ is a sub-MHS, it follows that $W_nV \cong V\ten^{\MT_{\Lambda}}W_n^rO(\MT_{\Lambda})$, and since $W_n$ is an idempotent functor, this is also isomorphic to $(W_nV) \ten^{\MT_{\Lambda}}W_n^rO(\MT_{\Lambda})$. In particular,
\[
 V\ten^{\MT_{\Lambda}}W_m^lW_n^rO(\MT_{\Lambda})\subset (W_nV) \ten^{\MT_{\Lambda}}W_m^lO(\MT_{\Lambda}) \subset \sum_{\substack{i \le n, j\le m,\\ i+j=0}} (W_iV) \ten (W_j^lO(\MT_{\Lambda})),  
\]
which is $0$ for $m+n<0$. Thus $W_{-n-1}^lW_n^rO(\MT_{\Lambda})=0$. A similar argument shows that $F^{1-p}_lF^p_r(\sO(\MT_{\Lambda})\ten_{\Lambda}\Cx)=0$.

Now, the weight filtration  $W_nA^{\bt}_{\MHS}(X,D;\Lambda)$ is given by replacing $O(\MT_{\Lambda})$ with $W_n^rO(\MT_{\Lambda})$ in the definition of $A^{\bt}_{\MHS}(X,D;\Lambda)$, and the Hodge filtration $F^pA^{\bt}_{\MHS}(X,D;\Lambda)$ by replacing $O(\MT_{\Lambda})\ten_{\Lambda}\Cx$ with $F^p_r(O(\MT_{\Lambda})\ten_{\Lambda}\Cx)$. Projection onto the first factor gives a map from $W_nA^{\bt}_{\MHS}(X,D;\Lambda)$ to $\sum_i  (W_iA_{\Lambda}^{\bt})\ten W_{-i}^lW^r_nO(\MT_{\Lambda})$, which by the vanishing above is contained in $(W_nA_{\Lambda}^{\bt})\ten O(\MT_{\Lambda})$. Using similar arguments for the other factors and composing with the co-unit $O(\MT_{\Lambda}) \to \Lambda$ gives compatible (bi)filtered morphisms
\begin{align*}
 A^{\bt}_{\MHS}(X,D;\Lambda)&\to B_{\Lambda}\\
 A^{\bt}_{\MHS}(X,D;\Lambda)\ten_{\Lambda}\Cx &\to B_{\Cx}\\
A^{\bt}_{\MHS}(X,D;\Lambda)\ten_{\Lambda}\Cx &\to B_F,
\end{align*}
and it only remains to establish quasi-isomorphism.

The data $N_cA^{\bt}_{\Lambda,\cH}(X,D)$  of Example \ref{qprojhodgecx} define a $\Lambda$-Hodge complex, so by \cite[Theorem 3.4]{beilinson}, there exists a complex $V^{\bt}$ of mixed Hodge structures whose associated Hodge complex is quasi-isomorphic to $N_cA^{\bt}_{\Lambda, \cH}(X,D)$.

Now, observe that $N_cA^{\bt}_{\MHS}(X,D;\Lambda)$ is a cocone calculating absolute Hodge cohomology, so 
\begin{align*}
 N_cA^{\bt}_{\MHS}(X,D;\Lambda) &\simeq \oR\Gamma_{\cH}(A^{\bt}_{\Lambda, \cH}(X,D)\ten (O(\MT_{\Lambda}), W^l, F_l) )\\
&\simeq \oR\Gamma_{\cH}(V^{\bt}\ten (O(\MT_{\Lambda}), W^l, F_l))\\
&\simeq \oR\HHom_{\MHS,\Lambda}(\Lambda, V^{\bt}\ten (O(\MT_{\Lambda}), W^l, F_l))\\
&\simeq \HHom_{\MHS,\Lambda}(\Lambda, V^{\bt}\ten (O(\MT_{\Lambda}), W^l, F_l))\\
&\cong V^{\bt}\ten^{\MT_{\Lambda}}O(\MT_{\Lambda})\\
&\cong V^{\bt},
\end{align*}
with the last two properties following because $V^{\bt}\ten O(\MT_{\Lambda})$ is an injective $\MT_{\Lambda}$-representation and because $\ind(\MHS_{\Lambda})$ is equivalent to the category of $O(\MT_{\Lambda})$-comodules in $\Lambda$-vector spaces. The quasi-isomorphisms above all respect mixed Hodge structures (via the right action on $O(\MT_{\Lambda})$), completing the proof.
\end{proof}

\subsection{Splittings for Betti cohomology}

Since $\MT_{\Lambda}$ is an affine group scheme, it is an inverse limit of linear algebraic groups, so  by \cite{Levi}, there exists a Levi decomposition $\MT_{\Lambda} \cong \PMT_{\Lambda}\ltimes \Ru(\MT_{\Lambda})$ of the universal Mumford--Tate group as the semidirect product of its pro-reductive quotient and its pro-unipotent radical. Beware that this decomposition is not canonical; it might be tempting to think that the functor $V \mapsto \gr^WV$ yields the required section by Tannaka duality, but it is not compatible with the fibre functors. 

Moreover, Levi decompositions are conjugate under the action of the radical $  \Ru(\MT_{\Lambda})$, so the set of decompositions is isomorphic to the quotient
\[
  \Ru(\MT_{\Lambda})/ \Ru(\MT_{\Lambda})^{\PMT_{\Lambda}}
\]
 by the centraliser of $ \PMT_{\Lambda}$. For any element $u$ of $\Ru(\MT_{\Lambda})$, we must have $(u-\id)W_nV \subset W_{n-1}V$ for all mixed Hodge structures $V$. However, any  element
in the centraliser necessarily has weight $0$ for the $\bG_m$-action, so must be $1$. Thus the set of Levi decompositions is a torsor under
\[
 \Ru(\MT_{\Lambda}).
\]

\begin{proposition}\label{bettiformalprop} 
 Each choice of Levi decomposition for the universal Mumford--Tate group $\MT_{\Lambda}$ gives rise to a zigzag of $W$-filtered  quasi-isomorphisms between the cosimplicial algebra-valued  functors
\begin{align*}
 (X,D) &\mapsto A^{\bt}_{\Lambda}(X,D)\\
(X,D) &\mapsto N_c^{-1}( \H^*(X, \oR^*j_*\Lambda), d_2), 
\end{align*}
where $d_2$ is the differential on the $E_2$ page of the Leray spectral sequence and $j \co X\backslash D \to X$.
\end{proposition}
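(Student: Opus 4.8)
The plan is to import the mixed Hodge structure already constructed in Proposition \ref{qprojmhsworks}, use the chosen Levi decomposition to split the weight filtration functorially, and then invoke a purity-implies-formality principle in the resulting semisimple Tannakian category.

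First I would reduce to the Hodge-theoretic model. By Proposition \ref{qprojmhsworks}, the cosimplicial algebra $A^{\bt}_{\Lambda}(X,D)$ is, through a canonical zigzag of $W$-filtered quasi-isomorphisms, the $\Lambda$-component of the $\Lambda$-Hodge complex underlying the cosimplicial commutative algebra $A^{\bt}_{\MHS}(X,D;\Lambda)$ in $\ind(\MHS_{\Lambda})$. This moves the entire problem into $\ind(\MHS_{\Lambda})=\ind\,\Rep(\MT_{\Lambda})$, and the point of doing so is that every construction below will be induced by fixed structure on $\MT_{\Lambda}$ and hence automatically natural in $(X,D)$.

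Next I would use the Levi decomposition $\MT_{\Lambda}\cong \PMT_{\Lambda}\ltimes \Ru(\MT_{\Lambda})$. Restriction along the section $\PMT_{\Lambda}\hookrightarrow \MT_{\Lambda}$ carries $\Rep(\MT_{\Lambda})$ into the semisimple category $\Rep(\PMT_{\Lambda})$ of pure Hodge structures, and composing with the weight cocharacter $\bG_{m}\to \PMT_{\Lambda}$ endows every (ind-)mixed Hodge structure with a $\bG_{m}$-action whose weight-$n$ eigenspace maps isomorphically onto $\gr^{W}_{n}$. This is precisely a functorial splitting of $W$: it turns $A^{\bt}_{\MHS}$ into a weight-graded cosimplicial commutative algebra in $\Rep(\PMT_{\Lambda})$ with differential of weight $0$, together with a natural $W$-filtered isomorphism of underlying complexes identifying it with $A^{\bt}_{\Lambda}(X,D)$ filtered by $W$.

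The crux is then an $E_{1}$-formality statement: a weight-graded cosimplicial commutative algebra in the semisimple $\Lambda$-linear (hence $\Char 0$) category $\Rep(\PMT_{\Lambda})$ is connected, through a natural multiplicative zigzag of $W$-filtered quasi-isomorphisms, to the $E_{1}$-page of its weight spectral sequence equipped with the induced $d_{1}$-differential. Semisimplicity forces all positive $\Ext$-groups to vanish, so the obstruction and indeterminacy spaces controlling formality are trivial, and the $\bG_{m}$-action furnished by the Levi decomposition is exactly what upgrades pointwise formality to a choice that is simultaneously natural in $(X,D)$ and compatible with the multiplication. Promoting the pointwise splitting to a genuinely functorial and multiplicative zigzag is the main obstacle, and is where the purity-implies-formality machinery (in the spirit of Deligne's weight argument, respectively of Cirici--Guill\'en) does the real work. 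Finally I would identify the target: since $W$ on $A^{\bt}_{\Lambda}(X,D)$ is the d\'ecalage $\Dec\tau$ of the good-truncation filtration on $\oR j_{*}\Lambda$, Deligne's d\'ecalage lemma identifies the $E_{1}$-page of the weight spectral sequence with the $E_{2}$-page of the Leray spectral sequence, namely $\H^{a}(X,\oR^{b} j_{*}\Lambda)$, and identifies $d_{1}$ with $d_{2}$; applying $N_{c}^{-1}$ lands the zigzag on $N_{c}^{-1}(\H^{*}(X,\oR^{*} j_{*}\Lambda),d_{2})$, with $d_{2}$ vanishing in the degenerate case.
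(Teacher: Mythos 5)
Your steps 1 and 2 are exactly the paper's opening moves (Proposition \ref{qprojmhsworks}, plus the observation that a Levi section, via the weight cocharacter, splits $W$ functorially and monoidally), and a d\'ecalage identification is indeed the right closing ingredient. The genuine gap is step 3. The filtration your Levi section splits is the weight filtration of the mixed Hodge structures, which at the level of complexes is the paper's $W=\Dec\tau$ of Example \ref{qprojhodgecx}; for \emph{that} filtration, Deligne's d\'ecalage isomorphisms give $E_1(W)\cong E_3(\mathrm{Leray})=\gr^W\H^*(X\setminus D,\Lambda)$ with $d_1=0$, \emph{not} the Leray $E_2$-page. So the target named in your steps 3--4, ``the $E_1$-page of the weight spectral sequence with $d_1$'', is the wrong page: connecting $A^{\bt}_{\Lambda}$ to it multiplicatively would assert genuine formality of smooth quasi-projective varieties --- a strictly stronger statement than the proposition (whose target carries the generally nonzero Gysin differential $d_2$) and one that is false in general. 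Worse, the mechanism you propose cannot prove any version of the claim: semisimplicity of $\Rep(\PMT_{\Lambda})$ has no bearing on formality of algebras in it, because formality obstructions are not $\Ext$-groups of the coefficient abelian category. Concretely, any non-formal cosimplicial commutative $\Q$-algebra, placed in weight $0$ as a trivial $\PMT_{\Lambda}$-representation, is a weight-graded algebra in this semisimple category which is not quasi-isomorphic to the $E_1$-page ($=$ its cohomology, with zero differential) of its weight spectral sequence; so the principle you state is false.

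What actually finishes the proof --- and is all the paper does --- needs no obstruction theory or formality machinery. The Levi section gives a monoidal natural isomorphism $V\cong\gr^WV$ on $\MHS_{\Lambda}$, hence, applied levelwise and using naturality against the cosimplicial and multiplication maps, an isomorphism of cosimplicial algebras $A^{\bt}_{\MHS}(X,D;\Lambda)\cong\gr^WA^{\bt}_{\MHS}(X,D;\Lambda)$ which is functorial in $(X,D)$ and multiplicative for free; the ``main obstacle'' you identify is a non-obstacle. One then applies $\gr^W$ to the zigzag of Proposition \ref{qprojmhsworks} to reach $\gr^WA^{\bt}_{\Lambda}(X,D)$, and concludes with the $E_0$-level form of the d\'ecalage lemma \cite[1.3.4]{Hodge2}: the complex $(\gr^{\Dec\tau}A^{\bt}_{\Lambda},d)$ maps quasi-isomorphically to the $E_1$-page of $\tau$ with its $d_1$, which after the standard truncation-versus-Leray reindexing is exactly $(\H^*(X,\oR^*j_*\Lambda),d_2)$. (If instead you read ``weight filtration'' as the unshifted $\tau$, then your step 4 identification becomes correct and step 3 becomes Cirici--Guill\'en's $E_1$-formality theorem; but that theorem is itself proved by this splitting-plus-d\'ecalage argument rather than by Ext-vanishing, and invoking it objectwise would in any case not supply the strict functoriality in $(X,D)$ that the proposition demands and that the $\MT_{\Lambda}$-equivariant model $A^{\bt}_{\MHS}$ was constructed to provide.)
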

\begin{proof}
 A choice of Levi decomposition is equivalent to a retraction of $\MHS_{\Lambda}$ onto $\HS_{\Lambda}$, and $V \in \MHS_{\Lambda}$ is canonically isomorphic to $\gr^WV$. Since the weight filtration is a functorial filtration by mixed Hodge substructures, it is necessarily preserved by any such retraction, which thus amounts to giving a functorial $W$-filtered isomorphism $V \cong \gr^WV$ for all mixed Hodge structures $V$.

Proposition \ref{qprojmhsworks} gives a zigzag of functorial $W$-filtered quasi-isomorphisms between the cosimplicial algebra  $A^{\bt}_{\MHS}(X,D;\Lambda)$ and the Betti complex $A^{\bt}_{\Lambda}(X,D)$. A choice of Levi decomposition then gives a $W$-filtered isomorphism $A^{\bt}_{\MHS}(X,D;\Lambda) \cong \gr^W A^{\bt}_{\MHS}(X,D;\Lambda)$. Applying Proposition \ref{qprojmhsworks} to the associated gradeds then gives a zigzag of filtered quasi-isomorphisms between $\gr^W A^{\bt}_{\MHS}(X,D;\Lambda) $ and $ \gr^WA^{\bt}_{\MHS}(X,D;\Lambda)$, which maps quasi-isomorphically to $N_c^{-1}( \H^*(X, \oR^*j_*\Lambda), d_2)$.
\end{proof}

\begin{corollary}\label{bettiformalcor}
If $E_B$ denotes the mixed Weil cohomology theory associated to Betti cohomology, and $E_{B,f}$ its formal analogue as in Examples \ref{mothomb} and \S \ref{formalweilapp}, then each choice of Levi decomposition for the universal Mumford--Tate group $\MT_{\Q}$ gives a zigzag of    quasi-isomorphisms between $E_B$ and $E_{B,f}$.
\end{corollary}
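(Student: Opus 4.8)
The plan is to read the statement off from Proposition \ref{bettiformalprop}, which already provides, for each Levi decomposition of $\MT_{\Q}$, a zigzag of $W$-filtered quasi-isomorphisms between the two cosimplicial-algebra-valued functors $(X,D)\mapsto A^{\bt}_{\Q}(X,D)$ and $(X,D)\mapsto N_c^{-1}(\H^*(X,\oR^*j_*\Q),d_2)$ on $\SmQP/F$. What remains is to match each of these endpoints with $E_B$, respectively $E_{B,f}$, after forgetting the weight filtration and transporting from cosimplicial algebras to objects of $\C_{\dg}(k)$.

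First I would identify the formal side. Specialising the definition of \S \ref{formalweilapp} to $E=E_B$, so that the coefficient sheaf is the constant sheaf $\Q$, gives $E_{B,f}(U\xra{j}X)=\bigoplus_{a,b}\H^a(X,\oR^bj_*\Q)$ equipped with the $d_2$-differential of the Leray spectral sequence. This is exactly $N_c$ applied to the right-hand endpoint $N_c^{-1}(\H^*(X,\oR^*j_*\Q),d_2)$ of Proposition \ref{bettiformalprop} (using $N_c N_c^{-1}\cong\id$), so the two agree functorially in $(U,X)$.

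Next I would identify the Betti side. On a pair $(U,X)$ with $U=X\setminus D$, the realisation $E_B$ computes $\oR\Gamma(U(\Cx),\Q)$, and Example \ref{qprojhodgecx} furnishes the explicit cosimplicial model $A^{\bt}_{\Q}(X,D)=\CC^{\bt}(X,j_*\sC^{\bt}_Y(\Q))$ by global sections of the Godement resolution of $j_*\Q$. Applying the normalisation $N_c$ (or the Thom--Sullivan functor $\Th$, if one wishes to retain the algebra structure relevant to Example \ref{mothomb}), both of which preserve quasi-isomorphisms, turns this into an object of $\C_{\dg}(k)$ computing $\oR\Gamma(U,\Q)$. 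One must check that this model agrees, naturally in $(U,X)$, with the value of the mixed Weil theory $E_B=\HHom_{\cM}(-,\sE)$; since both compute the same Betti realisation compatibly with pullbacks along morphisms in $\SmQP/F$, they are linked by a canonical zigzag of quasi-isomorphisms of functors.

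Finally, applying $N_c$ (or $\Th$) to the zigzag of Proposition \ref{bettiformalprop} and discarding the $W$-filtration produces a zigzag of quasi-isomorphisms of $\C_{\dg}(k)$-valued functors between the $A^{\bt}_{\Q}$-model and the $(\H^*,d_2)$-model; composing with the two identifications above yields the asserted zigzag between $E_B$ and $E_{B,f}$, one for each Levi decomposition. The principal obstacle is the identification in the preceding paragraph: reconciling the concrete Godement/Hodge-complex description with the abstract presheaf-theoretic functor $E_B$ on $\cN^{\eff'}_{\dg}$ (and, via Corollary \ref{QPcor}, with its incarnation on $\cM^{\eff'}_{\dg}$), together with verifying that $\Th\circ N_c$ respects both the quasi-isomorphisms and the monoidal structure needed for the whole zigzag to descend to the dg functors $E_B$ and $E_{B,f}$.
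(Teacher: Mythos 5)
Your proposal is correct and follows essentially the same route as the paper: apply Proposition \ref{bettiformalprop}, then identify its two endpoints with $E_B$ and $E_{B,f}$ after passing from cosimplicial algebras to complexes (the paper applies $\Th$ throughout and uses that $\Th$ and $N_c^{-1}$ are homotopy inverses, rather than your $N_c N_c^{-1}\cong\id$, but this is immaterial). The one point you flag as the ``principal obstacle'' --- reconciling the Godement/Hodge-complex model with the abstract presheaf-theoretic functor $\HHom_{\cM}(-,\sE)$ --- is dispatched in the paper by fiat: it takes $E_B(X)=\Th(\CC^{\bt}(X(\Cx),\Q))$ as the working model of the Betti theory, so the identification on the Betti side is just the canonical restriction quasi-isomorphism $\Th(A^{\bt}_{\Q}(X,D))\to E_B(X\setminus D)$, and no abstract comparison of realisation functors is needed.
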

\begin{proof}
 The functor $E_B$ is given by $X \mapsto \Th(\CC^{\bt}(X(\Cx),\Q))$, so there is a canonical quasi-isomorphism from $\Th(A^{\bt}_{\Lambda}(X(\Cx),D(\Cx))) $ to $E_B(X)$.  Proposition \ref{bettiformalprop} thus gives a zigzag of quasi-isomorphisms from $E_B(X)$ to $\Th N_c^{-1}(\H^*(X(\Cx), \oR^*j_*\Lambda), d_2)$, functorial in $(X\backslash D \xra{j} X)$ in $\SmQP/F$. The functors $\Th$ and $N_c^{-1}$ are homotopy inverses, so this is quasi-isomorphic to $(\H^*(X(\Cx), \oR^*j_*\Lambda), d_2)$, which is just $E_{B,f}(X\backslash D \xra{j} X)$.
\end{proof}

\bibliographystyle{alphanum}
\bibliography{references.bib}
\end{document}